\documentclass[a4paper,reqno,12pt]{article}

\usepackage{amsmath,amssymb,amsthm,mathtools,bm}
\usepackage{geometry}
\usepackage{microtype}
\usepackage{enumitem}
\usepackage{booktabs}
\usepackage{array}
\usepackage{aliascnt}
\usepackage{fancyhdr}
\usepackage[numbers,sort&compress]{natbib}

\usepackage[hidelinks]{hyperref}
\usepackage[nameinlink,noabbrev]{cleveref}
\allowdisplaybreaks[3]
\numberwithin{equation}{section}
\setlist{itemsep=0.2em,topsep=0.35em}

\makeatletter
\renewcommand\section{\@startsection{section}{1}{\z@}%
  {-2.2ex \@plus -0.5ex \@minus -0.2ex}%
  {1.1ex \@plus 0.2ex}%
  {\normalfont\large\bfseries\raggedright}}
\makeatother

\newtheorem{theorem}{Theorem}[section]
\newaliascnt{lemma}{theorem}
\newtheorem{lemma}[lemma]{Lemma}
\aliascntresetthe{lemma}
\newaliascnt{proposition}{theorem}
\newtheorem{proposition}[proposition]{Proposition}
\aliascntresetthe{proposition}
\newaliascnt{corollary}{theorem}

\aliascntresetthe{corollary}
\theoremstyle{remark}
\newaliascnt{remark}{theorem}
\newtheorem{remark}[remark]{Remark}
\aliascntresetthe{remark}
\crefname{theorem}{theorem}{theorems}
\Crefname{theorem}{Theorem}{Theorems}
\crefname{lemma}{lemma}{lemmas}
\Crefname{lemma}{Lemma}{Lemmas}
\crefname{proposition}{proposition}{propositions}
\Crefname{proposition}{Proposition}{Propositions}
\crefname{corollary}{corollary}{corollaries}
\Crefname{corollary}{Corollary}{Corollaries}
\crefname{remark}{remark}{remarks}
\Crefname{remark}{Remark}{Remarks}
\crefname{equation}{equation}{equations}
\Crefname{equation}{Equation}{Equations}
\crefname{section}{section}{sections}
\Crefname{section}{Section}{Sections}

\newcommand{\dd}{\,\mathrm d}
\newcommand{\norm}[1]{\left\lVert #1\right\rVert}

\hypersetup{
  pdftitle={ Large-Time Behavior for One-Dimensional Planar Compressible Magnetohydrodynamic Equations on Unbounded Domains},
  pdfauthor={Jing Li; Ziyin Liu},
  pdfkeywords={Planar compressible magnetohydrodynamic equations, unbounded domains, Cauchy problem, Neumann boundary conditions, Dirichlet boundary conditions, uniform-in-time estimates, large-time behavior}
}

\title{\bfseries  Large-Time Behavior
for One-Dimensional Planar Compressible Magnetohydrodynamic Equations
on Unbounded Domains\thanks{This work is partially supported by the National Natural Science Foundation of
China (12471227, 12494541), and the Double-Thousand Plan of Jiangxi Province (jxsq2023201115).}}
\author{Jing Li$^a$; Ziyin Liu$^b$
\thanks{a. Institute of Applied Mathematics \& State Key Laboratory of Mathematical Sciences, AMSS,\\ \& Hua Loo-Keng Key Laboratory of Mathematics,
Chinese Academy of Sciences,\\ Beijing, 100190 People's Republic of China, \\ and School of Mathematics and Computer Sciences, \\ Institute of Mathematics and Interdisciplinary Sciences,\\ Nanchang
University, Nanchang, 330031, People's Republic of China (ajingli@gmail.com).\\
b. Institute of Applied Mathematics, Academy of Mathematics and Systems Science,\\
Chinese Academy of Sciences, Beijing 100190, People's Republic of China\\
(a1909462594@gmail.com).}}
\date{}

\begin{document}
\maketitle

\begin{abstract} 

Global existence and the large-time behavior of strong solutions to the
one-dimensional planar compressible magnetohydrodynamic system in unbounded
domains are investigated.
The longitudinal  viscosity is assumed to be a positive constant plus  any nonnegative power  of the density, or to depend on temperature through a power law. The heat conductivity is taken to be proportional to any nonnegative power of temperature, while the transverse viscosity and magnetic diffusivity
are taken to be positive constants. Global existence, uniform-in-time estimates, and convergence to equilibrium are established  without imposing any smallness conditions on the initial data. In the density-dependent case, any fixed
nonnegative viscosity exponent is allowed; in the temperature-dependent
case, the nonnegative viscosity exponent is required to be sufficiently
small in terms of the initial data. An auxiliary elliptic equation is
introduced to control magnetic-pressure oscillations through a
square-root-in-time estimate. Combined with the estimates appropriate to
each viscosity law, this yields uniform upper and positive lower bounds
for the specific volume and temperature and asymptotic stability of the
global strong solution.
\end{abstract}

\medskip
\noindent\textbf{Keywords:}
Planar compressible magnetohydrodynamic equations; unbounded domains; 
uniform-in-time estimates; large-time behavior.

\smallskip
\noindent\textbf{Mathematics Subject Classification:}
35Q35, 35B40, 35B65, 76N10.

\section{Introduction}\label{sec:intro}

In Lagrangian coordinates, the one-dimensional equations of planar
compressible magnetohydrodynamics (MHD) take the form \cite{Cabannes:1970,JeffreyTaniuti1964,LandauLifshitz1984}
\begin{align}
 v_t&=u_x, \label{eq:mass}\\
 u_t+\left(P+\frac12|b|^2\right)_x
 &=\left(\frac{\mu u_x}{v}\right)_x, \label{eq:mom}\\
 w_t-b_x&=\left(\frac{\lambda w_x}{v}\right)_x,
                                                        \label{eq:trans}\\
 (vb)_t-w_x&=\left(\frac{\nu b_x}{v}\right)_x, \label{eq:ind} 
            \end{align} 
            \begin{align}                                        
 &\left(e+\frac{u^2+|w|^2+v|b|^2}{2}\right)_t
+\left\{u\left(P+\frac12|b|^2\right)
 -w\cdot b\right\}_x\notag\\
 &=\left\{\frac{\kappa(\theta)\theta_x}{v}
 +\frac{\mu uu_x}{v}
 +\frac{\lambda w\cdot w_x}{v}
 +\frac{\nu b\cdot b_x}{v}\right\}_x,                  \label{eq:energy}
\end{align}
for $x\in\Omega$ and $t>0$, where $\Omega$ is either the whole line
$\mathbb R$ or the half-line $\mathbb R_+=(0,\infty)$.
Here $v$, $u$, $\theta$, $e$, and $P$ denote the specific volume,
longitudinal velocity, absolute temperature, specific internal energy,
and pressure, respectively, while $w,b\in\mathbb R^2$ denote the
transverse velocity and magnetic field, respectively.
We consider an ideal gas with
\begin{equation}
 P=\frac{R\theta}{v},\qquad
 e=c_v\theta+\mathrm{const},                            \label{eq:gas}
\end{equation}
where $R>0$ is the gas constant and $c_v>0$ is the specific heat
at constant volume. The transverse shear viscosity coefficient
$\lambda>0$ and the magnetic diffusivity $\nu>0$ are constants.
The heat conductivity is given by
\begin{equation}
 \kappa(\theta)=\widetilde\kappa\theta^\beta,\qquad
 \widetilde\kappa>0,\quad \beta\geq0.                  \label{eq:coeff}
\end{equation}
The longitudinal viscosity is given by either of the following two alternative laws:
\begin{equation}\label{eq:viscosity}
\begin{aligned}
\eqref{eq:viscosity}_1\quad&\mu=\mu(v)=\mu_1+\mu_2v^{-\alpha},
\qquad \alpha\geq0,\\
\eqref{eq:viscosity}_2\quad&\mu=\mu(\theta)=\mu_0\theta^\gamma,
\qquad \gamma\geq0.
\end{aligned}
\end{equation}
where $\mu_0,\mu_1,\mu_2$ are positive constants.
In $\eqref{eq:viscosity}_1$, the exponent $\alpha$ is arbitrary but fixed,
and no smallness condition is imposed on $\frac{\mu_2}{\mu_1}$.
In $\eqref{eq:viscosity}_2$, $\gamma$ is required to be sufficiently small,
with a threshold depending on the initial data. In both cases,
$\beta\geq0$ is arbitrary but fixed.

We prescribe the initial data
\begin{equation}
 (v,u,\theta,b,w)(x,0)
 =(v_0,u_0,\theta_0,b_0,w_0)(x),\qquad x\in\Omega.
                                                        \label{eq:data}
\end{equation}
We consider the following three problems.
For the Cauchy problem, $\Omega=\mathbb R$, and the far-field condition is
\begin{equation}
 (v,u,\theta,b,w)(x,t)\longrightarrow(1,0,1,0,0)
 \quad (x\to\pm\infty).                               \label{eq:bc-C}
\end{equation}
The two half-line problems are posed on $\Omega=\mathbb R_+$.
For homogeneous Neumann conditions on the magnetic field, we impose
\begin{equation}
 \begin{gathered}
 u(0,t)=w(0,t)=0,\qquad \theta_x(0,t)=b_x(0,t)=0,\\ 
 (v,u,\theta,b,w)(x,t)\longrightarrow(1,0,1,0,0)
 \quad (x\to+\infty).
 \end{gathered}                                         \label{eq:bc-N}
\end{equation}
For homogeneous Dirichlet conditions on the magnetic field, we impose
\begin{equation}
 \begin{gathered}
 u(0,t)=w(0,t)=b(0,t)=0,\qquad \theta_x(0,t)=0,\\
 (v,u,\theta,b,w)(x,t)\longrightarrow(1,0,1,0,0)
 \quad (x\to+\infty).
 \end{gathered}                                         \label{eq:bc-D}
\end{equation}
Thus, the boundary at $x=0$ is thermally insulated in both half-line
problems. We assume that the initial data satisfy
\begin{equation}
 (v_0-1,u_0,\theta_0-1,b_0,w_0)\in H^1(\Omega),
 \qquad
 \inf_\Omega v_0>0,\quad \inf_\Omega\theta_0>0,       \label{eq:init}
\end{equation}
and, in the half-line cases, satisfy $u_0(0)=w_0(0)=0$ and,
for \eqref{eq:bc-D}, $b_0(0)=0$. The Neumann conditions are
imposed on the solution for positive times; no initial derivative
trace is required at the $H^1$ level. The initial perturbation may be
arbitrarily large in $H^1(\Omega)$.

Kazhikhov \cite{Kazhikhov1987} obtained a priori estimates for a
one-dimensional MHD model with constant transport coefficients and
large initial data in a bounded interval. Chen-Wang \cite{ChenWang2002} studied a free-boundary
problem with general constitutive laws and transport coefficients,
while Wang \cite{Wang2003} considered the initial-boundary value problem
on a fixed bounded interval. For temperature-dependent heat
conductivity, Hu-Ju \cite{HuJu2015} and Huang-Shi-Sun
\cite{HuangShiSun2019} proved global existence of strong solutions for
$\beta>0$. For constant longitudinal and transverse viscosities and
constant heat conductivity, Li-Shang
\cite{LiShang2019} proved global existence for a class of
magnetic diffusivities depending on the specific volume.
For constant viscosities and magnetic diffusivity, Huang-Shi-Sun
\cite{HuangShiSun2021} proved uniform-in-time positive lower and upper
bounds for the specific volume and temperature, as well as exponential
convergence to equilibrium, for every $\beta\geq0$. Their result
concerns a bounded interval with homogeneous Dirichlet conditions for
the velocities and magnetic field and thermally insulated boundaries. 
On bounded intervals, Song--Zhao \cite{SongZhao2025} considered
viscosities and magnetic diffusivity proportional to $\theta^\alpha$,
with heat conductivity proportional to $\theta^\beta$, $\beta>0$.
For large $H^2$ initial data and sufficiently small $\alpha$, they
proved global existence and exponential convergence to equilibrium.
For $\eqref{eq:viscosity}_2$, only the longitudinal viscosity depends on
the temperature, while the magnetic diffusivity is an arbitrary
fixed positive constant.

For the density-dependent viscosity $\eqref{eq:viscosity}_1$,
Cao-Peng-Sun \cite{CaoPengSun2021} proved
global existence of strong solutions to the Cauchy problem and the
half-line problem with Dirichlet conditions for the magnetic field
for $\alpha\geq0$ and $\beta>0$. L\"u-Shi-Xiong \cite{LuShiXiong2021} treated
these problems with constant viscosities and heat conductivity.
For the half-line problem with Neumann conditions on both the
magnetic field and the temperature, Tong-Wang-Zhang
\cite{TongWangZhang2026} proved global existence of strong solutions
for $\alpha=0$ and $\beta\geq0$.
In each of these MHD existence results, the specific volume and
temperature are bounded above and away from zero on every finite
time interval, with constants that may depend on the final time.

 Concerning the large-time behavior of strong solutions on unbounded domains, 
for the compressible Navier--Stokes system with constant viscosity and heat
conductivity on unbounded domains, Jiang \cite{Jiang1999,Jiang2002}  obtained positive lower and upper bounds
for the specific volume uniform in both space and time.
Li-Liang \cite{LiLiang2016} subsequently proved
uniform-in-time estimates and convergence to equilibrium for large
initial data. Li-Xu
\cite{LiXu2026} obtained similar results when the viscosity is constant
and the heat conductivity is given by
$\kappa(\theta)=\widetilde\kappa\theta^\beta$ with $\beta>0$. Results on the large-time behavior for the MHD Cauchy problem are also available under alternative assumptions on the transport coefficients.
Shang \cite{Shang2022} considered
viscosities and heat conductivity with a common factor $h(v)\theta^a$,
subject to growth conditions on $h$.
For sufficiently small $|a|$ and sufficiently large magnetic
diffusivity, Shang established global existence and large-time
stability of smooth solutions.
More recently, Li-Liao-Wan \cite{LiLiaoWan2026}
studied the case of constant longitudinal viscosity and heat
conductivity of the form
\[
 \kappa(v,\theta)=\kappa_1+\kappa_2v\theta^{\mathfrak b},
 \qquad \kappa_1,\kappa_2>0,\quad \mathfrak b>\frac32.
\]
They proved that the specific volume and temperature remain bounded
above and away from zero uniformly in time and that the solutions
converge to equilibrium. The authors also noted that these results
extend to the half-line problem with Dirichlet conditions for
the magnetic field, including the case of thermal insulation at
the boundary.

We begin with the local existence result needed for all the coefficient
and boundary conditions considered here. A proof at the $H^1$ level
is given in Appendix~\ref{sec:local}.
\begin{lemma}\label{lem:01a}
Let $\beta\geq0$ be fixed.
For $\eqref{eq:viscosity}_1$, let $\alpha\geq0$ be fixed; for
$\eqref{eq:viscosity}_2$, let $0\leq\gamma\leq1$. Assume \eqref{eq:init} and, on the half-line,
the initial Dirichlet conditions stated above.
For each of the three conditions \eqref{eq:bc-C}--\eqref{eq:bc-D},
the corresponding problem \eqref{eq:mass}--\eqref{eq:data},
with exactly one of the two viscosity laws in \eqref{eq:viscosity},
has a unique local strong solution
on $[0,T_0]$ for some $T_0>0$, with
\begin{equation}\label{eq:finite}
\begin{cases}
 v-1,u,\theta-1,b,w\in C([0,T_0];H^1(\Omega)),\\
 v_t\in C([0,T_0];L^2(\Omega))\cap L^2(0,T_0;H^1(\Omega)),\\
 u_t,w_t,b_t,\theta_t,u_{xx},w_{xx},b_{xx},\theta_{xx}
 \in L^2(\Omega\times(0,T_0)).
\end{cases}
\end{equation}
Moreover, $v$ and $\theta$ remain bounded above and away from zero.
The solution can be continued as long as
$\|(v-1,u,\theta-1,w,b)\|_{H^1(\Omega)}$ stays bounded and
$v$ and $\theta$ stay bounded away from zero.
The local existence time depends only on these initial bounds,
the fixed coefficients, $\beta$, and the problem under consideration.
In case $\eqref{eq:viscosity}_1$, it may also depend on the fixed $\alpha$;
in case $\eqref{eq:viscosity}_2$, it can be chosen uniformly for
$\gamma\in[0,1]$. The Neumann conditions hold for almost every
positive time in the trace sense; no initial derivative trace is required.
\end{lemma}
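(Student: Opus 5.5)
We will prove the local existence by a contraction mapping argument in a space built on the regularity class \eqref{eq:finite}, treating \eqref{eq:mass} as an ODE for $v$ and \eqref{eq:mom}--\eqref{eq:ind} and the temperature equation as linear parabolic problems with frozen coefficients. First we rewrite \eqref{eq:energy}, using \eqref{eq:mom}--\eqref{eq:ind}, in the internal-energy form
\[
 c_v\theta_t+\frac{R\theta}{v}u_x=\Big(\frac{\kappa(\theta)\theta_x}{v}\Big)_x+\frac{\mu u_x^2}{v}+\frac{\lambda|w_x|^2}{v}+\frac{\nu|b_x|^2}{v},
\]
and \eqref{eq:ind} as $vb_t+bu_x-w_x=(\nu b_x/v)_x$. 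Since $v_0\in H^1\subset L^\infty$ with $\inf v_0>0$ and likewise for $\theta_0$, there are constants $0<m\le M$ with $m\le v_0,\theta_0\le M$. For $T>0$ we define $X_T$ as the set of $(\bar u,\bar\theta,\bar b,\bar w)$ with $(\bar u,\bar\theta-1,\bar b,\bar w)\in C([0,T];H^1)\cap L^2(0,T;H^2)$ and time derivatives in $L^2(\Omega\times(0,T))$. On $X_T$ we impose the data \eqref{eq:data}, the Dirichlet traces $\bar u(0,t)=\bar w(0,t)=0$ (and $\bar b(0,t)=0$ for \eqref{eq:bc-D}) on the half-line, a bound $E$ on the norm, and $\bar\theta\ge m/2$. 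Given such a tuple, we set $v=v_0+\int_0^t\bar u_x\,ds$. Then $\|v-v_0\|_{L^\infty}\le C\sqrt T\,E$, so for $T$ small $v\in[m/2,2M]$, and $v-1\in C([0,T];H^1)$ with $v_t\in L^2(0,T;H^1)$.

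Next we solve the linearized problems. The coefficients are $\mu(v)$ or $\mu(\bar\theta)$, $\lambda/v$, $\nu/v$ and $\kappa(\bar\theta)/v$, each frozen at the given tuple, and the lower-order and source terms are also evaluated at the given tuple. The Neumann conditions $\theta_x=0$ and, for \eqref{eq:bc-N}, $b_x=0$ at $x=0$ are used as the natural boundary conditions of the weak formulation, so no compatibility condition on $\theta_{0x}(0)$ or $b_{0x}(0)$ is needed. In each case the frozen coefficient $a(x,t)$ lies in $H^1$ in $x$, is bounded above and below, and has $a_x\in L^\infty(0,T;L^2)$. The standard energy estimates then work: we test with $\phi_t$ and $-\phi_{xx}$ and control the term $a_x\phi_x$ by $\|a_x\|_{L^2}\|\phi_x\|_{L^\infty}\le\|a_x\|\,\|\phi_x\|^{1/2}\|\phi_{xx}\|^{1/2}$ together with Young's inequality. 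This gives unique solutions in $X_T$ with bounds depending only on $m$, $M$ and the $H^1$ data. The nonlinear sources are handled as follows:
\begin{itemize}
\item the products $\bar u_x^2$, $|\bar w_x|^2$ and $|\bar b_x|^2$ lie in $L^2(0,T;L^2)$ with norm at most $CT^{1/4}E^2$;
\item the pressure gradient $(R\bar\theta/v)_x$ and the term $(|\bar b|^2)_x$ are of order $\sqrt T$.
\end{itemize}
Hence the solution map sends the ball into itself for small $T$. The lower bound $\theta\ge m/2$ follows from $\|\theta-\theta_0\|_{L^\infty}\le C\|\theta-\theta_0\|^{1/2}\|(\theta-\theta_0)_x\|^{1/2}$ together with $\|\theta-\theta_0\|_{L^2}\le\sqrt T\|\theta_t\|_{L^2_{t,x}}$.

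\textbf{Main obstacle.} The hard step is the contraction, which we measure in the lower norm $C([0,T];L^2)\cap L^2(0,T;H^1)$, and in particular the differences of the quasilinear coefficients $\kappa(\bar\theta)$ and $\mu(\bar\theta)$. For $\theta^\beta$ with $\beta\ge0$ these are locally Lipschitz on $[m/2,\infty)$, so their differences are controlled by $|\delta\bar\theta|$. However, they multiply $\theta_{x}$ or $u_x$, and these factors are only in $L^2_tL^\infty_x$. This is why we include the $L^2$ difference of the $v$'s in the metric and use the interpolation $\|\delta\|_{L^\infty}\le\|\delta\|^{1/2}\|\delta_x\|^{1/2}$. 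For \eqref{eq:viscosity}$_2$ we must also track that all constants depend on $\gamma$ only through $\sup_{\gamma\in[0,1]}$ of $\theta^{\gamma}$ and its derivative on $[m/2,2M]$, which gives a $T_0$ uniform in $\gamma$; for \eqref{eq:viscosity}$_1$ the $\alpha$-dependence enters through $v^{-\alpha}$ on $[m/2,2M]$.

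Finally, we address uniqueness, the time-continuity of the solution, and its continuation. Uniqueness follows from the same difference estimate. The continuity $C([0,T];H^1)$ follows from the Lions--Magenes lemma, and the trace sense of the Neumann conditions for a.e.\ $t>0$ follows from $\theta,b\in L^2(0,T;H^2)$. Because $T_0$ depends only on $m$, $M$ and the $H^1$ norm, the continuation criterion follows by restarting the construction at any time where these quantities remain bounded.
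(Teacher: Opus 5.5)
Your argument is correct in outline, but it follows a different route from the paper. The paper does not linearize. It applies a Galerkin scheme directly to the quasilinear system: eigenfunctions of the Dirichlet or Neumann Laplacian on bounded intervals exhausting $\Omega$, with $v^n=v_0^n+\int_0^t u^n_x\,ds$. From the tests with $Z$ and $-Z_{xx}$ and the product inequality \eqref{eq:local-product} it derives the single nonlinear bound \eqref{eq:local-est}. It then passes to the limit by local compactness, where the products $v^n_xZ^n_x$ converge only in distributions. Uniqueness and continuous dependence are proved separately, from the $H^1$-level difference inequality \eqref{eq:local-difference}.

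Your frozen-coefficient iteration, with a contraction in $C([0,T];L^2)\cap L^2(0,T;H^1)$, has different strengths and costs.
\begin{itemize}
\item It gives existence and uniqueness from one difference estimate and needs only local Lipschitz dependence of the coefficients.
\item It avoids the compactness step and the weak--strong product limits on an unbounded domain.
\item The price is that you must supply the linear parabolic theory for rough coefficients ($a_x\in L^\infty_tL^2_x$, with no useful control of $a_t$ for $\kappa(\bar\theta)/v$). In practice that theory is itself proved by a Galerkin scheme using the same $-\phi_{xx}$ estimate.
\item Your continuous dependence comes only in the low norm. The paper's \eqref{eq:local-difference} gives convergence in $C_tH^1$ and $L^2_tH^2$, which the paper later uses to justify its energy identities by approximation with smooth data.
\end{itemize}

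Two details should be made explicit.

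First, in the contraction keep the magnetic equation in the weighted divergence form $vb_t=(\nu b_x/v)_x+\dots$, as you wrote it. Expanding $\frac1v\bigl(\frac{b_x}{v}\bigr)_x$ would produce $\delta v_x=\int_0^t\delta\bar u_{xx}\,ds$, which your metric does not control. In the weighted form only $\delta v\,b^{(2)}_t$ appears. Since $\|\delta v(t)\|_{L^2}\le\sqrt t\,\|\delta\bar u_x\|_{L^2_{t,x}}$, no separate $v$-component of the metric is needed, despite your remark about "including" it.

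Second, impose an upper bound such as $\bar\theta\le 2M$ in $X_T$, in addition to $\bar\theta\ge m/2$. Then the linear constants for $\kappa(\bar\theta)/v$ and $\mu(\bar\theta)$ depend only on $m,M$ and not on the ball radius $E$. This keeps the choice of $E$ non-circular, and it is also what makes your uniformity-in-$\gamma$ remark on $[m/2,2M]$ legitimate.
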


For these problems, the main question is whether the bounds can be
made independent of time and whether large solutions converge to
equilibrium. Classical small-data results are available; see
\cite{KawashimaOkada1982,LiuZeng1997,QinLiuWang2015} and the
references therein. The finite-time estimates in
\cite{CaoPengSun2021,LuShiXiong2021,TongWangZhang2026} do not
yield the large-data convergence considered here.

In this paper, we establish global existence, uniform-in-time
estimates, and convergence to equilibrium for all three problems,
without any smallness condition on the initial perturbation.
The result holds for every fixed $\beta\geq0$, with arbitrary fixed
$\alpha\geq0$ in $\eqref{eq:viscosity}_1$ and sufficiently small
$\gamma\geq0$ in $\eqref{eq:viscosity}_2$. It includes constant viscosity
and constant heat conductivity.

Our main result is as follows.

\begin{theorem}\label{thm:main} Under the assumptions of Lemma~\ref{lem:01a}, the following assertions hold.
For $\eqref{eq:viscosity}_1$, let $\alpha\geq0$ be arbitrary but fixed.
For $\eqref{eq:viscosity}_2$, there exists $\gamma_0\in(0,1]$, depending only
on the fixed coefficients, $\beta$, the problem under consideration,
the initial $H^1$ norm and the initial positive lower bounds,
such that the assertions hold whenever $0\leq\gamma\leq\gamma_0$.
In either case, there exists a unique
global strong solution satisfying \eqref{eq:finite} with $T_0$
replaced by any $T>0$. Moreover, it satisfies
\begin{equation}
 0<c\leq v(x,t),\theta(x,t)\leq C,
 \qquad x\in\Omega,\quad t\geq0,                        \label{eq:mainvt}
\end{equation}
\begin{equation}
 \sup_{t\geq0} 
 \norm{ (v -1,u ,\theta -1,w ,b)( \cdot,t) }_{H^1(\Omega)}^2 
 \leq C,                                      \label{eq:mainH1}
\end{equation}
and
\begin{align}
 \int_0^\infty \bigl\{ \norm{ v_x (\cdot,t)}_{L^2(\Omega)}^2 + \norm{ ( u_x ,\theta_x, w_x ,b_x)( \cdot,t) }_{H^1(\Omega)}^2+\norm{ ( u_t ,\theta_t, w_t ,b_t)( \cdot,t) }_{L^2(\Omega)}^2 \bigr\}
 \dd t\leq C.                                  \label{eq:mainds}
\end{align}
Moreover, for every $2<p\leq\infty$,
\begin{equation}
\norm{ (v -1,u ,\theta -1,w ,b)( \cdot,t) }_{L^p(\Omega)}+ \norm{(v_x,u_x,\theta_x,w_x,b_x)(\cdot,t)}_{L^2(\Omega)} 
 \longrightarrow0\quad (t\to\infty).                 \label{eq:maingr}
\end{equation}
Here $c$ and $C$ may depend on the fixed coefficients, $\beta$,
the problem under consideration, and the initial data, but are
independent of time. In case $\eqref{eq:viscosity}_1$, they may also depend
on the fixed $\alpha$; in case $\eqref{eq:viscosity}_2$, they are uniform
for $\gamma\in[0,\gamma_0]$. The convergence is asserted for each
fixed admissible exponent $\alpha$ or $\gamma$, respectively.
\end{theorem}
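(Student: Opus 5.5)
The plan is the standard continuation argument. By Lemma~\ref{lem:01a}, it suffices to derive a priori estimates on an arbitrary interval $[0,T]$ whose constants are independent of $T$. In order, these are: uniform upper and positive lower bounds for $v$ and $\theta$, the $H^1$ bound \eqref{eq:mainH1}, and the dissipation bound \eqref{eq:mainds}. Global existence then follows, and \eqref{eq:maingr} is obtained by a Barbalat-type argument.

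\textbf{Step 1: basic energy estimate.} Combining \eqref{eq:energy} with the entropy identity gives control of
\[
\sup_t\int_\Omega\Bigl(\tfrac{u^2+|w|^2+v|b|^2}{2}+R\Phi(v)+c_v\Phi(\theta)\Bigr)\dd x,
\qquad \Phi(s)=s-1-\ln s .
\]
It also controls the entropy dissipation $\int_0^T\!\!\int\bigl(\frac{\mu u_x^2+\lambda|w_x|^2+\nu|b_x|^2}{v\theta}+\frac{\kappa\theta_x^2}{v\theta^2}\bigr)$. The boundary terms vanish for all three conditions \eqref{eq:bc-C}--\eqref{eq:bc-D}. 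On unbounded domains there is no mean-value point, so I would work on unit intervals $[k,k+1]\cap\Omega$. On each such interval, Jensen's inequality applied to $\Phi$ yields points where $v$ and $\theta$ lie in a fixed compact subset of $(0,\infty)$.

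\textbf{Step 2: pointwise bounds on $v$ (the main obstacle).} For $\eqref{eq:viscosity}_1$, I would introduce $\varphi(v)=\int_1^v\mu(s)/s\dd s$, so that $\varphi(v)_t=\mu u_x/v$. Integrating \eqref{eq:mom} in $x$ from a localized reference point then gives a Kazhikhov-type representation of $v$, in which $e^{\varphi(v)}$ is expressed through $\exp\bigl(\int_0^t(P+\tfrac12|b|^2)\bigr)$-type factors. The magnetic pressure $\tfrac12|b|^2$ has no sign-favorable structure. To control its oscillation, I would follow the abstract and introduce an auxiliary elliptic problem
\[
-\psi_{xx}+\psi=\tfrac12|b|^2-\text{(localized mean)},
\]
with boundary conditions adapted to \eqref{eq:bc-N}/\eqref{eq:bc-D}. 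Testing the induction equation \eqref{eq:ind} against $\psi$ should show that $\int_s^t\bigl(\tfrac12|b|^2-\overline{|b|^2}\bigr)$ grows at most like $C(1+(t-s)^{1/2})$, which is dominated by the linear-in-time damping coming from $R\theta/v$. This square-root-in-time estimate, together with the dissipation from Step 1, makes the representation formula yield $c\le v\le C$ uniformly in time. For arbitrary $\alpha$, the term $\mu_2v^{-\alpha}$ only enlarges $\varphi$ near $v=0$, which helps the lower bound. For the upper bound, $\mu\ge\mu_1$ is enough.

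For $\eqref{eq:viscosity}_2$, $\mu$ depends on $\theta$, and the representation formula picks up the error $\int(\mu(\theta)-\mu_0)\frac{u_x}{v}$. I would treat it perturbatively through a bootstrap: assume $v\le 2C_*$ and $\theta\le 2C_*$, bound the error by $C\gamma$ times dissipative quantities, and close the bootstrap for $\gamma\le\gamma_0$ with $\gamma_0$ depending on the data.

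\textbf{Step 3: $H^1$ bounds and upper bound for $\theta$.} Next I would multiply \eqref{eq:mom} by $u_{xx}$ (or work with $\varphi(v)_x-u$ to get $v_x$), \eqref{eq:trans} by $w_{xx}$, and \eqref{eq:ind} by $b_{xx}$. The temperature is handled by multiplying the $\theta$-equation by $(\theta-2)_+$ and then by $\kappa\theta_t$, or by $K=\int\kappa$ differentiated in time. These give time-independent $H^1$ bounds, using interpolation $\|f\|_\infty^2\le C\|f\|\|f_x\|$ and the Step 1 dissipation localized where $\theta\le 2$. The resulting $L^\infty$ bound on $\theta$, together with a lower bound obtained by a comparison or Moser argument on $1/\theta$, establishes \eqref{eq:mainvt}. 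The standard estimates on $u_t,\theta_t$ then give \eqref{eq:mainds}.

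\textbf{Step 4: large-time behavior.} From \eqref{eq:mainds} and the equations, $\int_0^\infty\bigl|\frac{d}{dt}\|(v_x,u_x,\theta_x,w_x,b_x)\|^2\bigr|\dd t<\infty$. Hence $\|(v_x,u_x,\theta_x,w_x,b_x)\|^2$, which is also integrable in time, tends to $0$. The $L^2$ norms stay bounded by Step 1, so the Gagliardo--Nirenberg inequality gives decay in $L^p$ for $2<p\le\infty$, which is \eqref{eq:maingr}. Uniqueness follows from Lemma~\ref{lem:01a}.
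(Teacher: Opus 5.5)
Your overall architecture matches the paper: entropy estimate, cell-localized pointwise bounds, higher-order estimates, continuation, and decay from $\int_0^\infty\bigl|\frac{\dd}{\dd t}\|(v_x,u_x,\theta_x,w_x,b_x)\|_{L^2}^2\bigr|\dd t<\infty$. However, the step you identify as the main obstacle is not carried out by the mechanism you propose.

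For $x\in[N,N+1]$, the oscillation $\bigl|\frac12|b(x,r)|^2-\frac12\int_N^{N+1}|b|^2\dd y\bigr|$ is bounded by $\int_N^{N+1}|b||b_y|\dd y$. At this stage the only magnetic dissipation bounded uniformly in time is $\int_0^\infty\!\int_\Omega\frac{|b_x|^2}{v\theta}\dd x\dd t\leq E_0$. The dissipation $\frac{|b_x|^2}{v}$ that appears when one tests \eqref{eq:ind} is not controlled, because $\theta$ is not yet bounded above. By Cauchy--Schwarz, the square-root estimate therefore reduces to
\[
 \int_s^t\!\int_\Omega v\theta|b|^2\dd x\dd r\leq C(1+t-s),
\]
a bound on the magnetic energy weighted by the still unbounded $v\theta$. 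Step 1 does not give this. You also give no identity by which testing \eqref{eq:ind} against a $\psi$ with source $\frac12|b|^2$ minus a mean would produce a coercive term of this type, let alone a pointwise-in-$x$ bound on a time integral.

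The missing idea is to take the full relative energy density as the source, $-(\psi_x/v)_x+\psi=\eta$, and to pair $\psi$ with the relative-energy equation \eqref{eq:ent}. With $F=-\psi_x/v$ one has $F_x=\eta-\psi$. The heat-flux term then becomes $\int_\Omega(\eta-\psi)\mathcal K_\beta(\theta)\dd x$ and yields the coercive quantity $\int_\Omega\mathcal K_\beta(\theta)\eta\dd x$, which controls $v\theta|b|^2$ by \eqref{eq:vtb}. The remaining terms are handled by $\|F\|_{L^\infty}\leq E_0$, $\|\psi\|_{L^\infty}\leq C$, and $0\leq\mathcal H\leq C$. In case $\eqref{eq:viscosity}_1$, the additional term $I_\alpha$ is handled by Lemma~\ref{l:heat-conv}.

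Three further steps would not give time-independent constants as you describe them.

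\emph{Lower bound for $\theta$.} A comparison or Moser argument on $1/\theta$ only yields $\theta^{-1}\leq\|\theta_0^{-1}\|_{L^\infty}+Ct$. What makes the bound uniform is the following: $\|(\theta^{-1}-A_0)_+\|_{L^{p+1}}$ grows at a rate at most $C$, independent of $p$, but it vanishes for almost every $t$ outside $\mathcal B=\{t:D_\theta(t)>\delta\}$, because Lemma~\ref{l:osc} gives $\theta\geq m_\theta/2$ there. Since $|\mathcal B|\leq E_0/\delta$, it stays bounded. The same good-time/bad-time device gives the lower bound for $v$ before $\theta$ is bounded below. The resulting lower bound for $\theta$ is already used in Proposition~\ref{p:v-up} and throughout Section~\ref{sec:high}, so it cannot be postponed to your Step~3.

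\emph{Unweighted dissipation.} Step 1 controls $u_x^2$, $|w_x|^2$, $|b_x|^2$ only with the weight $\frac1{v\theta}$. The $(\theta-2)_+$ and $v_x$ estimates, however, require $\int_0^\infty\!\int_\Omega(\mu u_x^2+|w_x|^2+|b_x|^2+|b|^2|b_x|^2)\dd x\dd t$ and $\int_0^\infty(\|w_x\|_{L^\infty}^2+\|b_x\|_{L^\infty}^2)\dd t$ to be finite before any upper bound for $\theta$ is known. The paper obtains these from the modified entropy multiplier $g_\beta$ in Proposition~\ref{p:cap} and from Lemma~\ref{l:flux}.

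\emph{Bootstrap for $\eqref{eq:viscosity}_2$.} A bootstrap on upper bounds for $v$ and $\theta$ alone does not control the error terms. These involve $\theta_x$ and $\theta_t$ through $\mu_x,\mu_t$, or $u_x$ pointwise in space. The paper also bootstraps the lower bounds and the full norm $\mathcal N_T\leq M$, and requires $\gamma(1+M)^8\leq\varepsilon_*$.
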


A few remarks are in order.
\begin{remark}\label{r:large-data}   It should be emphasized that no smallness condition on the initial data is required in our main Theorem \ref{thm:main}, while smallness assumptions are imposed in the previous related works (see 
\cite{KawashimaOkada1982,LiuZeng1997,QinLiuWang2015} and references therein). More precisely, for every fixed $\beta\geq0$ and for $\alpha$ or $\gamma$
in the range specified for the corresponding viscosity law, we establish uniform-in-time estimates and the asymptotic convergence of solutions toward equilibrium  for arbitrarily large initial data. In particular, our Theorem \ref{thm:main} covers the classical case where all transport coefficients are positive constants. To our knowledge, the large-data asymptotic stability for this classical system has not been established previously for the Cauchy and half-line problems considered in this paper.
\end{remark}

\begin{remark}\label{r:ns-extension} For the Cauchy problem and the thermally insulated half-line problem, our results extend the large-data stability theory of compressible Navier--Stokes equations to the planar compressible MHD system. This generalizes the previous results established by Li--Liang \cite{LiLiang2016} for the constant heat conductivity case \((\beta=0)\) and by Li--Xu \cite{LiXu2026} for temperature-dependent heat conductivity with \(\beta>0\), where \(\kappa(\theta)=\widetilde\kappa\theta^\beta\). In particular, the present extension allows for arbitrarily large initial perturbations in the transverse velocity and magnetic field.
\end{remark}
\begin{remark}\label{r:ns-temp-viscosity}
For the Cauchy problem of the one-dimensional compressible Navier--Stokes equations with temperature-dependent transport coefficients, Dong--Guo \cite{DongGuo2025} established global existence and asymptotic stability for large initial data under $\mu(\theta)=\theta^a$ and $\kappa(\theta)=\theta^b$, where $b\geq0$ is arbitrary and $a\geq0$ is sufficiently small. Their result requires $v_0-1\in H^2(\mathbb R)$ and $(u_0,\theta_0-1)\in H^1(\mathbb R)\times H^1(\mathbb R)$. In the notation of the present paper, $a$ corresponds to $\gamma$ and $b$ to $\beta$. Our Theorem~\ref{thm:main} improves the regularity requirement on the initial specific volume from $H^2$ to $H^1$ and extends this large-data stability theory to the planar compressible MHD system. In addition, the present result covers the corresponding thermally insulated half-line problems and allows arbitrarily large initial perturbations in the transverse velocity and magnetic field.
\end{remark}

We now outline the main difficulties and key estimates involved in the proof. Principal challenges arise from two key aspects in the present large-data unbounded-domain problem. First, the unboundedness of the domain creates essential difficulties, which have been discussed in \cite{Shang2022,TongWangZhang2026,XiaoLu2021,ZhongXie2023}. As pointed out in \cite{Shang2022}, the Poincar\'{e} inequality valid for bounded intervals is no longer available, so the conventional magnetic-field estimates cannot be carried over directly. Second, in the large-data setting, the strong magnetic coupling effect must be rigorously controlled without any smallness assumption on the initial perturbation. The two viscosity laws lead to different estimates for $v$.
For $\eqref{eq:viscosity}_1$, the density-dependent part of the viscosity
also contributes an additional term to the elliptic estimate.
For $\eqref{eq:viscosity}_2$, derivatives of the viscosity produce terms
that must be controlled by the smallness of $\gamma$.

In case $\eqref{eq:viscosity}_1$, we introduce $M(v)$ in \eqref{eq:Mv},
so that $M(v)_t=\frac{\mu u_x}{v}$. Normalizing $Z_N$ in
\eqref{eq:Zdef} by its cell integral $\overline Z_N$ cancels the
cell average of the stress $\sigma$ in \eqref{eq:sigma} and gives
\eqref{eq:Z-evol}. The weighted mean in this identity is bounded
by the energy estimate. For a sufficiently small fixed $\delta>0$, define
\[
 \mathcal B\triangleq\left\{t\geq0:
 \int_\Omega\frac{\theta^{\beta-2}\theta_x^2}{v}\dd x>\delta
 \right\}.
\]
When $v$ is sufficiently small and $t\notin\mathcal B$,
\eqref{eq:good-two} gives $\theta\geq c_\theta>0$.
Together with the uniform upper bound for the weighted mean
in \eqref{eq:Zmean}, this makes the right-hand side of
\eqref{eq:Z-evol} nonnegative. For $t\in\mathcal B$, the same
right-hand side is still bounded below by $-C$, since
$\theta/v$ and $|b|^2/2$ are nonnegative.The measure bound in
\eqref{eq:bad-measure} therefore yields the uniform positive lower
bound for $v$, without using a uniform lower bound for $\theta$;
see Proposition~\ref{p:vlow}.

The next point is to obtain a uniform positive lower bound for the
temperature. For a sufficiently large fixed constant $A_0$ and $p>2$,
set
\[
 Y_p(t)=\norm{(\theta^{-1}(t)-A_0)_+}_{L^{p+1}(\Omega)}.
\]
Using the uniform positive lower bound \eqref{eq:vlow} for $v$, the truncation argument
in \cite{CaoPengSun2021,LuShiXiong2021,TongWangZhang2026} gives
$Y_p'(t)\leq C$ for almost every $t$ with $Y_p(t)>0$
(see \eqref{eq:Ypdt}), where $C$ is independent of
$p$ and time. Integrating this estimate over the whole time interval
would introduce a term $Ct$. To remove this time dependence, we
observe that the local mean and oscillation estimates in
Lemmas~\ref{l:cells} and~\ref{l:osc} yield a fixed positive lower bound
for $\theta$ for almost every $t\notin\mathcal B$; see \eqref{eq:good}.
Thus,
the choice of $A_0$ in \eqref{eq:invcut} ensures that $Y_p(0)=0$
and $Y_p(t)=0$ for almost every
$t\notin\mathcal B  $. Moreover, the energy
estimate \eqref{eq:ent-id} implies that the measure of $\mathcal B$ is bounded. Thus, if $Y_p(t)>0$, let $(s,r)$ be the connected
component of $\{Y_p>0\}$ containing $t$. Since $Y_p(s)=0$, integrating
\eqref{eq:Ypdt} only from $s$ to $t$ gives
\[
 Y_p(t)\leq C(t-s)\leq C|\mathcal B  |\leq C,
\]
with a constant independent of $p$ and $t$. Letting $p\to\infty$
yields the uniform positive lower bound \eqref{eq:t-low} for $\theta$;
see Proposition~\ref{p:tlow} for details.

For $\eqref{eq:viscosity}_1$, a further difficulty lies in establishing a uniform-in-time upper
bound for the specific volume. At a point where $Z_N$ attains its
maximum on $[N,N+1]$, the difference between $\frac{\theta}{v}$ and its
average with weight $Z_N$ in \eqref{eq:Z-evol} is
bounded above by a negative constant whenever $\frac{Z_N}{\overline Z_N}$
is sufficiently large and $t\notin\mathcal B$. The time integral
of this difference over $\mathcal B$ has a uniform upper bound,
by \eqref{eq:max-pressure} and \eqref{eq:bad-measure}.
The magnetic term is the difference
between $\frac12|b|^2$ and its average with weight $Z_N$ over
$[N,N+1]$, and is bounded above by
$\int_N^{N+1}|b||b_y|\dd y$.
Integrating the resulting differential inequality gives
\eqref{jhoa1}, with a negative term proportional to $t-s$.
It thus remains to estimate
$\int_s^t\!\int_N^{N+1}|b||b_y|\dd y\dd r$. The
Cauchy--Schwarz inequality bounds this integral by
\begin{equation*}
 C\left(\int_s^t\!\int_\Omega v\theta|b|^2\dd x\dd r\right)^{\frac12}
 \left(\int_s^t\!\int_\Omega\frac{|b_x|^2}{v\theta}\dd x\dd r\right)^{\frac12}.
\end{equation*}
The second factor is bounded by \eqref{eq:ent-id}. It therefore
suffices to prove
\begin{equation*}
 \int_s^t\!\int_\Omega v\theta|b|^2\dd x\dd r\leq C(1+t-s).
\end{equation*}

To this end, we use the normalization in Section~\ref{sec:lower}
and set $\Phi(z)=z-\log z-1$. The relative energy density
\begin{equation}
 \eta\triangleq\frac{u^2+|w|^2+v|b|^2}{2}+\Phi(v)+\Phi(\theta)
      \label{eq:eta} 
\end{equation} satisfies (see \eqref{eq:ent})
\begin{equation}
 \eta_t-\left(\frac{\left(\mathcal K_\beta(\theta)\right)_x}{v}\right)_x =\cdots,
                                                               \label{eq:ent-I}
\end{equation} where 
\begin{equation}
 \mathcal K_\beta(\xi)
 \triangleq\int_1^\xi s^{\beta-1}(s-1)\dd s,
 \qquad \xi>0.                                  \label{eq:K-beta}
\end{equation}
By Remark~\ref{remark:k},
\begin{equation}
 v\theta|b|^2
 \leq C\eta+C\mathcal K_\beta(\theta)\eta.       \label{eq:vtb}
\end{equation}
Thus, it suffices to estimate
$\int_\Omega\mathcal K_\beta(\theta)\eta\dd x$.
For this purpose, we introduce an  auxiliary elliptic equation
\begin{equation}\label{eq:aux1}
 -\left(\frac{\psi_x}{v}\right)_x+\psi=\eta, \quad \psi\in H^1(\Omega),
\end{equation} which plays an important role in our analysis. Multiplying
\eqref{eq:ent} by $\psi$ gives \eqref{eq:res-id}. In this
identity, the constant part of the viscosity yields the positive term
$\frac{\mu_1}{2}\int_\Omega\eta u^2\dd x$, while its density-dependent
part leaves the additional integral
\begin{equation*}
 I_{\alpha}=\mu_2\int_\Omega v^{-\alpha}Fuu_x\dd x,
 \qquad F=-\frac{\psi_x}{v}.
\end{equation*}
We estimate $I_{\alpha}$ directly by \eqref{eq:Ialpha-est}, without
differentiating $\mu$. The truncated temperature estimate
\eqref{eq:heat-conv} then allows this term to be absorbed after
integration in time. Consequently,
\begin{equation*}
 \int_s^t\!\int_\Omega\mathcal K_\beta(\theta)\eta\dd x\dd r
 \leq C(1+t-s).
\end{equation*}
This yields the required bound for $v\theta|b|^2$ and hence the
square-root estimate \eqref{eq:b-osc}. Its growth is dominated by
the negative term proportional to $t-s$ in \eqref{jhoa1}, which
gives the upper bound for $v$; see Lemmas~\ref{l:res} and
\ref{l:bP} and Propositions~\ref{p:bcap} and~\ref{p:v-up}.
The argument places no upper restriction on $\alpha$.

For $\eqref{eq:viscosity}_2$, division of the momentum equation by $\mu$
gives \eqref{eq:sigma-T}, with the remainder \eqref{eq:hmu-T}.
Under \eqref{eq:boot} and \eqref{eq:small-gamma}, the viscosity is
bounded above and away from zero, and the remainder satisfies
\eqref{eq:hmu-bd-T}. The representation formula
\eqref{jhoa1-T} therefore contains an additional term
$C\varepsilon_{\gamma}(1+t-s)$ in the exponent.
In the elliptic estimate, differentiating $\mu$ produces
$-\frac12\int_\Omega\mu_xFu^2\dd x$; its time integral is
controlled by \eqref{eq:mu-u2-T}. Together with
\eqref{eq:bP-mu-T}, this gives the upper bound for $v$ after
choosing $\varepsilon_{\gamma}$ sufficiently small.
The same representation formula then gives the positive lower
bound for $v$, and Proposition~\ref{p:tlow} applies to $\theta$.
The estimates in this case are derived under the temporary bounds
\eqref{eq:boot}; these assumptions are removed by the choice
\eqref{eq:gamma-choice} and a continuation argument.

Once the upper bound for $v$ is available, a natural next step is to
estimate $M(v)_x$ in case $\eqref{eq:viscosity}_1$ and $(\log v)_x$
in case $\eqref{eq:viscosity}_2$. The corresponding energy estimates require
control of
\begin{equation*}
 \int_0^\infty\!\int_\Omega
 \left\{u_x^2+\frac{\theta_x^2}{ \theta}
 +|b|^2|b_x|^2\right\}\dd x\dd t.
\end{equation*} 
To this end, we   first construct a bounded, nondecreasing function $g_\beta$
that agrees with $2(1-\frac1\theta)$ at low and moderate temperatures
and is suitably modified at high temperatures; see \eqref{eq:capdef}. The choice of  $g_\beta$  allows us to estimate $  w_x  $ and $ b_x $
before estimating $v_x$ or obtaining an upper bound for $\theta$;
see Proposition \ref{p:cap}. In particular, \eqref{eq:tr-max} and \eqref{eq:b-inf} give
\begin{align*}
 &\int_0^\infty\!\int_\Omega|b|^2|b_x|^2\dd x\dd t\leq C,\\
 &\int_0^\infty\left(
 \norm{b_x(t)}_{L^\infty(\Omega)}^2
 +\norm{b_x(t)}_{L^\infty(\Omega)}^4
 +\norm{b(t)}_{L^\infty(\Omega)}^4\right)\dd t\leq C.
\end{align*}
When $\beta>0$, Proposition \ref{p:cap} also yields an estimate for the space-time integral  of $\frac{\theta_x^2}{ \theta}$.
For $\beta=0$, we   adapt  the truncation
argument of Li-Liang \cite{LiLiang2016} on the region where the
temperature is large to obtain an estimate for the space-time integral  of $ \theta_x^2 $.   The additional heating terms involving $w_x$
and $b_x$ are controlled by \eqref{eq:tr-max}, while the magnetic
pressure terms are handled by Young's inequality and \eqref{eq:b-inf};
see Proposition~\ref{p:hot0}.
We can then obtain the estimate for $v_x$, using \eqref{eq:lj09}
for $\eqref{eq:viscosity}_1$ and \eqref{eq:lj09-T} for
$\eqref{eq:viscosity}_2$; see Propositions \ref{p:cap} and \ref{p:hot0} and  Lemmas \ref{l:flux} and  \ref{l:vx}.

The upper bounds for the  temperature  in
\cite{CaoPengSun2021,LuShiXiong2021,TongWangZhang2026} depend on
$T$. Here, the key input is \eqref{eq:tr-max}, which holds on
$(0,\infty)$ and is obtained before the upper bound for $\theta$.
This allows us to adapt the temperature estimates
of Li--Xu \cite{LiXu2026} to control the additional magnetic
heating terms with constants independent of time.
After some careful analysis, using \eqref{eq:mechwt},
\eqref{eq:Y-L1}, and Gronwall's inequality, we obtain the uniform
upper bound \eqref{eq:t-up}; see Proposition~\ref{p:thig}
for details.

The rest of the paper is organized as follows. Section~2 contains
the basic estimates. In Section~3, we derive the uniform bounds
for $v$ and the positive lower bound for $\theta$, treating the
two viscosity laws separately after introducing the common
elliptic estimates. Section~4 is devoted to the remaining
uniform estimates. In Section~5, we establish global existence
and large-time behavior and complete the proof of the main theorem.

\section{Basic estimates}\label{sec:lower}

Let $T_*$ be the maximal existence time supplied by
Lemma~\ref{lem:01a}, and fix $T<T_*$. All estimates in
Sections~\ref{sec:lower}--\ref{sec:high}, and in
Proposition~\ref{p:full}, are a priori estimates on $[0,T]$.
To simplify notation, we give the proof for
\[
 R=c_v=\lambda=\nu=\widetilde\kappa=1,
\]
and additionally take $\mu_0=1$ in case $\eqref{eq:viscosity}_2$.
The same estimates hold for general fixed positive coefficients,
with the corresponding constant weights retained.
In case $\eqref{eq:viscosity}_1$, we consider $\alpha>0$, allowing
$\mu_2=0$ in the estimates. The case $\alpha=0$ is included by
replacing $\mu_1$ with $\mu_1+\mu_2$ and setting $\mu_2=0$.
An integer $N$ is admissible if $N\in\mathbb Z$ on the whole line
and $N\in\mathbb N_0$ on the half-line. Set
\[
 \Phi(z)=z-\log z-1,\qquad z>0.
\]
Constants may depend on the fixed positive coefficients, $\beta$,
the problem under consideration, and the initial data, including
the positive lower bounds in \eqref{eq:init}. In case
$\eqref{eq:viscosity}_1$, they may also depend on the fixed $\alpha$.
In case $\eqref{eq:viscosity}_2$, they are independent of $\gamma$
in the range considered below and of the temporary constant $M$.
Unless stated otherwise, they are independent of $T$ and $N$.
Spatial norms with the domain omitted are taken over $\Omega$.
The calculations are justified in Appendix~\ref{sec:local}.

In case $\eqref{eq:viscosity}_2$, we assume temporarily that the local solution on $[0,T]$
satisfies, for some $M\geq2$,
\begin{equation}
 M^{-1}\leq v,\theta\leq M,\qquad \mathcal N_T\leq M,
 \label{eq:boot}
\end{equation}
where
\begin{align}
 \mathcal N_T={}&\sup_{0\leq t\leq T}
 \norm{(v-1,u,\theta-1,w,b)(t)}_{H^1(\Omega)}^2\notag\\
 &+\int_0^T\bigl\{
 \norm{v_x}_{L^2(\Omega)}^2
 +\norm{(u_x,\theta_x,w_x,b_x)}_{H^1(\Omega)}^2
 +\norm{(u_t,\theta_t,w_t,b_t)}_{L^2(\Omega)}^2\bigr\}\dd t.
 \label{eq:boot-norm}
\end{align}
Assume also that
\begin{equation}
 0\leq\gamma\leq1,\qquad \gamma\log M\leq\log2,
 \qquad \varepsilon_{\gamma}:=\gamma(1+M)^8\leq\varepsilon_*.
 \label{eq:small-gamma}
\end{equation}
Here $\varepsilon_*>0$ will be chosen sufficiently small, depending
only on the data, $\beta$, and fixed coefficients. In particular,
\begin{equation}
 \frac12\leq\mu\leq2,\qquad
 \mu_x=\gamma\mu\frac{\theta_x}{\theta},\qquad
 \mu_t=\gamma\mu\frac{\theta_t}{\theta}.
 \label{eq:mu-basic}
\end{equation}

For either viscosity law, multiplying \eqref{eq:mom}--\eqref{eq:ind} by $u$, $w$,
and $b$, respectively, adding the resulting identities, and using
$v_t=u_x$, we obtain the mechanical energy equation. Subtracting it
from \eqref{eq:energy} gives the temperature equation
\begin{equation}
 \theta_t+\frac{\theta}{v}u_x
 =\left(\frac{\theta^\beta\theta_x}{v}\right)_x
 +\frac{\mu u_x^2+|w_x|^2+|b_x|^2}{v}.                    \label{eq:temp}
\end{equation}

We multiply \eqref{eq:mass}, \eqref{eq:mom},
\eqref{eq:trans}, \eqref{eq:ind}, and
\eqref{eq:temp} by
\[
 1-\frac1v,\qquad u,\qquad w,\qquad b,\qquad
 1-\frac1\theta,
\]
respectively, and add the resulting identities. Using
\[
 \left(1-\frac1v\right)v_t=\Phi(v)_t,\qquad
 \left(1-\frac1\theta\right)\theta_t=\Phi(\theta)_t,
 \qquad
 b\cdot(vb)_t
 =\frac12\bigl(v|b|^2\bigr)_t+\frac12|b|^2u_x,
\]
we find that 
  the relative energy density of the MHD system  \(\eta\)  defined by  \eqref{eq:eta}  satisfies
\begin{equation}
 \eta_t-\left(\frac{\left(\mathcal K_\beta(\theta)\right)_x}{v}\right)_x+\mathcal D_\beta=(\mathcal G_\beta)_x,
                                                               \label{eq:ent}
\end{equation} with $\mathcal K_\beta(\xi)$  as in \eqref{eq:K-beta},  
\begin{equation}
 \mathcal D_\beta
 \triangleq\frac{\mu u_x^2+|w_x|^2+|b_x|^2}{v\theta}
 +\frac{\theta^{\beta-2}\theta_x^2}{v},           \label{eq:D}
\end{equation}
and
\begin{equation}
 \mathcal G_\beta
\triangleq u\left(1-\frac{\theta}{v}-\frac12|b|^2\right)
 +\frac{\mu uu_x}{v}+w\cdot b
 +\frac{w\cdot w_x+b\cdot b_x}{v}
 .
                                                               \label{eq:G}
\end{equation} 
We now record several properties of $\mathcal K_\beta$ that will be used below.
\begin{remark}\label{remark:k} 
For every $\xi>0$, the following elementary inequalities hold:
\begin{equation}
 |\xi-1|^2\leq C\Phi(\xi)\{1+\Phi(\xi)\},
 \qquad
 \xi\leq C\{1+\Phi(\xi)\}.                       \label{eq:Phi}
\end{equation}
Since $
 \mathcal K_\beta(\theta)\geq0 $ for $\theta>0$  and
$\mathcal K_\beta(\theta)\geq\mathcal K_0(\theta)=\Phi(\theta)$
for $\theta\geq1$, these inequalities imply 
\[
 \theta\leq C\{1+\mathcal K_\beta(\theta)\},
\] which, together with \eqref{eq:eta}, implies \eqref{eq:vtb}.
\end{remark}

A direct consequence of \eqref{eq:ent} is the following energy estimate.
\begin{lemma}\label{l:ent}
For every $0\leq t\leq T$,
\begin{equation}
 \int_\Omega\eta \dd x  +\int_0^t\!\int_\Omega \mathcal D_\beta\dd x\dd s
 =E_0,                                                \label{eq:ent-id}
\end{equation}
where
\begin{equation*}
 E_0=\int_\Omega\left\{
 \frac{u_0^2+|w_0|^2+v_0|b_0|^2}{2}
 +\Phi(v_0)+\Phi(\theta_0)\right\}\dd x.
\end{equation*}

\end{lemma}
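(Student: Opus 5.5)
The plan is to integrate the pointwise identity \eqref{eq:ent} over $\Omega\times(0,t)$ and show that both divergence terms $\bigl(\frac{(\mathcal K_\beta(\theta))_x}{v}\bigr)_x$ and $(\mathcal G_\beta)_x$ have zero spatial integral for almost every time. What then remains is exactly $\frac{d}{dt}\int_\Omega\eta\dd x+\int_\Omega\mathcal D_\beta\dd x=0$. Integrating in time and using $\eta(x,0)$ given by the initial data then yields \eqref{eq:ent-id} with the stated $E_0$. Finiteness of $E_0$ follows from \eqref{eq:init}: $v_0,\theta_0$ are bounded above and below, so $\Phi(v_0)\leq C|v_0-1|^2$ and $\Phi(\theta_0)\leq C|\theta_0-1|^2$ are integrable, and $v_0|b_0|^2\in L^1$.

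The first step is the derivation of \eqref{eq:ent} itself, as indicated before the statement. Multiply \eqref{eq:mass}, \eqref{eq:mom}, \eqref{eq:trans}, \eqref{eq:ind} and \eqref{eq:temp} by $1-\frac1v$, $u$, $w$, $b$ and $1-\frac1\theta$, and add. The pressure terms combine: $-\frac1v u_x$ from the mass equation, $-(\frac{\theta}{v})_x u$ from momentum, and $-\frac{\theta}{v}u_x+\frac1v u_x$ from $-(1-\frac1\theta)\frac\theta v u_x$ give $-(\frac{\theta u}{v})_x+u_x$. Similarly, the magnetic terms $-\frac12(|b|^2)_xu+w\cdot b_x+b\cdot w_x-\frac12|b|^2u_x$ form a total derivative. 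Each viscous term splits as a flux minus a dissipation, and the dissipation cancels against the heating in $(1-\frac1\theta)\frac{\mu u_x^2+|w_x|^2+|b_x|^2}{v}$, leaving $\frac{\mu u_x^2+\cdots}{v\theta}$. Finally,
\[
 \Bigl(1-\frac1\theta\Bigr)\Bigl(\frac{\theta^\beta\theta_x}{v}\Bigr)_x=\Bigl(\frac{(\mathcal K_\beta(\theta))_x}{v}\Bigr)_x-\frac{\theta^{\beta-2}\theta_x^2}{v},
\]
because $\mathcal K_\beta'(\theta)=\theta^{\beta-1}(\theta-1)$.

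The second step is the vanishing of the boundary and far-field contributions. By Lemma~\ref{lem:01a}, for a.e.\ $t$ we have $u,w,b,\theta-1,v-1\in H^1$ and $u_x,w_x,b_x,\theta_x\in H^1$, so the fluxes $\mathcal G_\beta$ and $\frac{\theta^{\beta-1}(\theta-1)\theta_x}{v}$ lie in $W^{1,1}(\Omega)$. They are therefore continuous and tend to zero at infinity, with $v,\theta$ bounded above and below on $[0,T]$. At $x=0$ in the half-line cases, $u(0,t)=w(0,t)=0$ kills $u(\cdots)$, $\frac{\mu uu_x}{v}$, $w\cdot b$ and $\frac{w\cdot w_x}{v}$. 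The term $\frac{b\cdot b_x}{v}$ vanishes by $b=0$ under \eqref{eq:bc-D} and by $b_x=0$ under \eqref{eq:bc-N}, and the heat flux vanishes by $\theta_x(0,t)=0$.

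The main obstacle is justification at the level of regularity in \eqref{eq:finite}. In particular, the time derivative identity $\frac{d}{dt}\int\eta=\int\eta_t$ and the Neumann traces hold only for a.e.\ $t>0$. I would handle this by working with the integrated-in-time form: I would use $\eta\in C([0,T];L^1)$ (from $H^1$ continuity and the bounds on $v,\theta$) and $\eta_t\in L^1(\Omega\times(0,T))$. If needed, I would first approximate by the smooth solutions referred to in Appendix~\ref{sec:local} and pass to the limit.
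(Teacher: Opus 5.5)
Your proposal is correct and follows essentially the same route as the paper. You integrate \eqref{eq:ent} over $\Omega\times(0,t)$ and kill the $x=0$ fluxes with $u=w=0$, $\theta_x=0$ and either $b=0$ or $b_x=0$; for the far field you use that the fluxes lie in $W^{1,1}(\Omega)$ for a.e.\ $t$, where the paper uses a cutoff argument, and the two are equivalent. Your additional checks (the derivation of \eqref{eq:ent}, finiteness of $E_0$, and the time-continuity and approximation justification matching Appendix~\ref{sec:local}) are sound. One small bookkeeping slip: the mass equation contributes $u_x-\frac1v u_x$, not just $-\frac1v u_x$, although your final total $u_x-(\frac{\theta u}{v})_x$ is correct.
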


\begin{proof}
For both half-line problems, the condition $\theta_x(0,t)=0$ gives
\[
 \frac{(\mathcal K_\beta(\theta))_x}{v}(0,t)=0.
\]
The boundary terms in $\mathcal G_\beta$ also vanish at $x=0$:
in the Neumann case, $u=w=0$ and $b_x=0$, while in the Dirichlet
case, $u=w=b=0$.
The boundary terms at infinity are handled by a standard cutoff argument,
using the finite-time bounds and regularity of the solution.
Integrating \eqref{eq:ent} over
$\Omega\times(0,t)$ therefore gives \eqref{eq:ent-id}.
\end{proof}

\begin{lemma}\label{l:cells}
There exist constants $0<m_0<M_0<\infty$ such that, for every
admissible integer $N$ and all $0\leq t\leq T$,
\begin{equation}
 \begin{gathered}
 m_0\leq\int_N^{N+1}v\dd x\leq M_0, \quad
 m_0\leq\int_N^{N+1}\theta\dd x\leq M_0,
 \end{gathered}                                        \label{eq:cell}
\end{equation}
and
\begin{equation}
 \bigl|\{x\in\Omega:\theta(x,t)<\frac12\}\bigr|\leq C,
 \qquad 0\leq t\leq T.                                      \label{eq:cold}
\end{equation}
Moreover, for every fixed $K>1$,
\begin{equation}
 \bigl|\{\theta>K\}\bigr|
 +\int_{\{\theta>K\}}\theta\dd x
 +\int_{\{\theta>K\}}v\dd x
 \leq C_K,\qquad 0\leq t\leq T,                   \label{eq:hot}
\end{equation}  where we set  $\{\theta>K\}\triangleq \{x\in\Omega:\theta(x,t)>K\}.$
\end{lemma}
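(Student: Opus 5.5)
All four estimates will follow from the energy identity \eqref{eq:ent-id} alone. Since $\eta\geq\Phi(v)+\Phi(\theta)\geq0$, we have
\[
 \int_\Omega\Phi(v)\dd x+\int_\Omega\Phi(\theta)\dd x\leq E_0
\]
for all $0\leq t\leq T$. The plan is to convert this uniform $L^1$ bound on the convex functions $\Phi(v)$ and $\Phi(\theta)$ into cell averages and measure bounds, using the elementary properties of $\Phi$: it is convex, nonnegative, vanishes only at $1$, tends to $+\infty$ as $z\to0^+$ and as $z\to\infty$, and satisfies $z\leq C\{1+\Phi(z)\}$ by \eqref{eq:Phi}.

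\textbf{Cell bounds \eqref{eq:cell}.} The upper bounds are immediate from \eqref{eq:Phi}:
\[
 \int_N^{N+1}v\dd x\leq C\int_N^{N+1}\{1+\Phi(v)\}\dd x\leq C(1+E_0),
\]
and the same argument applies to $\theta$. For the lower bounds, apply Jensen's inequality on the unit cell, using that $\Phi$ is convex:
\[
 \Phi\Bigl(\int_N^{N+1}v\dd x\Bigr)\leq\int_N^{N+1}\Phi(v)\dd x\leq E_0.
\]
Hence $\int_N^{N+1}v\dd x$ lies in the sublevel set $\{z>0:\Phi(z)\leq E_0\}$. This set is a compact interval $[m_0,M_0]\subset(0,\infty)$, because $\Phi(z)\to\infty$ at both ends. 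The same holds for $\theta$, and this gives both sides of \eqref{eq:cell} with constants depending only on $E_0$.

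\textbf{Measure bounds \eqref{eq:cold} and \eqref{eq:hot}.} On $\{\theta<\frac12\}$ we have $\Phi(\theta)\geq\Phi(\frac12)>0$, since $\Phi$ is decreasing on $(0,1)$. Chebyshev's inequality then gives
\[
 \bigl|\{\theta<\tfrac12\}\bigr|\leq\frac{E_0}{\Phi(1/2)}.
\]
Similarly, on $\{\theta>K\}$ we have $\Phi(\theta)\geq\Phi(K)>0$, so $|\{\theta>K\}|\leq E_0/\Phi(K)$. Moreover $\theta\leq C\{1+\Phi(\theta)\}$, and integrating this over $\{\theta>K\}$ bounds $\int_{\{\theta>K\}}\theta\dd x$ by $C\bigl(|\{\theta>K\}|+E_0\bigr)\leq C_K$.

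For $\int_{\{\theta>K\}}v\dd x$, use $v\leq C\{1+\Phi(v)\}$ pointwise and integrate over the set $\{\theta>K\}$, whose measure we have just bounded:
\[
 \int_{\{\theta>K\}}v\dd x\leq C\bigl|\{\theta>K\}\bigr|+C\int_\Omega\Phi(v)\dd x\leq C_K.
\]

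\textbf{Main point of care.} No step is genuinely hard. The only subtlety is that the lower bounds in \eqref{eq:cell} must not depend on $N$ or $t$. This is exactly why the argument uses Jensen's inequality on unit-length cells, together with the coercivity of $\Phi$ at $0$, rather than any pointwise bound. On the half-line, admissible $N\in\mathbb N_0$ ensures $[N,N+1]\subset\Omega$, so the argument is identical there.
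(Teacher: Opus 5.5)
Your proposal is correct and matches the paper's proof essentially step by step. Like the paper, you use Jensen's inequality on unit cells with the energy identity \eqref{eq:ent-id} for \eqref{eq:cell}, Chebyshev's inequality via $\Phi(\theta)\geq\Phi(\frac12)$ for \eqref{eq:cold}, and the bound $z\leq C\{1+\Phi(z)\}$ together with the coercivity of $\Phi$ on $(K,\infty)$ for \eqref{eq:hot}; your separate upper bound for the cell averages is redundant, since the Jensen sublevel-set argument already gives both sides.
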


\begin{proof} Jensen's inequality and
\eqref{eq:ent-id} give
\[
 \Phi\left(\int_N^{N+1}v\dd x\right)
 \leq\int_N^{N+1}\Phi(v)\dd x\leq E_0.
\]
Thus, the mean of $v$ lies in a fixed compact subinterval of
$(0,\infty)$ depending only on $E_0$.
The same argument applies to $\theta$ and proves \eqref{eq:cell}.
Since $\Phi(z)\geq\Phi\left(\frac12\right)>0$ for
$0<z<\frac12$, we also have
\[
 E_0\geq\int_{\{\theta<\frac12\}}\Phi(\theta)\dd x
 \geq\Phi\left(\frac12\right)
 \bigl|\{\theta<\frac12\}\bigr|,
\]
which proves \eqref{eq:cold}.

For a fixed $K>1$, we have
\[
 1+z\leq C_K\Phi(z)\quad(z>K),
 \qquad z\leq C\{1+\Phi(z)\}\quad(z>0),
\]
and hence \eqref{eq:ent-id} yields
\[
 \bigl|\{\theta>K\}\bigr|
 +\int_{\{\theta>K\}}\theta\dd x
 +\int_{\{\theta>K\}}v\dd x
 \leq C_K.
\]
This proves \eqref{eq:hot}.
\end{proof}

\begin{lemma}\label{l:osc}
There exist constants $m_\theta,C,\delta>0$ such that, for almost
every $0\leq t\leq T$ and every $x\in\Omega$, 
\begin{equation}
 \theta(x,t)\geq m_\theta 
 -C\int_\Omega
 \frac{\theta^{\beta-2}\theta_y^2}{v}\dd y.
                                                        \label{eq:osclin}
\end{equation}
Writing $D_\theta(t)=\int_\Omega\frac{\theta^{\beta-2}\theta_x^2}{v}\dd x$,
we may choose $\delta>0$ so that
\begin{equation}
 c_\theta\leq\theta(x,t)\leq C_\theta
 \quad\text{whenever }D_\theta(t)\leq\delta,
 \qquad x\in\Omega. \label{eq:good-two}
\end{equation}
\end{lemma}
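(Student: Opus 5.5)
We compare $\theta(x,t)$ with its value at a point where it is of order one, and control the difference by a weighted oscillation estimate on $\theta^{\beta/2}$. Both choices of reference point come from Lemma~\ref{l:cells}. We use a function adapted to $\beta$: set $h(\theta)=\theta^{\beta/2}$ for $\beta>0$ and $h(\theta)=\log\theta$ for $\beta=0$. In both cases $|h(\theta)_x|=c_\beta\,\theta^{\beta/2-1}|\theta_x|$. By Cauchy--Schwarz, for any $x_0,x\in\Omega$,
\[
 |h(\theta)(x,t)-h(\theta)(x_0,t)|\leq C\Bigl(\int_{I}v\dd y\Bigr)^{\frac12}D_\theta(t)^{\frac12},
\]
where $I$ is the interval between $x_0$ and $x$. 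We cannot integrate over all of $\Omega$, since $\int_\Omega v$ is infinite. We therefore localize. Fix $x$ and let $N$ be the admissible integer with $x\in[N,N+1]$. By \eqref{eq:cell}, $\int_N^{N+1}v\leq M_0$, and there is $x_0\in[N,N+1]$ with $\theta(x_0,t)=\int_N^{N+1}\theta\in[m_0,M_0]$. So $h(\theta(x_0,t))$ lies in a fixed compact set, and $|h(\theta)(x)-h(\theta)(x_0)|\leq C D_\theta^{1/2}$, with constants independent of $N$, $t$, $T$.

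For $D_\theta(t)\leq\delta$ this gives \eqref{eq:good-two} at once. The range of $h$ evaluated on $[m_0,M_0]$, enlarged by $C\delta^{1/2}$, still maps back under $h^{-1}$ into a compact subinterval $[c_\theta,C_\theta]\subset(0,\infty)$. This uses that $h$ is a homeomorphism of $(0,\infty)$ onto its image. For $\beta>0$ we also need $h(m_0)-C\delta^{1/2}>0$, which holds if $\delta$ is small.

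For the linear bound \eqref{eq:osclin}, the main obstacle is the case $\beta>0$: the oscillation of $\theta^{\beta/2}$ does not directly bound $\theta$ from below linearly in $D_\theta$. The plan is to split into two cases. If $D_\theta\leq\delta$, then \eqref{eq:good-two} gives $\theta\geq c_\theta\geq m_\theta$, with $m_\theta:=c_\theta$. If $D_\theta>\delta$, take $C\geq m_\theta/\delta$, so that $m_\theta-CD_\theta<0\leq\theta$ and \eqref{eq:osclin} is trivial. Thus \eqref{eq:osclin} follows from \eqref{eq:good-two} together with positivity of $\theta$. I would therefore prove \eqref{eq:good-two} first and deduce \eqref{eq:osclin}, rather than try a direct linear estimate.

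The "almost every $t$" qualifier enters because $D_\theta(t)$ is finite only for a.e.\ $t$, since $\theta_x\in L^2(0,T;H^1)$. The pointwise identities also require $\theta(\cdot,t)\in H^1$ with $\theta$ bounded below. Both are supplied by Lemma~\ref{lem:01a} on $[0,T]$, and they affect only qualitative justification, not the constants.

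On the half-line the cells $[N,N+1]$ with $N\in\mathbb N_0$ cover $\Omega$, so nothing changes there.
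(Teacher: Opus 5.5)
Your proposal is correct and follows essentially the paper's argument. It uses the same cell-average reference point from Lemma~\ref{l:cells}, the same oscillation estimate for $\theta^{\beta/2}$ (or $\log\theta$ when $\beta=0$), and the same device of taking $C$ large so that \eqref{eq:osclin} is trivial when $D_\theta(t)$ is not small. The one difference is in the small-$D_\theta$ regime: you invoke the uniform bound $\theta\geq c_\theta$ from \eqref{eq:good-two}, whereas the paper linearizes $\theta\geq m_0-C_1D_\theta^{1/2}$ by Young's inequality. Either works, and because \eqref{eq:good-two} survives shrinking $\delta$, the paper's later requirement $\delta\leq m_\theta/(2C)$ can still be imposed after you fix $C$.
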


\begin{proof}
By \eqref{eq:cell} and the intermediate value theorem, for each
admissible $N$, there exists $y_N(t)\in[N,N+1]$ such that
\[
 \theta(y_N(t),t)=\int_N^{N+1}\theta\dd y\in[m_0,M_0].
\]
Set
\[
 D_\theta(t)=\int_\Omega
 \frac{\theta^{\beta-2}\theta_y^2}{v}\dd y.
\]
If $\beta>0$, then, for $x\in[N,N+1]$,
\begin{align*}
 &\left|\theta^{\frac{\beta}{2}}(x,t)-
 \theta^{\frac{\beta}{2}}(y_N(t),t)\right|\\
 &\quad\leq\frac\beta2
 \left(\int_N^{N+1}
 \frac{\theta^{\beta-2}\theta_y^2}{v}\dd y\right)^{\frac12}
 \left(\int_N^{N+1}v\dd y\right)^{\frac12}\\
 &\quad\leq C_0 D_\theta(t)^{\frac12}.
\end{align*}
Choose $\delta_0>0$ such that
$C_0\delta_0^{\frac12}\leq\frac12m_0^{\frac\beta2}$.
Whenever $D_\theta(t)\leq\delta_0$, we have
\[
 \theta(x,t)
 \geq\left\{\theta(y_N(t),t)^{\frac\beta2}
 -C_0 D_\theta(t)^{\frac12}\right\}^{\frac2\beta}
 \geq\theta(y_N(t),t)-C_1 D_\theta(t)^{\frac12}
 \geq m_0-C_1 D_\theta(t)^{\frac12},
\]
where $C_1>0$. The second inequality follows from the Lipschitz
continuity of $z^{\frac2\beta}$ on a fixed compact interval bounded
away from zero.

If $\beta=0$, the same argument applied to $\log\theta$ gives
\begin{align*}
 |\log\theta(x,t)-\log\theta(y_N(t),t)|
 &\leq
 \left(\int_N^{N+1}\frac{\theta_y^2}{v\theta^2}\dd y\right)^{\frac12}
 \left(\int_N^{N+1}v\dd y\right)^{\frac12}\\
 &\leq C_0 D_\theta(t)^{\frac12}.
\end{align*}
Thus, for some $C_1>0$,
\[
 \theta(x,t)\geq\theta(y_N(t),t)
 \mathrm e^{-C_0 D_\theta(t)^{\frac12}}
 \geq m_0-C_1 D_\theta(t)^{\frac12},
\]
where we used $\mathrm e^{-z}\geq1-z$.
Consequently, there exist constants $\delta_0,C_1>0$ such that  
\[
 \theta(x,t)\geq m_0-C_1D_\theta(t)^{\frac12}\geq\frac{m_0}{2}
 -\frac{C_1^2}{2m_0}D_\theta(t) 
\]  provided $D_\theta(t)\leq\delta_0$. Thus, \eqref{eq:osclin} holds provided we set
\[m_\theta=\frac{m_0}{2}, \quad
 C=\max\left\{
 C_1,m_0\delta_0^{-\frac12},
 \frac{C_1^2}{2m_0},\frac{m_0}{2\delta_0}
 \right\}.
\]
The same estimates for $\theta^{\frac{\beta}{2}}$ or $\log\theta$ give
\eqref{eq:good-two}. Decreasing $\delta$ if necessary, we also
require $\delta\leq \frac{m_\theta}{2C}$, with $m_\theta$ and $C$ as
in \eqref{eq:osclin}.
\end{proof}

We will repeatedly use the following one-dimensional inequalities:
\begin{equation}
 \norm f_{L^\infty}^2\leq2\norm f_{L^2}\norm{f_x}_{L^2},
 \qquad
 \norm f_{L^p}\leq C_p\norm f_{L^2}^{\frac12+\frac1p}
 \norm{f_x}_{L^2}^{\frac12-\frac1p},\quad 2<p\leq\infty, \label{eq:GN}
\end{equation}
valid for $f\in H^1(\Omega)$.

\section{Uniform bounds for the specific volume and a lower bound for the temperature}\label{sec:vup}
\subsection{Auxiliary estimates}

Let $\mathcal V=H^1(\Omega)$, with dual
$\mathcal V^*=(H^1(\Omega))^*$.
\begin{lemma}\label{l:res}
Fix $0<T<T_*$. For almost every $t\in(0,T)$, there exists a unique
$\psi(t)\in H^1(\Omega)$ satisfying
\begin{equation}
 \int_\Omega\left(\frac{\psi_x\varphi_x}{v}+\psi\varphi\right)\dd x
 =\int_\Omega\eta\varphi\dd x
 \qquad\text{for every }\varphi\in H^1(\Omega).          \label{eq:reswk}
\end{equation}
Define
\begin{equation}
 F=-\frac{\psi_x}{v},
 \qquad
 \mathcal H(t)=\frac12\int_\Omega
 \left(\frac{\psi_x^2}{v}+\psi^2\right)\dd x.       \label{eq:FH}
\end{equation}
Then there exists a positive constant $C$, independent of $T$ and of the finite-time pointwise bounds for $v$, and, in case $\eqref{eq:viscosity}_2$, of $M$ and $\gamma$, such that
\begin{align}
 &\psi\geq0,\qquad
 \int_\Omega\psi\dd x=\int_\Omega\eta\dd x,
 \qquad F_x=\eta-\psi,                              \label{eq:res-L1}\\
 &\norm F_{L^\infty(\Omega)}\leq E_0,
 \qquad \norm\psi_{L^\infty(\Omega)}\leq C,       \label{eq:resinf}\\
 &\mathcal H(t)+\int_\Omega F^2\dd x\leq C.
                                                               \label{eq:resst}
\end{align}
On the whole line, $F$ has a continuous representative tending to zero
at both infinities. On the half-line, $F(0,t)=0$ and $F(x,t)\to0$
as $x\to\infty$. Moreover, $\mathcal H\in W^{1,1}(0,T)$, and the
following identity holds in $\mathcal D'(0,T)$:
\begin{equation}
 \mathcal H_t
 = \int_\Omega F\left(\mathcal K_\beta(\theta)\right)_x\dd x+\int_\Omega vF\mathcal G_\beta\dd x
 -\int_\Omega\mathcal D_\beta\psi\dd x
 +\frac12\int_\Omega u_xF^2\dd x.                 \label{eq:rest}
\end{equation}
\end{lemma}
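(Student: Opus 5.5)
Existence and uniqueness of $\psi(t)$ come from Lax--Milgram. For almost every $t$, the solution has $v(\cdot,t)$ bounded above and away from zero, with bounds depending on $T$. Hence the bilinear form $\int(\psi_x\varphi_x/v+\psi\varphi)$ is bounded and coercive on $H^1(\Omega)$. The right-hand side $\eta$ lies in $L^1\cap L^\infty$, since $(v-1,u,\theta-1,w,b)\in H^1\subset L^\infty$ and $\Phi(z)\le C|z-1|^2$ near $1$; in particular $\eta\in L^2\subset\mathcal V^*$. The constants in \eqref{eq:res-L1}--\eqref{eq:resst} must not depend on these finite-time bounds, so every estimate below uses only $\eta\ge0$ and $\int\eta=\int\eta\dd x\le E_0$ from Lemma~\ref{l:ent}.

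To get $\psi\ge0$, test with $\varphi=\psi_-$. The weak form gives $F_x=\eta-\psi$ in $\mathcal D'$, so $F\in W^{1,1}_{loc}$ is continuous. For the integral identity, test with cutoffs $\varphi_R(x)=\chi(x/R)$. The error $\int\psi_x\varphi_{R,x}/v$ is at most $R^{-1}\|\psi_x/\sqrt v\|_{L^2}\|v^{-1/2}\|_{L^2(R<|x|<2R)}$. Using the finite-time bounds for $v$, this is $O(R^{-1/2})\to0$; these bounds are used only qualitatively here. The limit gives $\int\psi=\int\eta$.

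Next come the boundary and decay properties of $F$. On the half-line, the natural (Neumann) boundary condition of the weak formulation, obtained by testing with $\varphi$ nonzero at $0$, gives $F(0,t)=0$. Since $F_x=\eta-\psi\in L^1$, $F$ has limits at $\pm\infty$, and these must vanish because $F=-\psi_x/v\in L^2$. On the whole line, $F(x)=\int_{-\infty}^x(\eta-\psi)$; on the half-line, $F(x)=\int_0^x(\eta-\psi)$. In both cases $\eta,\psi\ge0$ give $|F|\le\max\{\int\eta,\int\psi\}=\int\eta\le E_0$.

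Then $\mathcal H$ and $\|F\|_{L^2}$ are controlled. Testing with $\psi$ gives $2\mathcal H=\int\eta\psi\le E_0\|\psi\|_{L^\infty}$, so everything reduces to an $L^\infty$ bound on $\psi$. On a cell $[N,N+1]$ we have $|\psi_x|=v|F|\le E_0v$, so by \eqref{eq:cell} the oscillation of $\psi$ on the cell is at most $E_0M_0$. Also $\int_N^{N+1}\psi\le\int\eta\le E_0$, so $\min_{[N,N+1]}\psi\le E_0$. Together these give $\|\psi\|_{L^\infty}\le E_0(1+M_0)$. Finally, $\int F^2\le E_0\int|F|\,|F|\le C\int\psi_x^2/v\cdot$… more simply, $\int F^2=\int\psi_x^2/v^2$ is not directly bounded by $\mathcal H$. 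Instead I would write $\int F^2\le E_0\int|F|$ and bound $\int|F|$ using $F=-\psi_x/v$ together with the cell estimates. Alternatively, test the equation with $F$-type functions: $\int F^2=-\int F\psi_x/v$, and then use $F_x=\eta-\psi$ and integrate by parts with a primitive. I expect this to be the delicate step. My first attempt would be $\int F^2\le\|F\|_\infty\int|F|$, with $\int|F|$ bounded via the layer-cake over cells, using $|F|\le\int_{\text{tail}}(\eta+\psi)$. If that fails, I would use $\int F^2=\int\frac{v}{1}\frac{\psi_x^2}{v^2}\cdot\frac1v$ and split the set where $v$ is small.

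The last step is the time derivative identity \eqref{eq:rest}. Differentiating the weak form in $t$, which is justified by $v_t,\eta_t\in L^2(0,T;\mathcal V^*)$ and a Steklov-average argument in the regularity class \eqref{eq:finite}, gives
\[
\mathcal H_t=\langle\eta_t,\psi\rangle+\tfrac12\int v_t\psi_x^2/v^2 .
\]
The first equality holds because $\mathcal H=\tfrac12\int\eta\psi$ and, by symmetry, $\tfrac{d}{dt}$ of the quadratic form gives $\langle\eta_t,\psi\rangle$ minus the derivative of the coefficient term. Substitute $v_t=u_x$ to get $\tfrac12\int u_xF^2$. Then insert \eqref{eq:ent} for $\eta_t$ and integrate by parts against $\psi$, using $\psi_x=-vF$; the boundary terms vanish by the boundary conditions, exactly as in Lemma~\ref{l:ent}. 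This produces $\int F(\mathcal K_\beta)_x+\int vF\mathcal G_\beta-\int\mathcal D_\beta\psi$, which is \eqref{eq:rest}.
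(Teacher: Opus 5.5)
Your proof follows the paper's route in every other step, but it does not prove the bound $\int_\Omega F^2\dd x\le C$ in \eqref{eq:resst}; you leave it open. The steps that match the paper are Lax--Milgram with the finite-time bounds, testing with the negative part, cutoffs for $\int\psi=\int\eta$, the primitive representation giving $\norm F_{L^\infty}\le E_0$, the cell argument $\norm\psi_{L^\infty}\le E_0(1+M_0)$, $2\mathcal H=\int\eta\psi$, and the passage from $\mathcal H_t=\langle\eta_t,\psi\rangle+\frac12\int u_xF^2$ to \eqref{eq:rest}.

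Your first attempt at $\int F^2$ cannot work. Integrating the tail bound $|F(x)|\le\int_x^\infty(\eta+\psi)$, or its left-tail analogue, only gives a first moment such as $\int|x-x_0|(\eta+\psi)\dd x$. That quantity is not controlled by $E_0$: two bumps of $\eta$ at distance $L$ make it of order $L$.

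Your fallback, splitting by the size of $v$, is the right idea. It needs one input you never state: the energy identity \eqref{eq:ent-id} bounds the measure of the set where $v$ is small. Since $\Phi(v)\ge\Phi(\frac12)>0$ on $\{v<\frac12\}$, one has $|\{v<\frac12\}|\le E_0/\Phi(\frac12)$. Combined with $\int_\Omega vF^2\dd x=\int_\Omega\frac{\psi_x^2}{v}\dd x\le2\mathcal H$, this gives
\[
 \int_\Omega F^2\dd x\le 2\int_{\{v\ge\frac12\}}vF^2\dd x+\norm F_{L^\infty}^2\bigl|\{v<\tfrac12\}\bigr|\le 4\mathcal H(t)+\frac{E_0^3}{\Phi(\frac12)}.
\]
This is how the paper closes the estimate. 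The constants are independent of $T$ and of the finite-time pointwise bounds for $v$.

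A smaller point concerns differentiating the weak form in time. A bound $v_t\in L^2(0,T;\mathcal V^*)$ does not let you differentiate the coefficient $1/v$ multiplying $\psi_x\varphi_x$. What you need is $(v^{-1})_t=-u_x/v^2\in L^2(0,T;L^\infty(\Omega))$, which follows from $u_x\in L^2(0,T;H^1)$. With this, coercivity gives
\[
 \norm{\psi(t)-\psi(s)}_{\mathcal V}\le C_T\int_s^t\bigl(\norm{\partial_r\eta}_{\mathcal V^*}+\norm{\partial_r(v^{-1})}_{L^\infty}\bigr)\dd r,
\]
as in the paper. Hence $\psi\in W^{1,1}(0,T;H^1)$, and your Steklov-type differentiation becomes rigorous.
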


\begin{proof}
Lemma~\ref{lem:01a} gives, on the interval under consideration,
$0<c_T\leq v\leq C_T$.
Thus,
\[
 a(\phi,\varphi;t)
 =\int_\Omega\left(\frac{\phi_x\varphi_x}{v}
 +\phi\varphi\right)\dd x
\]
defines a continuous bilinear form on $H^{1}(\Omega)$  satisfying
\[
 a(\phi,\phi;t)\geq
 \min\{1,C_T^{-1}\}\norm{\phi}_{\mathcal V}^2.
\]
The Lax--Milgram theorem therefore gives a unique solution $\psi$.
Taking $\varphi=\min\{\psi,0\}$ in
\eqref{eq:reswk}, we obtain $\psi\geq0$.

Choose cutoff functions $0\leq\chi_R\leq1$ such that $\chi_R$
increases to one and $\norm{\chi_R'}_{L^2}\to0$ as $R\to\infty$.
Test \eqref{eq:reswk} with $\chi_R$.
For the terms involving $\psi$ and $\eta$, we apply the monotone
convergence theorem and the dominated convergence theorem,
respectively. The gradient term tends to zero by the
Cauchy--Schwarz inequality, the bounds $c_T\leq v\leq C_T$, and
$\norm{\chi_R'}_{L^2}\to0$.
Letting $R\to\infty$, we obtain
\begin{equation}
 \int_\Omega\psi\dd x=\int_\Omega\eta\dd x\leq E_0. \label{eq:psi-L1}
\end{equation}
The weak equation also gives $F_x=\eta-\psi$.
On the half-line, its natural boundary condition is $F(0,t)=0$.
On the whole line, $F\in L^2(\mathbb R)$ and
$F_x\in L^1(\mathbb R)$, so the limits of $F$ at both infinities
are zero. Hence
\[
 F(x,t)=
 \begin{cases}
 \displaystyle\int_{-\infty}^x(\eta-\psi)\dd y,
     &\Omega=\mathbb R,\\[2mm]
 \displaystyle\int_0^x(\eta-\psi)\dd y,
     &\Omega=\mathbb R_+.
 \end{cases}
\]
By \eqref{eq:psi-L1},
$\int_\Omega\eta\dd x=\int_\Omega\psi\dd x\leq E_0$.
It follows that $\norm F_{L^\infty}\leq E_0$.
In the half-line case, the equality of the two integrals also gives
$F(x,t)\to0$ as $x\to\infty$.

For each admissible integer $N$,
$\int_N^{N+1}\psi\dd x\leq E_0$, so there exists
$x_N\in[N,N+1]$ such that $\psi(x_N,t)\leq E_0$.
Since $\psi_x=-vF$, for $x\in[N,N+1]$ we have
\[
 |\psi(x,t)-\psi(x_N,t)|
 \leq\norm F_{L^\infty}\int_N^{N+1}v\dd y
 \leq E_0M_0.
\]
Thus $\norm\psi_{L^\infty}\leq E_0(1+M_0)$.
Testing \eqref{eq:reswk} with $\psi$, we obtain
\[
 2\mathcal H(t)=\int_\Omega\eta\psi\dd x
 \leq E_0\norm\psi_{L^\infty}\leq C.
\]
To estimate the unweighted $L^2$ norm of $F$, we use
$\Phi(v)\geq\Phi(\frac12)$ on $\{v<\frac12\}$, so that
$|\{v<\frac12\}|\leq \frac{E_0}{\Phi(\frac12)}$. Consequently,
\[
 \int_\Omega F^2\dd x \leq2\int_\Omega vF^2\dd x +\norm F_{L^\infty}^2|\{v<\frac12\}|
 \leq4\mathcal H(t)+\frac{E_0^3}{\Phi(\frac12)}\leq C.
\]
This proves \eqref{eq:resinf}--\eqref{eq:resst}
with the stated dependence of the constants.

We next justify the time derivative in
\eqref{eq:rest}.
By \eqref{eq:finite} and the one-dimensional embedding,
\[
 (v^{-1})_t=-\frac{v_t}{v^2}\in L^2(0,T;L^\infty(\Omega)).
\]
The bounds for the strong solution on $[0,T]$ and
\eqref{eq:finite} also give
\[
\mathcal G_\beta\in L^2(0,T;L^2(\Omega)),\qquad
\frac{(\mathcal K_\beta(\theta))_x}{v}
=\frac{\theta^{\beta-1}(\theta-1)\theta_x}{v}
\in L^2(0,T;L^2(\Omega)),
\]
while \eqref{eq:ent-id} yields
\[
\mathcal D_\beta\in L^1(\Omega\times(0,T)).
\]
By \eqref{eq:ent} and the boundary conditions,
for every \(\varphi\in H^1(\Omega)\),
\[
\begin{aligned}
\langle\eta_t,\varphi\rangle
={}&-\int_\Omega
\left\{\mathcal G_\beta+
\frac{(\mathcal K_\beta(\theta))_x}{v}\right\}
\varphi_x\,\dd x
-\int_\Omega\mathcal D_\beta\varphi\,\dd x .
\end{aligned}
\]
Consequently,
\[
\|\eta_t\|_{\mathcal V^*}
\leq
\|\mathcal G_\beta\|_{L^2}
+\left\|\frac{(\mathcal K_\beta(\theta))_x}{v}\right\|_{L^2}
+C\|\mathcal D_\beta\|_{L^1},
\]
and hence
\[
v^{-1}\in W^{1,2}(0,T; L^\infty(\Omega)),
\qquad
\eta\in W^{1,1}(0,T; \mathcal V^*).
\]
Let $E_T\subset(0,T)$ be a set of full measure on which
\eqref{eq:reswk}, \eqref{eq:resst},
and \eqref{eq:ent} hold.
Fix $s,t\in E_T$ with $s<t$.
Subtracting \eqref{eq:reswk} at these two times gives
\[
 \int_\Omega\left\{ \frac{(\psi(t)-\psi(s))_x\varphi_x}{v(t)}
 +(\psi(t)-\psi(s))\varphi\right\}\dd x =\langle\eta(t)-\eta(s),\varphi\rangle
 -\int_\Omega\bigl(v^{-1}(t)-v^{-1}(s)\bigr) \psi_x(s)\varphi_x\dd x.
\]
Take $\varphi=\psi(t)-\psi(s)$.
The coercivity of $a(\phi,\varphi;t)$ on the fixed time interval gives
\begin{align*}
 C_T^{-1}\norm{\psi(t)-\psi(s)}_{\mathcal V}^2
 &\leq\left\{\norm{\eta(t)-\eta(s)}_{\mathcal V^*}
 +\norm{v^{-1}(t)-v^{-1}(s)}_{L^\infty}
 \norm{\psi_x(s)}_{L^2}\right\}
 \norm{\psi(t)-\psi(s)}_{\mathcal V}.
\end{align*}
The uniform bound for $\mathcal H$ and $v\leq C_T$ imply that
$\sup_{r\in E_T}\norm{\psi_x(r)}_{L^2}\leq C_T$.
Therefore,
\begin{align*}
 \norm{\psi(t)-\psi(s)}_{\mathcal V}
 &\leq C_T\left\{
 \norm{\eta(t)-\eta(s)}_{\mathcal V^*}
 +\norm{v^{-1}(t)-v^{-1}(s)}_{L^\infty}\right\}
 \\
 &\leq C_T\int_s^t\left\{
 \norm{\partial_r\eta(r)}_{\mathcal V^*}
 +\norm{\partial_r(v^{-1})(r)}_{L^\infty}\right\}\dd r.
\end{align*}
It follows that $t\mapsto\psi(t)$, initially defined on $E_T$,
has a unique extension to $W^{1,1}(0,T;H^1(\Omega))$, which we
continue to denote by $\psi$.
By the continuity in time of $v^{-1}$, $\eta$, and $\psi$,
\eqref{eq:reswk} holds for every $t\in[0,T]$.
The uniform bound for $\mathcal H$ extends by continuity as well.

Both sides of \eqref{eq:reswk} belong to
$W^{1,1}(0,T;\mathcal V^*)$.
Differentiating in time, we obtain, for almost every $t$ and every
$\varphi\in H^{ 1}(\Omega)$,
\[
 \int_\Omega\left(\frac{\psi_{xt}\varphi_x}{v}
 +\psi_t\varphi\right)\dd x
 =\langle\eta_t,\varphi\rangle
 +\int_\Omega\frac{u_x\psi_x\varphi_x}{v^2}\dd x.
\]
Taking $\varphi=\psi(t)$ and applying the product rule to
\eqref{eq:FH}, we obtain
\[
 \frac{\dd}{\dd t}\mathcal H =\int_\Omega\left(
 \frac{\psi_x\psi_{xt}}{v}+\psi\psi_t\right)\dd x
 -\frac12\int_\Omega\frac{u_x\psi_x^2}{v^2}\dd x =\langle\eta_t,\psi\rangle
 +\frac12\int_\Omega u_xF^2\dd x.
\]
Finally, we pair \eqref{eq:ent} with $\psi$.
On the half-line, both boundary terms at $x=0$ vanish because
\[
\theta_x(0,t)=0,\qquad \mathcal G_\beta(0,t)=0.
\]
At infinity, we pass to the limit using spatial cutoffs.
Since $\psi_x=-vF$,
\begin{align*}
 \langle\eta_t,\psi\rangle &
 =-\int_\Omega\frac{\left(\mathcal K_\beta(\theta)\right)_x}{v}\psi_x\dd x-\int_\Omega\mathcal G_\beta\psi_x\dd x
 -\int_\Omega\mathcal D_\beta\psi\dd x
 \\&=\int_\Omega F\left(\mathcal K_\beta(\theta)\right)_x\dd x+\int_\Omega vF\mathcal G_\beta\dd x
 -\int_\Omega\mathcal D_\beta\psi\dd x.
\end{align*}
This argument is valid on any fixed interval $[0,T]$.
The finite-time bounds are used to construct the elliptic solution
and justify differentiation in time, while the constants in
\eqref{eq:resinf}--\eqref{eq:resst}
are independent of $T$.
This proves \eqref{eq:rest}.
\end{proof}

Substituting \eqref{eq:G} into
\eqref{eq:rest}, we obtain
\begin{align}
 &\mathcal H_t-\int_\Omega F\bigl(\mathcal K_\beta(\theta)\bigr)_x\dd x+\int_\Omega\mathcal D_\beta\psi\dd x
 \notag\\&=\int_\Omega Fu\left(v-\theta-\frac12v|b|^2\right)\dd x
 +\int_\Omega\mu Fuu_x\dd x
 +\int_\Omega Fv\,w\cdot b\dd x\notag\\
 &\quad+\int_\Omega Fw\cdot w_x\dd x
 +\int_\Omega Fb\cdot b_x\dd x
  +\frac12\int_\Omega u_xF^2\dd x. \label{eq:res-flux}
\end{align}
Since $F_x=\eta-\psi$, integration by parts gives
\begin{align*}
 \int_\Omega F\bigl(\mathcal K_\beta(\theta)\bigr)_x\dd x
 &=-\int_\Omega(\eta-\psi)\mathcal K_\beta(\theta)\dd x,\\
 \int_\Omega Fw\cdot w_x\dd x
 &=-\frac12\int_\Omega(\eta-\psi)|w|^2\dd x,\\
 \int_\Omega Fb\cdot b_x\dd x
 &=-\frac12\int_\Omega(\eta-\psi)|b|^2\dd x.
\end{align*}
On the half-line, the boundary term at $x=0$ vanishes because
$F(0,t)=0$. At infinity, the calculation is justified by inserting
spatial cutoffs and then letting the cutoff radius tend to infinity.
By \eqref{eq:Phi},
\begin{align} \left|v-\theta-\frac12v|b|^2-\eta\right|^2&=\left| (v-1)-(\theta-1)-\frac12v|b|^2-\eta \right|^2
 \leq C\eta(1+\eta).        \label{eq:resrem}
\end{align}
Given $\varepsilon>0$, \eqref{eq:resinf} allows us
to choose $\Lambda>0$, independent of time, such that
$\frac{\norm\psi_{L^\infty}}{\Lambda}\leq\varepsilon$.
On $\{\eta<\Lambda\}$, we have $\Phi(\theta)\leq\eta<\Lambda$,
so $\theta$ lies in a compact subinterval of $(0,\infty)$ depending
only on $\Lambda$.
Hence $\mathcal K_\beta(\theta)\leq C_{\Lambda,\beta}$ there, and
\eqref{eq:res-L1} yields
\begin{equation}
\begin{aligned}
 \int_\Omega\mathcal K_\beta(\theta)\psi\dd x
 &\leq\frac{\norm\psi_{L^\infty}}{\Lambda}
 \int_{\{\eta\geq\Lambda\}}
 \mathcal K_\beta(\theta)\eta\dd x
 +C_{\Lambda,\beta}\int_{\{\eta<\Lambda\}}\psi\dd x\\
 &\leq\varepsilon\int_\Omega
 \mathcal K_\beta(\theta)\eta\dd x+C_\varepsilon.
\end{aligned}
\label{eq:Kpsi}
\end{equation}

\begin{proposition}\label{p:tlow}
Suppose that $v\geq c_1>0$ and $\mu\geq c_2>0$ on
$\Omega\times[0,T]$, where $c_1,c_2$ are independent of $T$.
For every fixed $\beta\geq0$, there exists $c>0$, depending also
on $c_1,c_2$ but independent of $T$, such that
\begin{equation}
 \theta(x,t)\geq c,
 \qquad x\in\Omega,\quad 0\leq t\leq T.                    \label{eq:t-low}
\end{equation}
\end{proposition}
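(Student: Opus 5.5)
The plan follows the outline in the introduction: combine the truncation estimate for $\theta^{-1}$ with the time-uniform structure supplied by the energy identity. Fix $p>2$ and a large constant $A_0$, and set
\[
 Y_p(t)=\norm{(\theta^{-1}(t)-A_0)_+}_{L^{p+1}(\Omega)}.
\]
Write $\vartheta=\theta^{-1}$. From \eqref{eq:temp}, $\vartheta$ satisfies
\[
 \vartheta_t=\left(\frac{\theta^\beta\theta_x}{v}\right)_x\!\bigl(-\theta^{-2}\bigr)
 +\frac{u_x}{v}\vartheta-\frac{\mu u_x^2+|w_x|^2+|b_x|^2}{v}\vartheta^2 .
\]
Multiply by $(\vartheta-A_0)_+^{p}$ and integrate. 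The diffusion term gives a nonnegative contribution after integration by parts; the boundary term vanishes by $\theta_x(0,t)=0$, and the contribution at infinity vanishes because $(\vartheta-A_0)_+$ has compact support in $x$ for $A_0>1$. The dissipation term $-\mu u_x^2\vartheta^2/v$ is good. Bound the bad term by Young's inequality:
\[
 \frac{u_x}{v}\vartheta\leq\frac{\mu u_x^2}{v}\vartheta^2+\frac{1}{4\mu v}\leq\frac{\mu u_x^2}{v}\vartheta^2+\frac{1}{4c_1c_2},
\]
using $v\geq c_1$ and $\mu\geq c_2$. Hence
\[
 \frac{1}{p+1}\frac{\dd}{\dd t}\int(\vartheta-A_0)_+^{p+1}\leq C\int(\vartheta-A_0)_+^{p}\leq C\,|\{\vartheta>A_0\}|^{\frac{1}{p+1}}\,Y_p^{p}.
\]
By \eqref{eq:cold}, the measure $|\{\vartheta>A_0\}|\leq|\{\theta<\tfrac12\}|$ is bounded by a constant independent of time, so $|\{\vartheta>A_0\}|^{1/(p+1)}\leq C$ uniformly in $p$. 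This yields $Y_p'(t)\leq C$ for a.e. $t$ with $Y_p(t)>0$, with $C$ independent of $p$, $T$, and $t$.

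The next step removes the linear growth in time. Let $\mathcal B=\{t\in[0,T]: D_\theta(t)>\delta\}$ with $\delta$ from Lemma~\ref{l:osc}. By \eqref{eq:ent-id}, $\int_0^T D_\theta\dd t\leq E_0\max\{1,\sup\mu\}$; actually only the $\theta$-part of $\mathcal D_\beta$ is needed, so $\delta|\mathcal B|\leq E_0$ and $|\mathcal B|\leq E_0/\delta$ independently of $T$. For a.e. $t\notin\mathcal B$, \eqref{eq:good-two} gives $\theta\geq c_\theta$. Choose
\[
 A_0\geq\max\bigl\{2,\ c_\theta^{-1},\ (\inf\theta_0)^{-1}\bigr\}.
\]
Then $Y_p(0)=0$ and $Y_p(t)=0$ for a.e. $t\notin\mathcal B$. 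Since $t\mapsto\int(\vartheta-A_0)_+^{p+1}$ is continuous on $[0,T]$ by the regularity in \eqref{eq:finite} and the finite-time positivity of $\theta$, the set $\{Y_p>0\}$ is relatively open. Take any $t$ with $Y_p(t)>0$ and let $(s,r)$ be its component, so that $Y_p(s)=0$. The component is contained in $\mathcal B$ up to a null set, so $t-s\leq|\mathcal B|$. Integrating the differential inequality on $(s,t)$ gives
\[
 Y_p(t)\leq C|\mathcal B|\leq C,
\]
uniformly in $p$ and $T$. A minor care point is the differentiability of $Y_p$ where it is positive: I would work with $Y_p^{p+1}$, which is absolutely continuous, and divide by $Y_p^p>0$.

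Finally, letting $p\to\infty$ gives $\norm{(\theta^{-1}-A_0)_+}_{L^\infty}\leq C$ for each $t$. This uses the uniform measure bound on the support, so that $L^{p+1}$ norms converge to the $L^\infty$ norm. Hence $\theta\geq(A_0+C)^{-1}$, which is \eqref{eq:t-low}.

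The main obstacle I expect is making the derivative bound independent of $p$. The factor $|\{\vartheta>A_0\}|^{1/(p+1)}$ must be controlled via \eqref{eq:cold}, and the $\frac{1}{p+1}$ normalization must come out exactly right. The rest is bookkeeping with the measure of $\mathcal B$.
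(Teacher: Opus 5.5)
Your proposal is correct and follows the paper's proof essentially step by step. Both arguments use the same ingredients: the $L^{p+1}$ truncation of $(\theta^{-1}-A_0)_+$, Young absorption using $v\geq c_1$ and $\mu\geq c_2$, the $p$-uniform H\"older step via \eqref{eq:cold}, and the connected-component argument on $\{Y_p>0\}$ with $|\mathcal B|\leq E_0/\delta$. The only differences are cosmetic: you use \eqref{eq:good-two} and put $c_\theta^{-1}$ into $A_0$ where the paper uses \eqref{eq:osclin} with $m_\theta/2$, and you absorb the whole dissipation term rather than half of it.
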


\begin{proof} Set
\begin{equation}
 A_0=\max\left\{2,\frac{2}{m_\theta},
 \norm{\theta_0^{-1}}_{L^\infty(\Omega)}\right\},
 \qquad
 Z=(\theta^{-1}-A_0)_+.                              \label{eq:invcut}
\end{equation}
Fix $p>2$. Multiplying \eqref{eq:temp} by
$-\theta^{-2}Z^p$ (cf.\ \cite{CaoPengSun2021,LuShiXiong2021,TongWangZhang2026})
and integrating over $\Omega$, we use the chain
rule for the positive part and integrate the heat conduction term by
parts. The boundary terms vanish under the prescribed boundary
conditions, and we obtain
\begin{align}
 &\frac1{p+1}\frac{\dd}{\dd t}\int_\Omega Z^{p+1}\dd x
 +\int_\Omega\frac{\theta^\beta\theta_x^2}{v}
 \left(2\theta^{-3}Z^p+p\theta^{-4}Z^{p-1}\right)\dd x
 \notag\\
 &\quad+\int_\Omega
 \frac{\mu u_x^2+|w_x|^2+|b_x|^2}{v\theta^2}Z^p\dd x
 =\int_\Omega\frac{u_x}{v\theta}Z^p\dd x.          \label{eq:inv-id}
\end{align}
Young's inequality gives
\begin{align*}
 \left|\int_\Omega\frac{u_x}{v\theta}Z^p\dd x\right|
 &\leq\frac12\int_\Omega\frac{\mu u_x^2}{v\theta^2}Z^p\dd x
 +\frac1{2c_2}\int_\Omega\frac{Z^p}{v}\dd x\\
 &\leq\frac12\int_\Omega\frac{\mu u_x^2}{v\theta^2}Z^p\dd x
 +C\int_\Omega Z^p\dd x.
\end{align*}
Absorbing the first term into the left-hand side of
\eqref{eq:inv-id} and dropping the remaining
nonnegative terms, we obtain
\begin{equation}
 \frac1{p+1}\frac{\dd}{\dd t}\int_\Omega Z^{p+1}\dd x
 \leq C\int_\Omega Z^p\dd x.                       \label{eq:inv-Lp}
\end{equation}

Since $A_0\geq2$,
\[
 \{x\in\Omega:Z(x,t)>0\}
 \subset\{x\in\Omega:\theta(x,t)<\frac12\}.
\]
Thus, \eqref{eq:cold} and H\"older's inequality give
\begin{equation}
 \int_\Omega Z^p\dd x
 \leq C\left(\int_\Omega Z^{p+1}\dd x\right)^{\frac p{p+1}},
                                                               \label{eq:invH}
\end{equation}
where $C$ is independent of $p$. Define
\[
 X_p(t)=\int_\Omega Z^{p+1}(x,t)\dd x,
 \qquad
 Y_p(t)=X_p(t)^{\frac1{p+1}}
 =\norm{Z(t)}_{L^{p+1}(\Omega)}.
\]
The finite-time regularity of the strong solution and the chain rule
for the positive part imply that $X_p$ is absolutely continuous on
every finite time interval. Hence $Y_p$ is continuous, and the set
\[
 \mathcal O_p=\{0\leq t<T:Y_p(t)>0\}
\]
is open. On every compact subinterval of $\mathcal O_p$, $X_p$ is
bounded away from zero, so $Y_p$ is absolutely continuous there.
Using \eqref{eq:inv-Lp},
\eqref{eq:invH}, and the chain rule, we obtain
\begin{equation}
 Y_p'(t)
 =\frac1{p+1}X_p(t)^{-\frac p{p+1}}X_p'(t)
 \leq C                                                \label{eq:Ypdt}
\end{equation}
for almost every $t\in\mathcal O_p$, where $C$ is independent of $p$
and of the finite time interval under consideration.

With $\delta$ as in Lemma~\ref{l:osc}, set
\[
 \mathcal B  =\left\{0\leq t<T:
 \int_\Omega\frac{\theta^{\beta-2}\theta_x^2}{v}\dd x>\delta
 \right\}.
\]
By 
\eqref{eq:ent-id} and Chebyshev's inequality, we have
\[
 |\mathcal B  |\leq\frac{E_0}{\delta}.
\]  
Moreover, the choice of $\delta$ and   \eqref{eq:osclin} imply that 
for almost every $t\notin\mathcal B  $, 
\begin{equation} 
 \inf_{x\in\Omega}\theta(x,t)\geq\frac{m_\theta}{2},
                                                        \label{eq:good}
\end{equation}  
which together with \eqref{eq:invcut} implies
that $Z(\cdot,t)=0$ at these times. Consequently,
\[
 |\mathcal O_p\setminus\mathcal B  |=0.
\]

Fix $t\in[0,T)$. If $Y_p(t)=0$, there is nothing to prove.
Otherwise, let $(s,r)$ be the connected component of $\mathcal O_p$
containing $t$. The choice
$A_0\geq\norm{\theta_0^{-1}}_{L^\infty}$ in
\eqref{eq:invcut} gives $Y_p(0)=0$.
By continuity, $Y_p(s)=0$.
For $0<\varepsilon<t-s$, integrating \eqref{eq:Ypdt}
over $[s+\varepsilon,t]$ yields
\[
 Y_p(t)\leq Y_p(s+\varepsilon)+C(t-s-\varepsilon).
\]
Letting $\varepsilon\to0^+$, we obtain
\[
 Y_p(t)\leq C(t-s).
\]
Since $(s,t)\subset\mathcal O_p$ and
$|\mathcal O_p\setminus\mathcal B  |=0$,
\[
 t-s=|\mathcal B  \cap(s,t)|\leq|\mathcal B  |.
\]
It follows that
\[
 Y_p(t)\leq\frac{CE_0}{\delta},
\]
with a constant independent of $p$ and $t$.
Letting $p\to\infty$, we obtain
\[
 \norm{Z(t)}_{L^\infty(\Omega)}\leq C,
 \qquad 0\leq t<T.
\]
Finally, \eqref{eq:invcut} gives
\[
 \theta^{-1}(x,t)\leq A_0+C,
 \qquad (x,t)\in\Omega\times[0,T).
\]
Continuity at $T$ gives \eqref{eq:t-low}.
\end{proof}

\subsection{Density-dependent viscosity}\label{sec:density}

Assume $\eqref{eq:viscosity}_1$.
We next derive a uniform positive lower bound for the specific volume.
Define
\begin{equation}
 M(v)=\int_1^v\frac{\mu(z)}z\dd z
 =\mu_1\log v+\frac{\mu_2}{\alpha}(1-v^{-\alpha}),
 \qquad M'(v)=\frac{\mu(v)}v>0. \label{eq:Mv}
\end{equation}
Set
\begin{equation}
 \sigma=\frac{\mu(v)u_x}{v}-\frac{\theta}{v}-\frac12|b|^2.
 \label{eq:sigma}
\end{equation}
Then $u_t=\sigma_x$ and $M(v)_t=\sigma+\frac{\theta}{v}+\frac{1}{2}|b|^2$.

\begin{lemma}\label{l:Kaz}
For an admissible integer $N$ and $x\in[N,N+1]$, define
\begin{align}
 U_N(x,t)&=\int_N^x u(y,t)\dd y
 -\int_N^{N+1}\int_N^z u(y,t)\dd y\dd z,\notag\\
 Z_N(x,t)&=\exp\left\{\frac{M(v(x,t))-U_N(x,t)}{\mu_1}\right\},
 \qquad \overline Z_N(t)=\int_N^{N+1}Z_N(y,t)\dd y.
 \label{eq:Zdef}
\end{align}
There are positive constants independent of $N$ and $T$ such that
\begin{equation}
 |U_N|\leq C,\qquad \frac{Z_N}{v}\leq C,\qquad
 c\leq\overline Z_N\leq C,\qquad
 \int_N^{N+1}\frac{Z_N}{v}\dd x\geq c. \label{eq:Zbounds}
\end{equation}
Moreover,
\begin{equation}
 0\leq\frac1{\overline Z_N}
 \int_N^{N+1}Z_N\left(\frac{\theta}{v}+\frac12|b|^2\right)\dd y
 \leq C, \label{eq:Zmean}
\end{equation}
and
\begin{equation}
 \partial_t\log\frac{Z_N(x,t)}{\overline Z_N(t)}
 =\frac1{\mu_1}\left\{\frac{\theta}{v}+\frac12|b|^2
 -\frac1{\overline Z_N}\int_N^{N+1}
 Z_N\left(\frac{\theta}{v}+\frac12|b|^2\right)\dd y\right\}.
 \label{eq:Z-evol}
\end{equation}
\end{lemma}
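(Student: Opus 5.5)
We bound $U_N$ first, then $Z_N$, and finally verify the evolution identity \eqref{eq:Z-evol} by direct differentiation.

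\emph{Bound on $U_N$.} For $x\in[N,N+1]$, Cauchy--Schwarz gives $|\int_N^x u\dd y|\leq\norm{u}_{L^2(N,N+1)}$, and the double integral obeys the same bound. Since $u^2\leq2\eta$, \eqref{eq:ent-id} yields $|U_N|\leq2E_0^{1/2}$.

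\emph{Pointwise bound on $Z_N/v$.} From \eqref{eq:Mv},
\[
\frac{M(v)}{\mu_1}=\log v+\frac{\mu_2}{\alpha\mu_1}(1-v^{-\alpha})\leq\log v+\frac{\mu_2}{\alpha\mu_1},
\]
so $Z_N\leq C v$ and hence $Z_N/v\leq C$. This also gives $\overline Z_N\leq C\int_N^{N+1}v\dd x\leq CM_0$ by \eqref{eq:cell}.

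\emph{Lower bounds.} This is the step that needs care, because $M(v)\to-\infty$ very fast as $v\to0$ when $\mu_2>0$. We only need it on a set of positive measure where $v$ is comparable to $1$. Since $\int_N^{N+1}\Phi(v)\leq E_0$, Chebyshev's inequality gives $|\{x\in[N,N+1]:\Phi(v)>4E_0\}|\leq\frac14$. Hence the set $S_N=\{x\in[N,N+1]:\Phi(v)\leq4E_0\}$ has measure at least $\frac34$. On $S_N$, $v$ lies in a fixed compact subinterval $[a,A]\subset(0,\infty)$ depending only on $E_0$. So on $S_N$ both $M(v)$ and $|U_N|$ are bounded, which gives $Z_N\geq c$ and $Z_N/v\geq c/A$. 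Integrating over $S_N$ yields $\overline Z_N\geq\frac34c$ and $\int_N^{N+1}Z_N/v\dd x\geq c$. This proves \eqref{eq:Zbounds}.

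\emph{Weighted mean \eqref{eq:Zmean}.} Nonnegativity is clear. For the upper bound, use $Z_N/v\leq C$, $Z_N\leq Cv$ and $\overline Z_N\geq c$:
\[
\frac1{\overline Z_N}\int_N^{N+1}Z_N\left(\frac\theta v+\frac12|b|^2\right)\leq C\int_N^{N+1}\theta\dd y+C\int_N^{N+1}v|b|^2\dd y\leq C,
\]
by \eqref{eq:cell} and $v|b|^2\leq2\eta$ together with \eqref{eq:ent-id}.

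\emph{Evolution identity \eqref{eq:Z-evol}.} Compute $\partial_t(M(v)-U_N)$. We have $M(v)_t=\frac{\mu u_x}{v}=\sigma+\frac\theta v+\frac12|b|^2$. Using $u_t=\sigma_x$,
\[
\partial_t\int_N^x u\dd y=\sigma(x)-\sigma(N),
\]
so
\[
\partial_tU_N=\sigma(x)-\sigma(N)-\int_N^{N+1}(\sigma(z)-\sigma(N))\dd z=\sigma(x)-\int_N^{N+1}\sigma\dd z.
\]
Therefore
\[
\mu_1\partial_t\log Z_N=\frac\theta v+\frac12|b|^2+\bar\sigma,\qquad \bar\sigma=\int_N^{N+1}\sigma\dd z,
\]
which depends only on $t$. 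It follows that $\partial_t Z_N=\mu_1^{-1}Z_N\bigl(\frac\theta v+\frac12|b|^2+\bar\sigma\bigr)$. Integrating over $[N,N+1]$ gives
\[
\frac{\overline Z_N'}{\overline Z_N}=\frac1{\mu_1}\left(\frac1{\overline Z_N}\int_N^{N+1}Z_N\Bigl(\frac\theta v+\frac12|b|^2\Bigr)\dd y+\bar\sigma\right).
\]
Subtracting, $\bar\sigma$ cancels and \eqref{eq:Z-evol} follows. These manipulations are justified for a.e. $t$ using the regularity \eqref{eq:finite} and the finite-time bounds on $v$. Note that $\sigma(\cdot,t)$ is continuous for a.e. $t$, since $u_x,\theta,b\in H^1$ for a.e. $t$.
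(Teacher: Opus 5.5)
Your proposal is correct and follows the paper's proof essentially step by step. It uses the $L^2$ bound on $u$ from \eqref{eq:ent-id} for $U_N$, the estimate $\exp\{M(z)/\mu_1\}\leq Cz$ for the upper bounds, and a measure argument on a set where $v$ is bounded away from zero for the lower bounds. It also uses the cancellation of $\int_N^{N+1}\sigma\,\mathrm dy$ when subtracting the logarithmic derivatives of $Z_N$ and $\overline Z_N$. The only differences are cosmetic: you use the single sublevel set $\{\Phi(v)\leq4E_0\}$ for both lower bounds, where the paper uses $\{v\geq m_0/2\}$ and $\{v\geq a\}$ separately, and your constant for $|U_N|$ should read $2(2E_0)^{1/2}$, since $u^2\leq2\eta$.
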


\begin{proof}
The bound for $U_N$ follows from \eqref{eq:ent-id} and the unit
length of $[N,N+1]$. Integrating $u_t=\sigma_x$ over the cell gives
\[
 (U_N)_t=\sigma-\int_N^{N+1}\sigma\dd x,
 \qquad
 (M(v)-U_N)_t=\frac{\theta}{v}+\frac12|b|^2+\int_N^{N+1}\sigma\dd x.
\]
The endpoint stress cancels in the first identity, also for the cell
adjacent to $x=0$. By \eqref{eq:Zdef},
\begin{align*}
 \frac{(Z_N)_t}{Z_N}
 &=\frac1{\mu_1}\left\{\frac{\theta}{v}+\frac12|b|^2
   +\int_N^{N+1}\sigma(y,t)\dd y\right\},\\
 \frac{(\overline Z_N)_t}{\overline Z_N}
 &=\frac1{\mu_1}\left\{\frac1{\overline Z_N}
   \int_N^{N+1}Z_N\left(\frac{\theta}{v}+\frac12|b|^2\right)\dd y
   +\int_N^{N+1}\sigma(y,t)\dd y\right\}.
\end{align*}
Subtracting these identities gives \eqref{eq:Z-evol}.

To prove \eqref{eq:Zbounds}, observe that
\begin{equation}
 \exp\left\{\frac{M(z)}{\mu_1}\right\}
 =z\exp\left\{\frac{\mu_2}{\alpha\mu_1}(1-z^{-\alpha})\right\}
 \leq Cz,\qquad z>0. \label{eq:fM}
\end{equation}
For every fixed $a>0$, we also have $\exp\left\{\frac{M(z)}{\mu_1}\right\}\geq c_a z$ for
$z\geq a$. Hence $\frac{Z_N}{v}\leq C$ and $\overline Z_N\leq C$.
Since \eqref{eq:cell} implies
\[
 \int_{[N,N+1]\cap\{v\geq \frac{m_0}{2}\}}v\dd x\geq\frac{m_0}{2},
\]
we obtain $\overline Z_N\geq c$. Choose $a\in(0,1)$ so small
that $\Phi(a)>2E_0+1$. The entropy bound then gives
\[
 |[N,N+1]\cap\{v<a\}|\leq\frac{E_0}{\Phi(a)}<\frac12.
\]
On the complementary set, $\frac{Z_N}{v}\geq c_a$, which proves the
last inequality in \eqref{eq:Zbounds}. Finally,
\[
 \frac1{\overline Z_N}\int_N^{N+1}
 Z_N\left(\frac{\theta}{v}+\frac12|b|^2\right)\dd y
 \leq C\int_N^{N+1}(\theta+v|b|^2)\dd x\leq C
\]
by \eqref{eq:cell} and \eqref{eq:ent-id}.

All identities above hold for the local strong solution.
Indeed, $v_t=u_x\in L^2(0,T;H^1)$ and
$\sigma+1\in L^2(0,T;H^1)$ under the temporary state bounds.
Thus the cell functions are absolutely continuous in time with
values in $C([N,N+1])$, and the endpoint traces and differentiations
used above are well defined.
\end{proof}

\begin{proposition}\label{p:vlow}
There exists a constant $c>0$, independent of $T$, such that
\begin{equation}
 v(x,t)\geq c,\qquad x\in\Omega,\quad 0\leq t\leq T.
 \label{eq:vlow}
\end{equation}
\end{proposition}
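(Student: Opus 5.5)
The plan is to work cell by cell. Fix an admissible $N$ and a point $x\in[N,N+1]$, and study $L(t)=\log\frac{Z_N(x,t)}{\overline Z_N(t)}$ through the evolution identity \eqref{eq:Z-evol}. By \eqref{eq:Zbounds}, $|U_N|\le C$ and $c\le\overline Z_N\le C$. By \eqref{eq:fM}, $Z_N\le Cv$ always, and conversely $v$ small forces $Z_N$ small. So a lower bound on $L$ gives a lower bound on $\exp\{M(v)/\mu_1\}$. Since $\exp\{M(z)/\mu_1\}=z\exp\{\frac{\mu_2}{\alpha\mu_1}(1-z^{-\alpha})\}$ tends to $0$ as $z\to0$ and is monotone, this yields $v\ge c$. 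It therefore suffices to show $L(t)\ge -C$ with $C$ independent of $N$, $x$, $t$, $T$.

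Step one is the initial bound. At $t=0$, $v_0\ge\inf v_0>0$ and the $U_N$ bound give $L(0)\ge -C_0$.

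Step two is a differential inequality. Let $K_1$ be the constant in \eqref{eq:Zmean}, so the weighted mean in \eqref{eq:Z-evol} is at most $K_1$. Since $\theta/v\ge0$ and $|b|^2\ge0$, we get $\partial_t L\ge -K_1/\mu_1$ for all $t$.

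Step three uses the good set. For a.e.\ $t\notin\mathcal B$ (with $\delta$ from Lemma~\ref{l:osc}), \eqref{eq:good-two} gives $\theta\ge c_\theta$, hence $\partial_t L\ge \frac1{\mu_1}\bigl(\frac{c_\theta}{v}-K_1\bigr)$. Choose $\varepsilon_0>0$ so small that $c_\theta/\varepsilon_0>K_1$. Then $\partial_t L\ge0$ whenever $t\notin\mathcal B$ and $v(x,t)\le\varepsilon_0$. Pick a level $\ell$ low enough that $L(t)\le\ell$ forces $v(x,t)\le\varepsilon_0$; this is possible since $Z_N\ge c\,\exp\{M(v)/\mu_1\}$ with $c$ uniform.

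Step four is the barrier argument. Suppose $L(t)<\ell'$, where $\ell'=\min\{\ell,-C_0\}$. Let $s=\sup\{r<t: L(r)\ge\ell'\}$, which exists since $L(0)\ge\ell'$ and $L$ is continuous. On $(s,t)$ we have $L<\ell$, so $\partial_tL\ge0$ off $\mathcal B$ and $\partial_tL\ge -K_1/\mu_1$ on $\mathcal B$. Integrating and using $|\mathcal B|\le E_0/\delta$ (Chebyshev applied to \eqref{eq:ent-id}) gives
\[
L(t)\ge \ell'-\frac{K_1E_0}{\mu_1\delta}.
\]
This bound is uniform, and the conclusion \eqref{eq:vlow} follows.

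The main obstacle is regularity in time. Pointwise in $x$, $L$ must be absolutely continuous in $t$ so that \eqref{eq:Z-evol} can be integrated, and \eqref{eq:good-two} must be applied for a.e.\ $t$ uniformly in $x$. Lemma~\ref{l:Kaz} already asserts that the cell functions are absolutely continuous in time with values in $C([N,N+1])$, so this step should only need careful bookkeeping of null sets rather than any new estimate.
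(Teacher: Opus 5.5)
Your proposal is correct and follows essentially the same argument as the paper. Like the paper, you run a barrier argument on $\log(Z_N/\overline Z_N)$ using \eqref{eq:Z-evol}: the derivative is nonnegative off $\mathcal B$ once $v$ is small, it is bounded below by a constant on $\mathcal B$, and $|\mathcal B|\le E_0/\delta$. Your uniform threshold $\min\{\ell,-C_0\}$ and the last entry time $s$ play the same role as the paper's connected component of $\{R_N<\min(-L,R_N(0))\}$.
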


\begin{proof}
Let $\delta$ be as in \eqref{eq:good-two} and put
\[
 \mathcal B=\{0\leq t<T_*:D_\theta(t)>\delta\}.
\]
The entropy estimate, applied on every interval of existence, gives
\begin{equation}
 |\mathcal B|+\int_{\mathcal B}D_\theta(t)\dd t\leq C.
 \label{eq:bad-measure}
\end{equation}
For fixed $N$ and $x\in [N,N+1]$, set
$R_N(t)=\log\frac{Z_N(x,t)}{\overline Z_N(t)}$.
By \eqref{eq:Zbounds}, the initial value of $R_N$ is bounded
below independently of $N,x$. If $R_N<-L$, then
$M(v(x,t))\leq C-\mu_1L$, by \eqref{eq:Zdef} and
\eqref{eq:Zbounds}. Since $M$ is strictly increasing and
$M(v)\to-\infty$ as $v\to0$, \eqref{eq:Zmean} allows us to
choose a fixed $L$ so large that, at such points,
\[
 \frac{c_\theta}{v(x,t)}
 \geq\frac1{\overline Z_N}\int_N^{N+1}
 Z_N\left(\frac{\theta}{v}+\frac12|b|^2\right)\dd y.
\]
It follows from \eqref{eq:Z-evol} and \eqref{eq:good-two} that
$R_N'\geq0$ whenever $R_N<-L$ and $t\notin\mathcal B$,
whereas $R_N'\geq-C$ at all other times.
Integrating on a connected component of
$\{R_N<\min(-L,R_N(0))\}$ therefore yields
\[
 R_N(t)\geq\min\{-L,R_N(0)\}-C|\mathcal B|\geq-C.
\]
Consequently $Z_N\geq c$ and $M(v)\geq-C$, by
\eqref{eq:Zdef} and \eqref{eq:Zbounds}.
The properties of $M$ in \eqref{eq:Mv} now give \eqref{eq:vlow}.
\end{proof}

Since $\mu\geq\mu_1$, Proposition~\ref{p:vlow} and
Proposition~\ref{p:tlow} give \eqref{eq:t-low}.

\begin{lemma}\label{l:heat-conv}
For $0\leq s<t\leq T$, we have
\begin{equation}
 \int_s^t\!\int_\Omega\frac{\mu(v)u_x^2}{v}\dd x\dd r
 \leq C+C\int_s^t\!\int_\Omega\mathcal K_\beta(\theta)\eta\dd x\dd r.
 \label{eq:heat-conv}
\end{equation}
\end{lemma}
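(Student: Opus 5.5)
The plan is to run the mechanical energy identity for the longitudinal velocity and use the dissipation in \eqref{eq:ent-id} to control everything except the high-temperature part of the pressure work, which is then charged to $\int\!\!\int\mathcal K_\beta(\theta)\eta$. The positive lower bounds for $v$ and $\theta$ from Proposition~\ref{p:vlow} and Proposition~\ref{p:tlow} will be used throughout. I have not closed the time-uniform control of the $L^2$ parts of the mechanical work, which is the main obstacle described below.

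\textbf{Step 1 (energy identity).} Multiplying \eqref{eq:mom} by $u$ and integrating over $\Omega\times(s,t)$, the boundary terms vanish as in Lemma~\ref{l:ent} (they are handled by a spatial cutoff at infinity). This gives
\[
 \frac12\int_\Omega u^2(t)\dd x+\int_s^t\!\int_\Omega\frac{\mu u_x^2}{v}\dd x\dd r
 =\frac12\int_\Omega u^2(s)\dd x+\int_s^t\!\int_\Omega\Bigl(\frac{\theta}{v}+\frac12|b|^2\Bigr)u_x\dd x\dd r .
\]
The first term on the right is at most $E_0$.

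\textbf{Step 2 (removing the constant part of the pressure).} Since $\int_\Omega u_x\dd x=0$, we can write
\[
 \frac{\theta}{v}u_x=\frac{\theta-1}{v}u_x+\Bigl(\frac1v-1\Bigr)v_t .
\]
The last term equals $-\partial_t\Phi(v)$, since $\Phi(v)_t=(1-\frac1v)v_t$. Its time integral is therefore bounded by $2E_0$ via \eqref{eq:ent-id}.

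\textbf{Step 3 (low temperatures).} On $\{\theta\leq2\}$ we have $\theta\leq2$ and $v\geq c$, so Young's inequality gives
\[
 \Bigl|\frac{\theta-1}{v}u_x\Bigr|\leq \frac{\mu_1}{4}\frac{u_x^2}{v}+C(\theta-1)^2 .
\]
Alternatively, one can use directly that $\frac{\mu u_x^2}{v}\leq 2\frac{\mu u_x^2}{v\theta}$ on $\{\theta\leq2\}$. The latter is the form I would use, since its space-time integral is bounded by $2E_0$ from $\mathcal D_\beta$ in \eqref{eq:ent-id}. Concretely, I would first split the left-hand dissipation itself into the regions $\{\theta\leq2\}$ and $\{\theta>2\}$, and apply the identity of Step 1 only to control the high-temperature portion.

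\textbf{Step 4 (high temperatures).} On $\{\theta>2\}$ we have $(\theta-1)^2\leq C\Phi(\theta)^2\leq C\mathcal K_\beta(\theta)\eta$, because $\mathcal K_\beta\geq\Phi$ for $\theta\geq1$ by Remark~\ref{remark:k} and $\Phi(\theta)\leq\eta$. Young's inequality then gives
\[
 \Bigl|\frac{\theta-1}{v}u_x\Bigr|\leq\frac{\mu_1}{4}\frac{u_x^2}{v}+C\mathcal K_\beta(\theta)\eta ,
\]
which produces exactly the right-hand side of \eqref{eq:heat-conv}.

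\textbf{Step 5 (magnetic pressure).} The term $\frac12|b|^2u_x$ is split as $\frac{\mu_1}{4}\frac{u_x^2}{v}+C|b|^4$, using $v\leq C$ on the region where needed, or more simply $v\geq c$. I would then bound $|b|^4\leq C\,v|b|^2\cdot|b|^2$. On $\{\theta\geq1\}$ we have $|b|^2\leq C\eta$ and $\mathcal K_\beta(\theta)\geq\Phi(\theta)$, while on $\{\theta<1\}$ the dissipation $|b_x|^2/(v\theta)\geq c|b_x|^2$ is available.

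For the remaining piece I would use \eqref{eq:GN} in the form
\[
 \int|b|^4\dd x\leq\norm b_{L^2}^3\norm{b_x}_{L^2}\leq C\norm{b_x}_{L^2}.
\]
This is not time-integrable by itself. So I would instead write
\[
 |b|^4\leq \norm b_{L^\infty}^2|b|^2,\qquad
 \norm b_{L^\infty}^2\leq 2\norm b_{L^2}\norm{b_x}_{L^2},
\]
and weight by $\theta$, namely $\int|b_x|^2\leq\norm{\theta}_{L^\infty}\int\frac{|b_x|^2}{v\theta}\cdot C$. The factor $\norm\theta_{L^\infty}$ would then be absorbed through $\mathcal K_\beta(\theta)\eta$ by \eqref{eq:vtb}. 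If this route fails, I would instead absorb $\frac12|b|^2u_x$ through the elliptic functional $\mathcal H$ of Lemma~\ref{l:res}, where magnetic-pressure terms already appear in \eqref{eq:res-flux}.

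\textbf{Main obstacle.} The hardest part is making every remainder time-integrable uniformly in $t-s$, so that the constant is $C$ and not $C(1+t-s)$. The two problem remainders are the $L^2$-type terms $(\theta-1)^2$ at moderate temperatures and the term $|b|^4$. My first attempt is to route both through the dissipation in \eqref{eq:ent-id} on the low-temperature set and through $\mathcal K_\beta(\theta)\eta$ on the high-temperature set, as described in Steps 3--5. The remaining quadratic terms are then absorbed into the left-hand side using $\mu\geq\mu_1$.
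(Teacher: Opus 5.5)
You have located the obstacle correctly, but it is a genuine gap: the momentum-energy route cannot give a constant independent of $t-s$. Two terms fail.

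First, testing only \eqref{eq:mom} with $u$ leaves the magnetic pressure work $\int_s^t\!\int_\Omega\frac12|b|^2u_x$. Your bound via $\int_\Omega|b|^4\leq C\norm{b_x}_{L^2}$ needs $\int_s^t\norm{b_x}_{L^2}\dd r\leq\sqrt{t-s}\,(\int_s^t\!\int_\Omega|b_x|^2)^{1/2}$. Even granting $\int\!\!\int|b_x|^2$, this grows like $\sqrt{t-s}$. But that integral is not available here anyway: \eqref{eq:ent-id} only controls $\int\!\!\int\frac{|b_x|^2}{v\theta}$, and no upper bound for $\theta$ is known at this stage. Moreover, \eqref{eq:vtb} is a pointwise bound for $v\theta|b|^2$ and gives no control of $\norm{\theta}_{L^\infty}$. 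Routing the term through $\mathcal H$ instead would also leave a remainder of size $C(1+t-s)$. This term is removable, though: in the full mechanical identity \eqref{eq:mech-v} it cancels against $b\cdot(vb)_t=\frac12(v|b|^2)_t+\frac12|b|^2u_x$.

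The second failure is not removable within your approach. The pressure work $\frac{(\theta-1)u_x}{v}$ on $\{\theta\leq2\}$ leaves, after Young's inequality, a remainder of size $(\theta-1)^2\mathbf 1_{\{\theta\leq2\}}\leq C\Phi(\theta)$. Its time integral is only bounded by $CE_0(t-s)$. It cannot be charged to $\mathcal K_\beta(\theta)\eta$ either, since that quantity may be of order $(\theta-1)^4$ near $\theta=1$. Splitting the left-hand dissipation and applying the identity ``only to the hot part'' is not possible. The reason is that \eqref{eq:mech-v} is a global identity whose right-hand side contains the pressure work over all of $\Omega$. Localizing it with a cutoff in $\theta$ would produce $\theta_t$-terms and send you back to the temperature equation. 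So your argument closes only to $C(1+t-s)+C\int_s^t\!\int_\Omega\mathcal K_\beta(\theta)\eta$. Incidentally, that would still suffice for \eqref{eq:Keta-int}, but it is weaker than the statement.

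The missing idea is to extract the hot part of the viscous dissipation from the temperature equation rather than from the momentum equation. Test \eqref{eq:temp} with $\chi(\theta)=(1-\frac2\theta)_+$, and let $H'=\chi$ with $H=0$ on $(0,2]$, so that $0\leq H\leq C\Phi$. Then $\int_\Omega\frac{\mu u_x^2+|w_x|^2+|b_x|^2}{v}\chi(\theta)\dd x$ equals
\[
\frac{\dd}{\dd t}\int_\Omega H(\theta)\dd x+2\int_{\{\theta>2\}}\frac{\theta^{\beta-2}\theta_x^2}{v}\dd x+\int_\Omega\frac{\theta u_x}{v}\chi(\theta)\dd x .
\]
After time integration, each piece is controlled:
\begin{itemize}
\item the $H$-term is at most $CE_0$;
\item the flux term is entropy dissipation;
\item the magnetic heating has the good sign and can be dropped, and no magnetic pressure appears at all;
\item the pressure work is supported in $\{\theta>2\}$, where $\theta^2\leq C\Phi(\theta)^2\leq C\mathcal K_\beta(\theta)\eta$, so Young's inequality with $\mu\geq\mu_1$ and $v\geq c$ produces exactly the right-hand side of \eqref{eq:heat-conv}.
\end{itemize}
Since $\chi\geq\frac12$ on $\{\theta>4\}$, this controls the dissipation there. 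On $\{\theta\leq4\}$ one uses $\frac{\mu u_x^2}{v}\leq4\frac{\mu u_x^2}{v\theta}$ and \eqref{eq:ent-id}, which is the correct part of your Step~3.
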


\begin{proof}
Set $\chi(\xi)=\left(1-\frac{2}{\xi}\right)_+$ and
\[
 H(\xi)=
 \begin{cases}
 0,&\xi\leq2,\\
 \xi-2-2\log\frac{\xi}{2},&\xi>2.
 \end{cases}
\]
Then $H'=\chi$ and $0\leq H\leq C\Phi$.
Multiplying \eqref{eq:temp} by $\chi(\theta)$ and integrating
by parts, we obtain
\begin{align*}
 \int_\Omega\frac{\mu(v)u_x^2+|w_x|^2+|b_x|^2}{v}\chi(\theta)\dd x
 &=\frac{\dd}{\dd t}\int_\Omega H(\theta)\dd x
 +2\int_{\{\theta>2\}}\frac{\theta^{\beta-2}\theta_x^2}{v}\dd x\\
 &\quad+\int_\Omega\frac{\theta u_x}{v}\chi(\theta)\dd x.
\end{align*}
The pressure term satisfies
\[
 \left|\int_\Omega\frac{\theta u_x}{v}\chi(\theta)\dd x\right|
 \leq\frac12\int_\Omega\frac{\mu(v)u_x^2}{v}\chi(\theta)\dd x
 +C\int_\Omega\mathcal K_\beta(\theta)\eta\dd x.
\]
Here we used $\mu(v)\geq\mu_1$, \eqref{eq:vlow}, and
$\theta^2\leq C\Phi(\theta)^2\leq C\mathcal K_\beta(\theta)\eta$
on $\{\theta>2\}$. Integrating in time and using
\eqref{eq:ent-id}, we control the integral in \eqref{eq:heat-conv}
over $\{\theta>4\}$, where $\chi\geq\frac{1}{2}$.
On $\{\theta\leq4\}$ it is bounded directly by four times
the entropy dissipation. This proves \eqref{eq:heat-conv}.
\end{proof}
\begin{proposition}\label{p:bcap}
For all $0\leq s<t\leq T$,
\begin{equation}
 \int_s^t\!\int_\Omega v\theta|b|^2\dd x\dd r
 \leq C(1+t-s).                                      \label{eq:b-cap}
\end{equation}
\end{proposition}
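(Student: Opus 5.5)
By \eqref{eq:vtb}, it suffices to prove
\[
 \int_s^t\!\int_\Omega\mathcal K_\beta(\theta)\eta\dd x\dd r\leq C(1+t-s),
\]
because $\int_\Omega\eta\dd x\leq E_0$ by \eqref{eq:ent-id}. The plan is to integrate the elliptic identity \eqref{eq:res-flux} over $(s,t)$. After the integrations by parts recorded below \eqref{eq:res-flux}, the term $-\int_\Omega F(\mathcal K_\beta(\theta))_x\dd x$ on the left equals $\int_\Omega(\eta-\psi)\mathcal K_\beta(\theta)\dd x$. This yields the good term $\int_\Omega\mathcal K_\beta(\theta)\eta\dd x$. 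The negative companion $-\int_\Omega\mathcal K_\beta(\theta)\psi\dd x$ is bounded below by $-\varepsilon\int_\Omega\mathcal K_\beta(\theta)\eta\dd x-C_\varepsilon$, using \eqref{eq:Kpsi}. The term $\int_\Omega\mathcal D_\beta\psi\dd x$ is nonnegative, since $\psi\geq0$. The time derivative contributes $\mathcal H(t)-\mathcal H(s)\geq-C$, by \eqref{eq:resst}. It remains to bound the right-hand side of \eqref{eq:res-flux} by a small multiple of $\int\mathcal K_\beta\eta$, plus $C$ per unit time, plus terms with bounded time integral.

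The main right-hand terms are handled as follows. First, split $u(v-\theta-\frac12v|b|^2)=u\eta+u(v-\theta-\frac12v|b|^2-\eta)$. I would then combine $\int_\Omega Fu\eta\dd x$ with the viscous term. Write $\mu=\mu_1+\mu_2v^{-\alpha}$, and treat the constant part via $\mu_1\int_\Omega Fuu_x\dd x=-\frac{\mu_1}2\int_\Omega(\eta-\psi)u^2\dd x$. This produces $-\frac{\mu_1}{2}\int_\Omega\eta u^2\dd x$, the "positive term" mentioned in the introduction when moved to the left, together with $\frac{\mu_1}2\int_\Omega\psi u^2\dd x\leq C$. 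The cross term $\int_\Omega Fu\eta\dd x$ is bounded by $\frac{\mu_1}{4}\int_\Omega\eta u^2\dd x+C\int_\Omega\eta\dd x$, using $\norm F_{L^\infty}\leq E_0$. The remainder is estimated with \eqref{eq:resrem}:
\[
 \int_\Omega|F||u|\sqrt{\eta(1+\eta)}\dd x\leq C\int_\Omega\eta\dd x+C\int_\Omega \eta u^2\dd x,
\]
but this would again need absorption. Instead, I would write $\sqrt{\eta(1+\eta)}\lesssim \sqrt\eta+\eta$ and bound $|u|\eta\leq\epsilon\eta u^2+C_\epsilon\eta$, so this piece is also absorbed and leaves only $C$ per unit time. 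The remaining terms are easier. The term $\int_\Omega Fv\,w\cdot b\dd x$ is bounded by $C\int_\Omega\eta\dd x$, since $v|w||b|\leq |w|^2+v^2|b|^2$ and $v\leq C(1+\mathcal K_\beta+\Phi(v))$; alternatively, one can use $v|b|^2\leq C\eta$ with $|w|^2\leq 2\eta$ and Cauchy--Schwarz. After integration by parts, the $w\cdot w_x$ and $b\cdot b_x$ terms give $\frac12\int_\Omega\psi(|w|^2+|b|^2)\dd x\leq C$ plus the nonpositive pieces $-\frac12\int_\Omega\eta|w|^2\dd x$ and $-\frac12\int_\Omega\eta|b|^2\dd x$. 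Here $|b|^2$ is controlled through $v|b|^2$ and the lower bound \eqref{eq:vlow}. Finally, $\frac12\int_\Omega u_xF^2\dd x\leq C\int_\Omega \frac{\mu u_x^2}{v}\dd x+C\int_\Omega vF^2\dd x$, and $\int_\Omega vF^2\dd x\leq 2\mathcal H\leq C$.

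The main obstacle is the viscous dissipation $\int_s^t\!\int_\Omega\frac{\mu u_x^2}{v}\dd x\dd r$ that appears both here and in $I_\alpha=\mu_2\int_\Omega v^{-\alpha}Fuu_x\dd x$. By \eqref{eq:heat-conv}, this dissipation is only bounded by $C+C\int\!\!\int\mathcal K_\beta\eta$ with a constant that is not small, so it cannot be absorbed naively. For $I_\alpha$ I would use Young's inequality with a small parameter, $|I_\alpha|\leq\epsilon\int_\Omega\frac{\mu u_x^2}{v}\dd x+C_\epsilon\int_\Omega v^{1-\alpha}\mu_2 u^2\dd x$, and then bound the second piece by $\int_\Omega u^2\dd x\leq C$ if $\alpha\geq1$, using \eqref{eq:vlow}, or by $C\int_\Omega(1+\eta)u^2\dd x$ otherwise, which is absorbed by $\frac{\mu_1}{4}\int_\Omega\eta u^2\dd x$. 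The term $\frac12\int_\Omega u_xF^2\dd x$ is treated differently: since $\norm F_{L^\infty}\leq E_0$, write $|u_x|F^2\leq\epsilon\frac{\mu u_x^2}{v}+C_\epsilon vF^4\leq \epsilon\frac{\mu u_x^2}{v}+C_\epsilon vF^2$. Then \eqref{eq:heat-conv} gives $\epsilon C\int\!\!\int\mathcal K_\beta\eta+C$. Choosing $\epsilon$ and the $\varepsilon$ in \eqref{eq:Kpsi} small yields the claimed bound, and \eqref{eq:b-cap} follows from \eqref{eq:vtb}.
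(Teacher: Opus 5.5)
\textbf{Main gap: the absorption of $I_\alpha$ for $0<\alpha<1$.} Your Young's inequality gives $|I_\alpha|\le\epsilon\int_\Omega\frac{\mu u_x^2}{v}\dd x+\frac{\mu_2}{4\epsilon}\int_\Omega v^{1-\alpha}F^2u^2\dd x$. The parameter $\epsilon$ is not free: it must be small enough that the term $\epsilon C\int_s^t\!\int_\Omega\mathcal K_\beta(\theta)\eta\dd x\dd r$ produced by \eqref{eq:heat-conv} can be absorbed. So the coefficient $\frac{\mu_2}{4\epsilon}$ is large. Bounding $v^{1-\alpha}\le C(1+\eta)$ then leaves $\frac{C\mu_2E_0^2}{\epsilon}\int_\Omega\eta u^2\dd x$. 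This cannot be absorbed by the fixed coefficient $\frac{\mu_1}{4}$ without a smallness assumption on $\frac{\mu_2}{\mu_1}$ or $E_0$, which is exactly what the theorem avoids.

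The missing idea, which the paper uses, is that $\alpha>0$ makes the weight strictly sublinear; the case $\alpha=0$ is absorbed into $\mu_1$. For every $\delta>0$ one has $v^{1-\alpha}\le\delta\Phi(v)+C_\delta$. Hence
\[
\frac{\mu_2}{4\epsilon}\int_\Omega v^{1-\alpha}F^2u^2\dd x\le\frac{\mu_2E_0^2\delta}{4\epsilon}\int_\Omega\eta u^2\dd x+C_{\epsilon,\delta},
\]
and you choose $\delta$ after $\epsilon$. The paper writes the weight as $\frac{v^{1-2\alpha}}{\mu}$, so its trivial case is $\alpha\ge\frac12$. Your sharper weight $v^{1-\alpha}$ is fine, but you need the same order of choosing constants.

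\textbf{Bound on $\int_\Omega Fv\,w\cdot b\dd x$.} Your bound $\bigl|\int_\Omega Fv\,w\cdot b\dd x\bigr|\le C\int_\Omega\eta\dd x$ does not follow from either route you give, because no upper bound for $v$ is available yet. In the first route, $v^2|b|^2=v\cdot v|b|^2$ is a product of two energy densities. The Cauchy--Schwarz route leaves a factor $\sqrt v$. The fix is to use the positive term $\frac12\int_\Omega\eta|w|^2\dd x$ you already obtained from $\int_\Omega Fw\cdot w_x\dd x$. Since $\eta\ge\frac12v|b|^2$, you get $|F|v|w||b|\le\varepsilon\eta|w|^2+C_\varepsilon vF^2$, and $\int_\Omega vF^2\dd x\le2\mathcal H\le C$.

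\textbf{Comparison on $\frac12\int_\Omega u_xF^2\dd x$.} Your Young-inequality treatment of this term via \eqref{eq:heat-conv} is correct, but it differs from the paper. The paper integrates by parts:
\[
\frac12\int_\Omega u_xF^2\dd x=-\int_\Omega uF(\eta-\psi)\dd x.
\]
The piece $-\int_\Omega uF\eta\dd x$ then exactly cancels the $\eta$ you split off from the pressure term. As a result, the paper needs the viscous dissipation only for $I_\alpha$, and not at all when $\mu_2=0$.
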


\begin{proof}
Set
\[
 I_{\alpha}=\mu_2\int_\Omega v^{-\alpha}Fuu_x\dd x.
\]
Integrating by parts, we also have
\begin{align}
 \mu_1\int_\Omega Fuu_x\dd x+\frac12\int_\Omega u_xF^2\dd x
 ={}&-\frac{\mu_1}{2}\int_\Omega\eta u^2\dd x
 -\int_\Omega uF\eta\dd x\notag\\
 &+\frac{\mu_1}{2}\int_\Omega\psi u^2\dd x
 +\int_\Omega uF\psi\dd x.                         \label{eq:metric}
\end{align}
Using \eqref{eq:res-flux}, we obtain
\begin{align}
 \mathcal H_t
 &+\int_\Omega\mathcal K_\beta(\theta)\eta\dd x
 +\frac12\int_\Omega\eta\bigl(\mu_1u^2+|w|^2+|b|^2\bigr)\dd x
 +\int_\Omega\mathcal D_\beta\psi\dd x\notag\\
 &=\int_\Omega Fu
 \left(v-\theta-\frac12v|b|^2-\eta\right)\dd x
 +\int_\Omega Fv\,w\cdot b\dd x \notag\\
 &\quad+\int_\Omega\mathcal K_\beta(\theta)\psi\dd x
 +\int_\Omega uF\psi\dd x
 +\frac12\int_\Omega
 \psi\bigl(\mu_1u^2+|w|^2+|b|^2\bigr)\dd x+I_{\alpha}\notag\\&\triangleq \sum_{i=1}^5I_i+I_{\alpha}.             \label{eq:res-id}
\end{align}

Young's inequality, \eqref{eq:resrem},
and \eqref{eq:resinf}--\eqref{eq:resst}
give
\begin{align*}
 |I_1|
 &\leq\varepsilon\int_\Omega\eta u^2\dd x
 +C_\varepsilon\int_\Omega F^2
 \frac{\left|v-\theta-\frac12v|b|^2-\eta\right|^2}{\eta}
 \dd x\\
 &\leq\varepsilon\int_\Omega\eta u^2\dd x+C_\varepsilon,\\
 |I_2|
 &\leq\varepsilon\int_\Omega\eta|w|^2\dd x
 +C_\varepsilon\int_\Omega vF^2\dd x\\
 &\leq\varepsilon\int_\Omega\eta|w|^2\dd x+C_\varepsilon,\\
 |I_4|+|I_5|&\leq C.
\end{align*}
Here the quotients are defined to be zero on $\{\eta=0\}$.
For $I_2$, we used $\eta\geq\frac{v|b|^2}{2}$; for $I_4$ and
$I_5$, we used \eqref{eq:ent-id} and \eqref{eq:vlow}.

Using \eqref{eq:Kpsi} for $I_3$, choose $\varepsilon$ sufficiently small and absorb the corresponding
terms in $I_1$, $I_2$, and $I_3$. Retaining the term involving
$\eta u^2$, we obtain
\begin{equation}
 \mathcal H_t+c_1\int_\Omega\mathcal K_\beta(\theta)\eta\dd x
 +c_2\int_\Omega\eta u^2\dd x\leq C+|I_{\alpha}|,
 \qquad c_1,c_2>0. \label{eq:HK-eta}
\end{equation}
For any $\varepsilon>0$, Young's inequality gives
\begin{equation}
 |I_{\alpha}|\leq\varepsilon\int_\Omega\frac{\mu(v)u_x^2}{v}\dd x
 +C_\varepsilon\int_\Omega
 \frac{v^{1-2\alpha}}{\mu(v)}F^2u^2\dd x. \label{eq:Ialpha-est}
\end{equation}
If $\alpha\geq\frac{1}{2}$, the last integral is bounded by a constant,
by \eqref{eq:vlow}, \eqref{eq:resinf}, and \eqref{eq:ent-id}.
If $0<\alpha<\frac{1}{2}$, then for any $\delta>0$,
\[
 z^{1-2\alpha}\leq\delta\Phi(z)+C_\delta\quad(z\geq c).
\]
Since $\mu(v)\geq\mu_1$ and $F$ is bounded, this implies
\[
 \int_\Omega\frac{v^{1-2\alpha}}{\mu(v)}F^2u^2\dd x
 \leq C\delta\int_\Omega\eta u^2\dd x+C_\delta.
\]
Integrate \eqref{eq:HK-eta} from $s$ to $t$ and use
\eqref{eq:Ialpha-est} and \eqref{eq:heat-conv}.
First choose $\varepsilon$ small enough to absorb the resulting
multiple of $\int_s^t\!\int_\Omega\mathcal K_\beta(\theta)\eta$.
Then, for $0<\alpha<\frac{1}{2}$, choose $\delta$ small enough to absorb
the integral of $\eta u^2$. Using $0\leq\mathcal H\leq C$, we obtain
\begin{equation}
 \int_s^t\!\int_\Omega
 \{\mathcal K_\beta(\theta)\eta+\eta u^2\}\dd x\dd r
 \leq C(1+t-s). \label{eq:Keta-int}
\end{equation}
Together with \eqref{eq:vtb} and
$\int_\Omega\eta\dd x\leq E_0$, this proves
\eqref{eq:b-cap}.
\end{proof}
\begin{lemma}\label{l:bP}
For all $0\leq s<t\leq T$, every admissible integer $N$, and every
$x\in[N,N+1]$,
\begin{equation}
 \int_s^t\left|
 \frac12|b(x,r)|^2-\frac12\int_N^{N+1}|b(y,r)|^2\dd y
 \right|\dd r
 \leq C\sqrt{1+t-s}.                                \label{eq:bP}
\end{equation}
In fact,
\begin{equation}
 \int_s^t\!\int_N^{N+1}|b||b_y|\dd y\dd r
 \leq C\sqrt{1+t-s}. \label{eq:b-osc}
\end{equation}
\end{lemma}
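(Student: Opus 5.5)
The plan is to prove \eqref{eq:b-osc} first and then deduce \eqref{eq:bP} from it.

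For the deduction: for fixed $r$ and $x\in[N,N+1]$, the mean value theorem gives a point $y_N(r)\in[N,N+1]$ at which $\frac12|b(y_N,r)|^2$ equals the cell average of $\frac12|b|^2$. Hence
\[
 \left|\frac12|b(x,r)|^2-\frac12\int_N^{N+1}|b|^2\dd y\right|
 \leq\left|\int_{y_N}^x b\cdot b_y\dd y\right|
 \leq\int_N^{N+1}|b||b_y|\dd y .
\]
Integrating in $r$ over $(s,t)$ then gives \eqref{eq:bP} from \eqref{eq:b-osc}.

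For \eqref{eq:b-osc}, I would use the Cauchy--Schwarz inequality in space-time, with weights chosen to match the available estimates:
\[
 \int_s^t\!\int_N^{N+1}|b||b_y|\dd y\dd r
 \leq\left(\int_s^t\!\int_\Omega v\theta|b|^2\dd x\dd r\right)^{\frac12}
 \left(\int_s^t\!\int_\Omega\frac{|b_x|^2}{v\theta}\dd x\dd r\right)^{\frac12}.
\]
The second factor is at most $E_0^{1/2}$, because $\frac{|b_x|^2}{v\theta}\leq\mathcal D_\beta$ and \eqref{eq:ent-id} bounds the total dissipation. The first factor is at most $C(1+t-s)^{1/2}$ by Proposition~\ref{p:bcap}, that is, \eqref{eq:b-cap}. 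Combining the two factors gives $C\sqrt{1+t-s}$, with constants independent of $N$, $x$, $s$, $t$ and $T$.

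There is no genuine obstacle here. All the work sits in \eqref{eq:b-cap}, which is already established through the auxiliary elliptic problem of Lemma~\ref{l:res} and the estimate \eqref{eq:Keta-int}. The only points to check are that the constants do not depend on $N$ and that the pointwise step holds for almost every $r$. The latter is fine because $b(\cdot,r)\in H^1$, so $|b|^2$ is absolutely continuous on each cell. On the half-line, the same argument applies unchanged to the cells $[N,N+1]$ with $N\geq0$.
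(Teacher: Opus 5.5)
Your proposal is correct and follows the paper's proof. The paper likewise bounds the pointwise oscillation by $\int_N^{N+1}|b||b_y|\dd y$, then applies Cauchy--Schwarz with weights $v\theta$ and $(v\theta)^{-1}$, controlling the first factor by \eqref{eq:b-cap} and the second by \eqref{eq:ent-id}.
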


\begin{proof}
Fix $r$. Since $[N,N+1]$ has unit length, for $x\in[N,N+1]$ we have
\begin{align*}
 &\left|\frac12|b(x,r)|^2
 -\frac12\int_N^{N+1}|b(y,r)|^2\dd y\right|
 \\ &\quad\leq\int_N^{N+1}|b||b_y|\dd y\\
 &\quad\leq\left(\int_N^{N+1}v\theta|b|^2\dd y\right)^{\frac12}
 \left(\int_N^{N+1}\frac{|b_y|^2}{v\theta}\dd y\right)^{\frac12}.
\end{align*}
Integrating the last inequality in time and using \eqref{eq:b-cap}
and \eqref{eq:ent-id}, we obtain
\begin{align*}
& \int_s^t\!\int_N^{N+1}|b||b_y|\dd y\dd r
 \\ &\leq\left(\int_s^t\!\int_\Omega v\theta|b|^2\dd x\dd r\right)^{\frac12}
 \left(\int_s^t\!\int_\Omega
 \frac{|b_x|^2}{v\theta}\dd x\dd r\right)^{\frac12}\\
 &\leq C\sqrt{1+t-s}.
\end{align*}
This proves \eqref{eq:b-osc}; the preceding pointwise estimate
then gives \eqref{eq:bP}.
\end{proof}

\begin{proposition}\label{p:v-up}
There exists a constant $C$, independent of $T$, such that
\begin{equation}
 v(x,t)\leq C,\qquad x\in\Omega,\quad 0\leq t\leq T.
 \label{eq:v-up}
\end{equation}
\end{proposition}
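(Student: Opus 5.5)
We will work with the cell representation from Lemma~\ref{l:Kaz}, mirroring the proof of Proposition~\ref{p:vlow} but now at the upper end. Fix an admissible $N$ and $x\in[N,N+1]$, and set $R_N(t)=\log\frac{Z_N(x,t)}{\overline Z_N(t)}$. Since $Z_N/v\leq C$ and $\overline Z_N\geq c$, an upper bound for $Z_N$ on the cell suffices only in one direction. We therefore go back to the identity $\exp\{M(v)/\mu_1\}\geq c_a v$ for $v\geq a$. Together with $|U_N|\leq C$ and $\overline Z_N\leq C$, this shows that a uniform upper bound $R_N\leq C$ implies $v\leq C$. So the goal is $R_N(t)\leq C$, uniformly in $N$, $x$ and $t$.

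The first step is a one-sided differential inequality. By \eqref{eq:Z-evol},
\[
\mu_1 R_N'=\Bigl(\frac{\theta}{v}(x)-A_\theta\Bigr)+\Bigl(\frac12|b(x)|^2-A_b\Bigr),
\]
where $A_\theta$ and $A_b$ are the $Z_N$-weighted cell means of $\theta/v$ and $\frac12|b|^2$.

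For the pressure part, take $t\notin\mathcal B$. Then \eqref{eq:good-two} gives $c_\theta\leq\theta\leq C_\theta$. Using $\int_N^{N+1}Z_N/v\geq c$ and $\overline Z_N\leq C$, we get $A_\theta\geq c_\theta c/C=:2a_0$. If $R_N>L$ with $L$ large, then $Z_N(x)$ is large, hence $v(x)$ is large, so $\theta/v(x)\leq a_0$. Thus
\[
\frac{\theta}{v}(x)-A_\theta\leq-a_0 \quad\text{when } R_N>L,\ t\notin\mathcal B.
\]
For $t\in\mathcal B$ we only use $\frac{\theta}{v}(x)-A_\theta\leq\frac{\theta}{v}(x)$. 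On $\{R_N>L\}$ we have $v(x)\geq1$, so this is at most $\theta(x,t)$. I expect to control $\int_{\mathcal B}\|\theta\|_{L^\infty}\,\dd t$ as follows. On $[N,N+1]$, with $\beta>0$, we have $\theta^{\beta/2}\leq C+CD_\theta^{1/2}$ from the proof of Lemma~\ref{l:osc}. Integrating over $\mathcal B$ and using \eqref{eq:bad-measure} with Cauchy–Schwarz controls $\int_{\mathcal B}\sup\theta^{\beta/2}\,\dd t$. This gives a bound on $\theta$ itself only if $\beta\geq2$, which is not general.

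Better is to avoid a pointwise bound on $\theta$ altogether. Write $\frac{\theta}{v}(x)\leq\theta(x)$ and split $\theta(x)\leq\int_N^{N+1}\theta+\int_N^{N+1}|\theta_y|$. Since $\theta_y=\theta^{1-\beta/2}\cdot\theta^{\beta/2-1}\theta_y$, Cauchy–Schwarz gives
\[
\int_N^{N+1}|\theta_y|\leq D_\theta^{1/2}\Bigl(\int_N^{N+1}v\theta^{2-\beta}\Bigr)^{1/2}.
\]
The second factor is bounded via $\int_s^t\int_\Omega\mathcal K_\beta(\theta)\eta$ from \eqref{eq:Keta-int}, or directly using $v\theta^{2-\beta}\leq C(1+\mathcal K_\beta(\theta)\eta+\eta)$. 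Combined with Cauchy–Schwarz in time over $\mathcal B\cap(s,t)$, this should yield the paper's \eqref{eq:max-pressure}, namely a time integral over $\mathcal B$ bounded by $C+C\sqrt{1+t-s}$. That is still sublinear in $t-s$. For the magnetic part, $\frac12|b(x)|^2-A_b\leq\int_N^{N+1}|b||b_y|$, since both $x$ and the weighted mean lie in the range of $\frac12|b|^2$ on the cell.

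The second step is integration. On a connected component $(s,t)$ of $\{R_N>\max(L,R_N(0))\}$ we have $R_N(s)=\max(L,R_N(0))\leq C$. Integrating and using \eqref{eq:b-osc} gives
\[
\mu_1R_N(t)\leq C-a_0\bigl(t-s-|\mathcal B|\bigr)+C\sqrt{1+t-s}.
\]
This is \eqref{jhoa1}. Since $-a_0(t-s)+C\sqrt{1+t-s}\leq C$, we get $R_N\leq C$, and hence \eqref{eq:v-up}.

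The main obstacle is the $t\in\mathcal B$ contribution of $\theta/v(x)$ without an upper bound on $\theta$. I would handle it through the $D_\theta^{1/2}$ oscillation bound combined with \eqref{eq:Keta-int}, accepting any bound of order $C\sqrt{1+t-s}$, which the negative linear term absorbs.
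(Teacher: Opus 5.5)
Your overall architecture matches the paper: the one-sided inequality from \eqref{eq:Z-evol}, the drift $-a_0$ off $\mathcal B$, the bound $\int_N^{N+1}|b||b_y|\dd y$ for the magnetic part, integration over a connected component, and absorption of $C\sqrt{1+t-s}$ by the linear term. There is, however, a genuine gap on the bad set $\mathcal B$ when $\beta$ is small. Because you work at a fixed point $x$, you discard the factor $1/v(x)$ and must then control $\int_{\mathcal B\cap(s,t)}\theta(x,r)\dd r$.

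Your Cauchy--Schwarz step reduces this to showing $\int_s^t\!\int_N^{N+1}v\theta^{2-\beta}\dd y\dd r\leq C(1+t-s)$. For that you invoke $v\theta^{2-\beta}\leq C(1+\mathcal K_\beta(\theta)\eta+\eta)$, and this inequality is false for $0\leq\beta<\frac12$. Indeed, $\mathcal K_\beta(\theta)\eta$ grows only like $\theta^{\beta+1}(v+\theta)$. Taking $v=\theta=\Lambda\to\infty$ and $u=w=b=0$ gives $\Lambda^{3-\beta}$ on the left against $O(\Lambda^{\beta+2})$ on the right. The space--time bound \eqref{eq:Keta-int} controls nothing stronger than $\mathcal K_\beta(\theta)\eta$, so it does not rescue the step either. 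Your argument does close for $\beta\geq\frac12$, where $v\theta^{2-\beta}\leq Cv+C\mathcal K_\beta(\theta)\eta$. It does not close for $0\leq\beta<\frac12$, a range that contains the constant-conductivity case the paper emphasizes. (Your first attempt, via $\theta\leq C+CD_\theta^{1/\beta}$, already works for $\beta\geq1$, not only for $\beta\geq2$.)

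The missing idea is to work at the maximizing point rather than at a fixed $x$. The paper sets $L_N(t)=\max_{[N,N+1]}\log Z_N-\log\overline Z_N$ with maximizer $x_+(t)$. By \eqref{eq:Zmax-v}, this gives $\max_{[N,N+1]}v\leq Cv(x_+,t)$.

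The paper then estimates $\sqrt\theta$ rather than $\theta$, starting from a point where $\theta$ equals its cell mean. It uses $\theta\geq c$ to bound $\int_N^{N+1}\frac{\theta_x^2}{v\theta^2}\dd x$ by $CD_\theta$, and uses $\int_N^{N+1}v\theta\dd x\leq C\max_{[N,N+1]}v$. This yields $\theta(x_+,t)\leq C+CD_\theta(t)\,v(x_+,t)$, which is \eqref{eq:max-pressure}: $\frac{\theta}{v}(x_+,t)\leq C(1+D_\theta(t))$.

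This is a pointwise-in-time bound, not the $\sqrt{1+t-s}$-type bound you attribute to it. Its integral over $\mathcal B$ is a constant by \eqref{eq:bad-measure}, with no appeal to \eqref{eq:Keta-int} and no restriction on $\beta$. At a fixed $x$ this division by $v$ is unavailable, because $\max_{[N,N+1]}v/v(x)$ is uncontrolled; that is exactly why your version needs integrability of $\theta$ that the energy estimates do not provide. Passing to the maximum costs only the absolute continuity of $L_N$ and the bound of $L_N'$ by the time derivative at maximizing points, both of which the paper uses.
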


\begin{proof}
Let $\mathcal B$ be the set used in Proposition~\ref{p:vlow}.
For an admissible $N$, let
\[
 L_N(t)=\max_{x\in [N,N+1]}\log Z_N(x,t)-\log\overline Z_N(t),
\]
and choose a maximizing point $x_+(t)\in [N,N+1]$.
By \eqref{eq:vlow}, \eqref{eq:fM}, and \eqref{eq:Zbounds},
\begin{equation}
 c v\leq Z_N\leq Cv,\qquad
 \max_{[N,N+1]}v\leq C v(x_+,t),\qquad v(x_+,t)\geq c.
 \label{eq:Zmax-v}
\end{equation}
Starting at a point where $\theta$ equals its cell average, we obtain
\begin{align*}
 \sqrt{\theta(x_+,t)}
 &\leq C+\frac12\int_N^{N+1}\frac{|\theta_x|}{\sqrt\theta}\dd x\\
 &\leq C+C\left(\int_N^{N+1}\frac{\theta_x^2}{v\theta^2}\dd x\right)^{\frac12}
             \left(\int_N^{N+1}v\theta\dd x\right)^{\frac12}\\
 &\leq C+C\left\{D_\theta(t)\max_{[N,N+1]}v\right\}^{\frac12}.
\end{align*}
Here we used \eqref{eq:cell} and the uniform lower bound
\eqref{eq:t-low}. Thus \eqref{eq:Zmax-v} implies
\begin{equation}
 \frac{\theta(x_+,t)}{v(x_+,t)}\leq C(1+D_\theta(t)).
 \label{eq:max-pressure}
\end{equation}

For almost every $t\notin\mathcal B$, \eqref{eq:good-two} and
\eqref{eq:Zbounds} give
\[
 \frac1{\overline Z_N}\int_N^{N+1}\frac{Z_N\theta}{v}\dd y
 \geq c_0>0,
 \qquad
 \frac{\theta(x_+,t)}{v(x_+,t)}\leq C\mathrm e^{-L_N(t)}.
\]
Choose a fixed $L_*>0$ so that the last quantity is at most
$\frac{c_0}{2}$ when $L_N>L_*$. Moreover, the fundamental theorem of
calculus and \eqref{eq:Zdef} give, for every $x\in[N,N+1]$,
\[
 \frac12|b(x,t)|^2
 -\frac1{2\overline Z_N(t)}\int_N^{N+1}Z_N(y,t)|b(y,t)|^2\dd y
 \leq\int_N^{N+1}|b||b_y|\dd y.
\]
The function $L_N$ is absolutely
continuous on $[0,T]$, by the time regularity in Lemma~\ref{l:Kaz}.
At almost every differentiability time its derivative is bounded by
the largest time derivative among the maximizing points.
Using \eqref{eq:Z-evol}, we therefore obtain, whenever $L_N>L_*$,
\[
 L_N'(t)\leq
 \begin{cases}
 -c+\mu_1^{-1}\displaystyle\int_N^{N+1}|b||b_y|\dd y,&t\notin\mathcal B,\\[2mm]
 C(1+D_\theta(t))+\mu_1^{-1}\displaystyle\int_N^{N+1}|b||b_y|\dd y,&t\in\mathcal B.
 \end{cases}
\]
In the second case we used \eqref{eq:max-pressure}.

If $L_N(t)>\max\{L_*,L_N(0)\}$, let $s$ be the left endpoint
of the corresponding connected component. Then
$L_N(s)=\max\{L_*,L_N(0)\}\leq C$. Integrating over
$(s,t)\setminus\mathcal B$ and $(s,t)\cap\mathcal B$, and using
\eqref{eq:bad-measure}, we obtain
\begin{align}
 L_N(t)&\leq L_N(s)-c|(s,t)\setminus\mathcal B|
 +C\int_{(s,t)\cap\mathcal B}(1+D_\theta(r))\dd r\notag\\
 &\quad+\frac1{\mu_1}\int_s^t\!\int_N^{N+1}|b||b_y|\dd y\dd r\notag\\
 &\leq C-c(t-s)
 +\frac1{\mu_1}\int_s^t\!\int_N^{N+1}|b||b_y|\dd y\dd r.
 \label{jhoa1}
\end{align}
Applying \eqref{eq:b-osc}, we find
\[
 L_N(t)\leq C+\sup_{\tau\geq0}
 \{-c\tau+C\sqrt{1+\tau}\}\leq C.
\]
The same bound is immediate outside these components, since
$L_N(0)\leq C$. Hence $Z_N\leq C$, and
\eqref{eq:Zmax-v} gives the required upper bound for $v$.
\end{proof}

\subsection{Temperature-dependent viscosity}\label{sec:temperature}

Assume $\eqref{eq:viscosity}_2$ and the temporary bounds
\eqref{eq:boot}--\eqref{eq:small-gamma}.
Dividing \eqref{eq:mom} by $\mu$ (cf.\ \cite{SongZhao2025}), we obtain
\begin{equation}
 \left(\frac u\mu\right)_t+h=\sigma_x,
 \qquad \sigma=\frac{u_x}{v}-\frac{\theta}{\mu v}
                         -\frac{|b|^2}{2\mu},
 \label{eq:sigma-T}
\end{equation}
where
\begin{equation}
 h=\frac{u\mu_t}{\mu^2}
   +\left(\frac\theta v+\frac{|b|^2}{2}\right)\frac{\mu_x}{\mu^2}
   -\frac{\mu_xu_x}{\mu v}.
 \label{eq:hmu-T}
\end{equation}
For every admissible $N$ and $0\leq s<t\leq T$, the temporary bounds
\eqref{eq:boot} and \eqref{eq:mu-basic} yield
\begin{align}
 \int_s^t\!\int_N^{N+1}|h|\dd x\dd r
 &\leq C\gamma\bigl\{M^{\frac{3}{2}}\sqrt{t-s}
                    +M^{\frac{7}{2}}\sqrt{t-s}+M^3\bigr\}
 \leq C\varepsilon_{\gamma}(1+t-s),\label{eq:hmu-bd-T}\\
 \int_s^t\!\int_\Omega|\mu_x|u^2\dd x\dd r
 &\leq C\gamma M^2\sqrt{t-s}
 \leq C\varepsilon_{\gamma}(1+t-s),\label{eq:mu-u2-T}\\
 \int_s^t\!\int_N^{N+1}|\mu_x||b|^2\dd x\dd r
 &\leq C\gamma M^{\frac{5}{2}}\sqrt{t-s}
 \leq C\varepsilon_{\gamma}(1+t-s).\label{eq:mu-b2-T}
\end{align}
Indeed, $|\mu_t|+|\mu_x|\leq C\gamma M(|\theta_t|+|\theta_x|)$,
$\frac{\theta}{v}+\frac{|b|^2}{2}\leq CM^2$, and the $L^2$ time-space norms of
$\theta_t,\theta_x,u_x$ are bounded by $M^{\frac{1}{2}}$.
Cauchy--Schwarz on $[N,N+1]\times[s,t]$ gives the three terms in
\eqref{eq:hmu-bd-T}. For \eqref{eq:mu-u2-T}, use
$\norm{u^2}_{L^2}\leq\norm u_{L^\infty}\norm u_{L^2}\leq CM^{\frac{1}{2}}$,
where the last $L^2$ bound follows from \eqref{eq:ent-id}.
For \eqref{eq:mu-b2-T}, use $\norm b_{L^\infty}^2\leq CM$.

\begin{lemma}\label{l:Kaz-T}
For an admissible integer $N$ and $x\in[N,N+1]$, define
\begin{align}
 B_N(x,t)
 &=v_0(x)\exp\biggl\{\int_N^x\left(\frac u\mu-\frac{u_0}{\mu(\theta_0)}\right)\dd y
 +\int_0^t\!\int_N^x h(y,r)\dd y\dd r\biggr\}, \label{eq:BN-T}\\
 Y_N(t)&=\exp\left\{\int_0^t\sigma(N,s)\dd s\right\}. \label{eq:YN-T}
\end{align}
Then
\begin{equation}
 \begin{aligned}
 v(x,t)
  =B_N(x,t)Y_N(t)\biggl[1+\int_0^t
 \frac{\theta+\frac12v|b|^2}{\mu B_N(x,s)Y_N(s)}\dd s\biggr].
  \end{aligned}                                         \label{eq:Kaz-T}
\end{equation}
Moreover, there exist constants $C>0$ and $a>0$ such that, for every
admissible integer $N$, every $x\in[N,N+1]$, and all $0\leq s<t\leq T$,
\begin{equation}
 C^{-1}\mathrm e^{-C\varepsilon_{\gamma}(1+t-s)}\leq\frac{B_N(x,t)}{B_N(x,s)}\leq C\mathrm e^{C\varepsilon_{\gamma}(1+t-s)},                           \label{eq:BNbd-T}
\end{equation}
\begin{equation}
 \frac{Y_N(t)}{Y_N(s)}\leq C\mathrm e^{-a(t-s)}.      \label{eq:YNdec-T}
\end{equation}
\end{lemma}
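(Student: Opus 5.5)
The plan is to derive \eqref{eq:Kaz-T} by integrating \eqref{eq:sigma-T} in space and then in time. After that, \eqref{eq:BNbd-T} follows from the energy estimate and \eqref{eq:hmu-bd-T}. The decay \eqref{eq:YNdec-T} follows from averaging the logarithmic form of the representation over the cell $[N,N+1]$.

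\emph{Representation.} Integrate \eqref{eq:sigma-T} over $[N,x]$. This gives
\[
 \partial_t\int_N^x\frac u\mu\dd y+\int_N^x h\dd y=\sigma(x,t)-\sigma(N,t).
\]
Since $v_t=u_x$, we have
\[
 \sigma=(\log v)_t-\frac{g}{v},\qquad g\triangleq\frac{\theta+\frac12v|b|^2}{\mu}.
\]
Substituting and integrating over $(0,t)$ gives
\[
 \log v=\log B_N+\log Y_N+\int_0^t\frac gv\dd r.
\]
Put $Q(x,t)=\exp\{\int_0^t g/v\dd r\}$, so that $v=B_NY_NQ$. Then $Q_t=gQ/v=g/(B_NY_N)$ and $Q(x,0)=1$. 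Integrating $Q_t$ in time gives \eqref{eq:Kaz-T}.

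The identities are justified by the regularity in \eqref{eq:finite}, which gives absolute continuity in time, exactly as at the end of the proof of Lemma~\ref{l:Kaz}. The trace $\sigma(N,\cdot)$ exists because $\sigma\in L^2(0,T;H^1)$ under \eqref{eq:boot}. At $N=0$ on the half-line this is simply an endpoint trace.

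\emph{Bound \eqref{eq:BNbd-T}.} By \eqref{eq:BN-T},
\[
 \log\frac{B_N(x,t)}{B_N(x,s)}=\int_N^x\Bigl(\frac u\mu(t)-\frac u\mu(s)\Bigr)\dd y+\int_s^t\!\int_N^x h\dd y\dd r.
\]
The first term is bounded by $4\sup_r\int_N^{N+1}|u|\dd y\leq C$, using \eqref{eq:mu-basic} and \eqref{eq:ent-id}. The second term is bounded by $C\varepsilon_\gamma(1+t-s)$, by \eqref{eq:hmu-bd-T}.

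\emph{Decay \eqref{eq:YNdec-T}, the main step.} From $v=B_NY_NQ$ and $\log Q(t)-\log Q(s)=\int_s^t g/v\dd r$, we get for every $x\in[N,N+1]$
\[
 \log\frac{Y_N(t)}{Y_N(s)}=\log\frac{v(x,t)}{v(x,s)}-\log\frac{B_N(x,t)}{B_N(x,s)}-\int_s^t\frac{g}{v}(x,r)\dd r.
\]
The left side does not depend on $x$, so integrate in $x$ over $[N,N+1]$. Since $|\log z|\leq C(1+\Phi(z))$, \eqref{eq:ent-id} gives $\int_N^{N+1}|\log v|\dd x\leq C$. Together with \eqref{eq:BNbd-T}, this yields
\[
 \log\frac{Y_N(t)}{Y_N(s)}\leq C+C\varepsilon_\gamma(1+t-s)-\frac12\int_s^t\!\int_N^{N+1}\frac\theta v\dd x\dd r.
\]
Here we used $\mu\leq2$ and dropped the nonnegative magnetic part of $g$.

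The key point is a cell lower bound $\int_N^{N+1}\theta/v\dd x\geq c_0$ that holds for every $t$, not just off $\mathcal B$. To get it, choose $a\in(0,1)$ with $\Phi(a)\geq4E_0$. Then the set $A=[N,N+1]\cap\{\theta\geq a\}$ has $|A|\geq\frac34$. By Cauchy--Schwarz and \eqref{eq:cell},
\[
 \int_N^{N+1}\frac\theta v\dd x\geq a\int_A\frac1v\dd x\geq\frac{a|A|^2}{\int_A v\dd x}\geq\frac{9a}{16M_0}\triangleq c_0.
\]
Hence
\[
 \log\frac{Y_N(t)}{Y_N(s)}\leq C+\bigl(C\varepsilon_\gamma-\tfrac{c_0}{2}\bigr)(t-s).
\]
Taking $\varepsilon_*$ in \eqref{eq:small-gamma} small enough that $C\varepsilon_*\leq c_0/4$ gives \eqref{eq:YNdec-T} with $a=c_0/4$. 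This smallness requirement on $\varepsilon_*$ is the only delicate point. The constants depend only on $E_0$ and $M_0$, and not on $M$, $\gamma$, $N$, or $T$, because $M$ enters solely through $\varepsilon_\gamma$.
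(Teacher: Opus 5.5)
Your proposal is correct. The representation \eqref{eq:Kaz-T} and the bound \eqref{eq:BNbd-T} are obtained essentially as in the paper; your route differs in the proof of the decay \eqref{eq:YNdec-T}.

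\textbf{How the paper proves the decay.} It averages the equation for $\sigma(N,t)$ over the cell at each fixed time, which gives \eqref{eq:lj02-T}. There the term $\int_N^{N+1}\frac{u_x}{v}\dd x$ is split by Young's inequality into half of the pressure plus the entropy dissipation $\frac{\mu u_x^2}{v\theta}$, and the velocity part appears as $-H_N'(t)$. The cell pressure is bounded below through Lemma~\ref{l:osc}, at the price of a term $C D_\theta(t)$; see \eqref{eq:P-cell-T}. Both the dissipation and $D_\theta$ are then paid for with \eqref{eq:ent-id} after integrating in time.

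\textbf{How your argument differs.} You average the time-integrated logarithmic identity instead. Since $\int_N^{N+1}\log\frac{B_N(x,t)}{B_N(x,s)}\dd x$ contains exactly $H_N(t)-H_N(s)$, this is the same identity integrated in time. The difference is that $\frac{u_x}{v}=(\log v)_t$ now telescopes into $\int_N^{N+1}\log v\dd x$ at the two endpoint times, and this is controlled by $\Phi(v)$ alone. Your measure plus Cauchy--Schwarz lower bound for $\int_N^{N+1}\frac{\theta}{v}\dd x$ holds at every time. Consequently you need neither the oscillation lemma nor the dissipation part of \eqref{eq:ent-id}. Your argument uses only $\sup_t\int_\Omega\eta\dd x\le E_0$, \eqref{eq:cell}, and \eqref{eq:hmu-bd-T}, so it is the more elementary and self-contained of the two. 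The paper's route additionally yields the pointwise-in-time inequality \eqref{eq:sigavg-T}, but that inequality is only ever used after time integration.

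\textbf{One thing not to discard.} The next result, Lemma~\ref{l:Kaza-T}, relies on the sharper form \eqref{eqyn1-T}, which keeps $-\int_s^t\!\int_N^{N+1}\frac{|b|^2}{2\mu}\dd x\dd r$ in the exponent. Your argument produces exactly this term, with the same coefficient, provided you keep the magnetic part of $g/v$ instead of dropping it.
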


\begin{proof}
Fix $x\in[N,N+1]$. Integrating \eqref{eq:sigma-T} first from $N$ to $x$
and then from $0$ to $t$, we obtain
\begin{align*}
 \int_N^x\left(\frac u\mu-\frac{u_0}{\mu(\theta_0)}\right)\dd y+\int_0^t\!\int_N^x h\dd y\dd s
 &=\int_0^t[\sigma(x,s)-\sigma(N,s)]\dd s\\
 &=\log\frac{v(x,t)}{v_0(x)}
 -\int_0^t\left(\frac{\theta}{\mu v}+\frac{|b|^2}{2\mu}\right)\dd s
 -\int_0^t\sigma(N,s)\dd s,
\end{align*}
where we used $(\log v)_t=\frac{u_x}{v}$.
By \eqref{eq:BN-T}--\eqref{eq:YN-T},
\[
 \frac{v(x,t)}{B_N(x,t)Y_N(t)}
 =\exp\left\{\int_0^t
 \left(\frac{\theta}{\mu v}+\frac{|b|^2}{2\mu}\right)\dd s\right\}.
\]
Differentiating this identity in time, we obtain
\begin{equation}\label{vav1-T}
 \frac{\dd}{\dd t}\left(\frac{v}{B_NY_N}\right)
 =\frac{\theta}{\mu B_NY_N}
 +\frac{|b|^2}{2\mu}\frac{v}{B_NY_N},\qquad
 \frac{v(x,0)}{B_N(x,0)Y_N(0)}=1.
\end{equation}
Integration in time gives \eqref{eq:Kaz-T}.

The assumptions on the initial data and the one-dimensional Sobolev
embedding imply that $v_0$ is bounded above and away from zero.
For $x\in[N,N+1]$, \eqref{eq:ent-id} gives
\begin{align*}
 \left|\int_N^x\frac u\mu\dd y\right|
 &\leq2|x-N|^{\frac12}\left(\int_N^{N+1}u^2\dd y\right)^{\frac12}
 \leq2(2E_0)^{\frac12}.
\end{align*}
The term involving $\frac{u_0}{\mu(\theta_0)}$ is estimated in the same way. Combining these bounds with \eqref{eq:hmu-bd-T} proves \eqref{eq:BNbd-T}.

To prove \eqref{eq:YNdec-T}, integrate
$\sigma(N,t)=\sigma(x,t)-\int_N^x\{\left(\frac u\mu\right)_t+h\}\dd y$
over $x\in(N,N+1)$. Young's inequality then gives
\begin{align}
 \sigma(N,t)
 &=\int_N^{N+1}\left(\frac{u_x}{v}-\frac{\theta}{\mu v}
                -\frac{|b|^2}{2\mu}\right)\dd x
 -H_N'(t)-\int_N^{N+1}(N+1-y)h\dd y\notag\\
 &\leq-\frac12\int_N^{N+1}\frac{\theta}{\mu v}\dd x
       +C\int_N^{N+1}\frac{\mu u_x^2}{v\theta}\dd x
       -\int_N^{N+1}\frac{|b|^2}{2\mu}\dd x
       -H_N'(t)+\int_N^{N+1}|h|\dd x,\label{eq:lj02-T}
\end{align}
where
\[
 H_N(t)=\int_N^{N+1}(N+1-y)\frac u\mu\dd y
\]
satisfies, by \eqref{eq:ent-id} and \eqref{eq:mu-basic},
\begin{equation}
 |H_N(t)|\leq C\norm{u(t)}_{L^2(N,N+1)}\leq C.
 \label{eq:lj01-T}
\end{equation}

By  Lemmas \ref{l:cells} and \ref{l:osc},
\begin{align}
 \int_N^{N+1}\frac{\theta}{v}\dd x& \notag
 \geq\inf_{x\in [N,N+1]}\theta(x,t)\int_N^{N+1}\frac1v\dd x\\&
 \geq \inf_{x\in [N,N+1]}\theta(x,t)
 \left(\int_N^{N+1}v\dd x\right)^{-1} \notag
\\& \geq c_0-C\int_\Omega
 \frac{\theta^{\beta-2}\theta_x^2}{v}\dd x, \label{eq:P-cell-T}
\end{align}
which, together with \eqref{eq:mu-basic} and \eqref{eq:lj02-T}, gives
\begin{align}
 \sigma(N,t)\leq{}&-a_0
 +C\int_\Omega\mathcal D_\beta\dd x
 -\int_N^{N+1}\frac{|b|^2}{2\mu}\dd x-H_N'(t)
 +\int_N^{N+1}|h|\dd x,
 \label{eq:sigavg-T}
\end{align}
for some $a_0>0$ independent of $M$. Integrating over $[s,t]$ and
using \eqref{eq:ent-id}, \eqref{eq:hmu-bd-T}, and \eqref{eq:lj01-T}, we find
\[
 \int_s^t\sigma(N,r)\dd r
 \leq C-a_0(t-s)-\int_s^t\!\int_N^{N+1}\frac{|b|^2}{2\mu}\dd x\dd r
       +C\varepsilon_{\gamma}(1+t-s).
\]
Choose $\varepsilon_*$ in \eqref{eq:small-gamma} so that the last
linear coefficient is at most $\frac{a_0}{2}$. Exponentiation yields
\begin{equation}
 \frac{Y_N(t)}{Y_N(s)}\leq C\exp\left\{-a(t-s)
 -\int_s^t\!\int_N^{N+1}\frac{|b|^2}{2\mu}\dd x\dd r\right\},
 \qquad a=\frac{a_0}{2},
 \label{eqyn1-T}
\end{equation}
which implies \eqref{eq:YNdec-T}.
\end{proof}

The identities above give the following estimate for $v$.
\begin{lemma}\label{l:Kaza-T} There exist constants $C>0$ and $a>0$ such that, for every
admissible integer $N$ and every $(x,t)\in[N,N+1]\times[0,T]$,   
\begin{align} \label{jhoa1-T} v(x,t)\le  & C\exp\left\{-a t+C\varepsilon_{\gamma}(1+t)+\frac12\int_0^t\left|\frac{|b(x,r)|^2}{\mu(x,r)}
 -\int_{ N}^{N+1}\frac{|b|^2}{\mu}\dd y \right|\dd r\right\}\notag\\&+C\int_0^t\exp\left\{-a(t-s)+C\varepsilon_{\gamma}(1+t-s)+\frac12\int_s^t\left|\frac{|b(x,r)|^2}{\mu(x,r)}
 -\int_{ N}^{N+1}\frac{|b|^2}{\mu}\dd y \right|\dd r\right\}\theta \dd s. \end{align}   \end{lemma}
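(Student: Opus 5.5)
The plan is to start from the representation formula \eqref{eq:Kaz-T} and bound the two ratios $B_N(x,t)Y_N(t)$ and $B_N(x,t)Y_N(t)/(B_N(x,s)Y_N(s))$. We have
\[
 v(x,t)=B_N(x,t)Y_N(t)+\int_0^t\frac{B_N(x,t)Y_N(t)}{B_N(x,s)Y_N(s)}\,
 \frac{\theta(x,s)+\frac12v(x,s)|b(x,s)|^2}{\mu(x,s)}\dd s .
\]
The term $\frac{v|b|^2}{2\mu}$ cannot be treated as a source, because it contains $v$ itself. So instead of \eqref{eq:Kaz-T} in this form, we will return to the linear ODE \eqref{vav1-T}. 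Writing $W=\frac{v}{B_NY_N}$, that equation reads
\[
 W_t=\frac{|b|^2}{2\mu}W+\frac{\theta}{\mu B_NY_N},\qquad W(0)=1 .
\]
Solving it with the integrating factor $\exp\{\int_s^t\frac{|b(x,r)|^2}{2\mu(x,r)}\dd r\}$ gives
\[
 v(x,t)=B_N(x,t)Y_N(t)\,\mathrm e^{\int_0^t\frac{|b|^2}{2\mu}\dd r}
 +\int_0^t\frac{B_N(x,t)Y_N(t)}{B_N(x,s)Y_N(s)}\,
 \mathrm e^{\int_s^t\frac{|b(x,r)|^2}{2\mu(x,r)}\dd r}\,\frac{\theta(x,s)}{\mu(x,s)}\dd s .
\]

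The key step is to pair this pointwise magnetic exponential with the cell-averaged magnetic decay already contained in $Y_N$. We use the sharper bound \eqref{eqyn1-T} from the proof of Lemma~\ref{l:Kaz-T}, not just \eqref{eq:YNdec-T}:
\[
 \frac{Y_N(t)}{Y_N(s)}\leq C\exp\Bigl\{-a(t-s)-\int_s^t\!\int_N^{N+1}\frac{|b|^2}{2\mu}\dd y\dd r\Bigr\}.
\]
Multiplying by $\exp\{\int_s^t\frac{|b(x,r)|^2}{2\mu}\dd r\}$, the exponent becomes $-a(t-s)+\frac12\int_s^t\bigl(\frac{|b(x,r)|^2}{\mu(x,r)}-\int_N^{N+1}\frac{|b|^2}{\mu}\dd y\bigr)\dd r$. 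This is bounded by the same expression with the absolute value, which is exactly the quantity in \eqref{jhoa1-T}.

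The $B_N$ ratio is handled by \eqref{eq:BNbd-T}, which gives the factor $C\mathrm e^{C\varepsilon_\gamma(1+t-s)}$, and $\mu\geq\frac12$ by \eqref{eq:mu-basic}. This yields the integral term in \eqref{jhoa1-T}.

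For the first term, we write $B_N(x,t)Y_N(t)=\frac{B_N(x,t)}{B_N(x,0)}\frac{Y_N(t)}{Y_N(0)}B_N(x,0)Y_N(0)$. Here $Y_N(0)=1$ and $B_N(x,0)=v_0(x)\leq C$, since $v_0$ is bounded by the Sobolev embedding, as noted in the proof of Lemma~\ref{l:Kaz-T}. The same combination with $s=0$ then gives the first term of \eqref{jhoa1-T}.

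The main point to check is that \eqref{eqyn1-T} holds for every pair $0\leq s<t\leq T$ with constants independent of $N$, $x$, $M$ and $\gamma$. It does, because it was derived by integrating \eqref{eq:sigavg-T} over an arbitrary $[s,t]$. Everything else is bookkeeping of the exponents.
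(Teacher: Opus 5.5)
Your proposal is correct and follows the paper's proof essentially step for step. You solve the linear ODE \eqref{vav1-T} with the pointwise magnetic integrating factor to obtain \eqref{eqyn2-T}, and then combine the sharper bound \eqref{eqyn1-T}, which retains the cell-averaged magnetic term, with \eqref{eq:BNbd-T}, $\mu\geq\frac12$, $Y_N(0)=1$, and $B_N(x,0)=v_0(x)\leq C$; this is exactly how the paper produces \eqref{jhoa1-T}.
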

\begin{proof}
By \eqref{vav1-T}, we have
\[
 \frac{\dd}{\dd t}\left[
 \exp\left\{-\frac12\int_0^t\frac{|b(x,r)|^2}{\mu(x,r)}\dd r\right\}
 \frac{v}{B_NY_N}\right]
 =\exp\left\{-\frac12\int_0^t\frac{|b(x,r)|^2}{\mu(x,r)}\dd r\right\}
 \frac{\theta}{\mu B_NY_N},
\]
which gives
 \begin{align}v(x,t)&=B_N(x,t)Y_N(t)
 \exp\left\{\frac12\int_0^t\frac{|b(x,r)|^2}{\mu(x,r)}\dd r\right\}\notag\\
 &\quad+\int_0^t\frac{B_N(x,t)Y_N(t)}{B_N(x,s)Y_N(s)}
 \exp\left\{\frac12\int_s^t\frac{|b(x,r)|^2}{\mu(x,r)}\dd r\right\}
 \frac{\theta(x,s)}{\mu(x,s)}\dd s.  \label{eqyn2-T}\end{align}

By \eqref{eqyn1-T},
for $x\in [N,N+1]$ and $0\leq s<t\leq T$, 
\begin{align*}
 &\frac{Y_N(t)}{Y_N(s)}
 \exp\left\{\frac12\int_s^t\frac{|b(x,r)|^2}{\mu(x,r)}\dd r\right\}\\
 & \leq C\exp\left\{-a(t-s)+C\varepsilon_{\gamma}(1+t-s)+\frac12\int_s^t\left|\frac{|b(x,r)|^2}{\mu(x,r)}
 -\int_{N}^{N+1}\frac{|b|^2}{\mu}\dd y \right|\dd r\right\} , 
\end{align*}  which together with \eqref{eq:BNbd-T}, \eqref{eq:mu-basic}, and \eqref{eqyn2-T}   gives \eqref{jhoa1-T}. 
\end{proof}

\begin{proposition}\label{p:bcap-T}
For all $0\leq s<t\leq T$,
\begin{equation}
 \int_s^t\!\int_\Omega v\theta|b|^2\dd x\dd r
 \leq C(1+t-s).                                      \label{eq:b-cap-T}
\end{equation}
\end{proposition}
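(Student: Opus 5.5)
The plan is to repeat the proof of Proposition~\ref{p:bcap}, now with $\mu=\mu(\theta)$ and under the temporary bounds \eqref{eq:boot}--\eqref{eq:small-gamma}. The goal is again
\begin{equation*}
 \int_s^t\!\int_\Omega\mathcal K_\beta(\theta)\eta\dd x\dd r\leq C(1+t-s)
\end{equation*}
with $C$ independent of $M$ and $\gamma$; \eqref{eq:vtb} and $\int_\Omega\eta\dd x\leq E_0$ then give \eqref{eq:b-cap-T}. The starting point is \eqref{eq:res-flux}, supplied by Lemma~\ref{l:res}, whose constants are independent of $M$ and $\gamma$. In the viscous term $\int_\Omega\mu Fuu_x\dd x$, I will not split off a constant part as in \eqref{eq:metric}. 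Instead, I write $\mu uu_x=\frac12(\mu u^2)_x-\frac12\mu_xu^2$ and integrate by parts using $F_x=\eta-\psi$. This yields the good term $-\frac12\int_\Omega\mu\eta u^2\dd x$, plus $\frac12\int_\Omega\mu\psi u^2\dd x$, which is bounded by \eqref{eq:resinf}, \eqref{eq:mu-basic} and \eqref{eq:ent-id}, plus the remainder $-\frac12\int_\Omega\mu_xFu^2\dd x$. The term $\frac12\int_\Omega u_xF^2\dd x$ is handled exactly as in \eqref{eq:metric}, producing $-\int uF\eta$ and $\int uF\psi$. The boundary terms vanish because $F(0,t)=0$ and by cutoffs at infinity.

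Next I obtain the analogue of \eqref{eq:res-id}, with $\mu$ in place of $\mu_1$ in the $\eta u^2$ and $\psi u^2$ terms and with $I_\alpha$ replaced by $-\frac12\int_\Omega\mu_xFu^2\dd x$. The terms $I_1,\dots,I_5$ are treated exactly as before: Young's inequality with \eqref{eq:resrem}, $\eta\geq\frac12v|b|^2$, \eqref{eq:Kpsi} for $I_3$, and $\frac12\leq\mu\leq2$. For $I_4$ and $I_5$ we need $\psi\leq C$ together with the energy bound \eqref{eq:ent-id}, using $v|b|^2\leq2\eta$. The $|b|^2$ term in $I_5$ is the one place where a lower bound on $v$ could be needed. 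I intend to avoid it by noting that $\frac12\int\psi|b|^2$ appears with $\psi$ on the right and $\frac12\int\eta|b|^2$ on the left, and by splitting $\{v<\frac12\}$, whose measure is bounded, using $|b|^2\leq\|b\|_{L^\infty}^2$ there. That route is not uniform in $M$, however, so instead I would use $\psi\leq C$ and estimate $\int_\Omega\psi|b|^2$ via the $\eta$ term. If this does not close, I would simply invoke the not-yet-optimal lower bound $v\geq c$ with $c$ independent of $M$, which I would derive first from Lemma~\ref{l:Kaz-T} (the first term in \eqref{eq:Kaz-T} together with \eqref{eq:BNbd-T}). This gives, with $c_1,c_2>0$ independent of $M$,
\begin{equation*}
 \mathcal H_t+c_1\int_\Omega\mathcal K_\beta(\theta)\eta\dd x+c_2\int_\Omega\eta u^2\dd x\leq C+\frac12\int_\Omega|\mu_x||F|u^2\dd x.
\end{equation*}

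Finally, I integrate from $s$ to $t$ and use $\|F\|_{L^\infty}\leq E_0$, \eqref{eq:mu-u2-T} and $0\leq\mathcal H\leq C$. This gives
\begin{equation*}
 \int_s^t\!\int_\Omega\mathcal K_\beta(\theta)\eta\dd x\dd r\leq C(1+t-s)+C\varepsilon_\gamma(1+t-s)\leq C(1+t-s),
\end{equation*}
since $\varepsilon_\gamma\leq\varepsilon_*\leq1$. The main obstacle is keeping every constant independent of $M$. The only term that carries $M$-dependence is the $\mu_x$ remainder, and it enters only through $\varepsilon_\gamma$. The other point to check carefully is the $I_5$ term, which relies on bounds for $\psi$ and $\eta$ alone and not on pointwise bounds for $v$ beyond those already justified independently of $M$.
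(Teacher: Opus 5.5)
Your skeleton matches the paper's. The identity you get from $\mu uu_x=\frac12(\mu u^2)_x-\frac12\mu_xu^2$ and $F_x=\eta-\psi$ is exactly \eqref{eq:metric-T}. The terms $I_1$--$I_4$ and the $u,w$ parts of $I_5$ go through as you describe. The $\mu_x$ remainder is closed with $\norm F_{L^\infty}\leq E_0$ and \eqref{eq:mu-u2-T}.

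The gap is the step you leave open yourself: the magnetic part $\frac12\int_\Omega\psi|b|^2\dd x$ of $I_5$. Saying you will ``estimate it via the $\eta$ term'' is not yet an argument. The bound $\psi\leq C$ only gives $C\int_\Omega|b|^2\dd x$, and the energy controls $\int_\Omega v|b|^2\dd x$, not $\int_\Omega|b|^2\dd x$, unless $v$ is bounded below.

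The missing observation is that a lower bound for $v$ holds automatically on a sublevel set of $\eta$. Choose $\Lambda$ with $\norm\psi_{L^\infty}\leq2\varepsilon\Lambda$. On $\{\eta\geq\Lambda\}$ you have $\psi\leq2\varepsilon\eta$. On $\{\eta<\Lambda\}$ you have $\Phi(v)\leq\eta<\Lambda$, hence $v\geq c_\Lambda>0$. Thus
\[
 \int_\Omega\psi|b|^2\dd x\leq2\varepsilon\int_\Omega\eta|b|^2\dd x+C_\varepsilon\int_\Omega v|b|^2\dd x\leq2\varepsilon\int_\Omega\eta|b|^2\dd x+C_\varepsilon,
\]
and the first term is absorbed by $\frac12\int_\Omega\eta|b|^2\dd x$ on the left. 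This is how the paper closes $I_5$, uniformly in $M$ and $\gamma$.

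Your fallback would not work. The first term of \eqref{eq:Kaz-T} only gives $v(x,t)\geq B_N(x,t)Y_N(t)$. At this stage the only information on $Y_N$ is the decay bound \eqref{eq:YNdec-T}, so combined with \eqref{eq:BNbd-T} it yields no lower bound independent of $M$ and $t$. Even with \eqref{eq:BY-lower-T} in hand, this term alone would only give $v\geq c\,\mathrm e^{-Ct}\inf_\Omega v_0$.

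A time-uniform lower bound needs the $\theta$-source integral and a lower bound for $\int_s^t\sigma(N,r)\dd r$, as in Proposition~\ref{p:vlow-T}. That lower bound uses \eqref{eq:v-up-T} to control $\int_s^t\!\int_N^{N+1}|u_x|\dd x\dd r$ uniformly in $M$. Since \eqref{eq:v-up-T} is derived from \eqref{eq:bP-mu-T}, and hence from the present proposition, invoking $v\geq c$ here would be circular.
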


\begin{proof}
The two terms involving $u_x$ satisfy
\begin{align}
 \int_\Omega \mu Fuu_x\dd x+\frac12\int_\Omega u_xF^2\dd x
 ={}&-\frac12\int_\Omega\mu\eta u^2\dd x
 -\int_\Omega uF\eta\dd x\notag\\
 &+\frac12\int_\Omega\mu\psi u^2\dd x
 +\int_\Omega uF\psi\dd x-\frac12\int_\Omega\mu_xFu^2\dd x. \label{eq:metric-T}
\end{align}
Using \eqref{eq:res-flux}, we obtain
\begin{align}
 \mathcal H_t
 &+\int_\Omega\mathcal K_\beta(\theta)\eta\dd x
 +\frac12\int_\Omega\eta\bigl(\mu u^2+|w|^2+|b|^2\bigr)\dd x
 +\int_\Omega\mathcal D_\beta\psi\dd x\notag\\
 &=\int_\Omega Fu
 \left(v-\theta-\frac12v|b|^2-\eta\right)\dd x
 +\int_\Omega Fv\,w\cdot b\dd x \notag\\
 &\quad+\int_\Omega\mathcal K_\beta(\theta)\psi\dd x
 +\int_\Omega uF\psi\dd x
 +\frac12\int_\Omega
 \psi\bigl(\mu u^2+|w|^2+|b|^2\bigr)\dd x\notag\\&\quad-\frac12\int_\Omega\mu_xFu^2\dd x\notag\\&\triangleq \sum_{i=1}^5I_i-\frac12\int_\Omega\mu_xFu^2\dd x.             \label{eq:res-id-T}
\end{align}

Young's inequality, \eqref{eq:resrem},
and \eqref{eq:resinf}--\eqref{eq:resst}
give
\begin{align*}
 |I_1|
 &\leq\varepsilon\int_\Omega\mu\eta u^2\dd x
 +C_\varepsilon\int_\Omega F^2
 \frac{\left|v-\theta-\frac12v|b|^2-\eta\right|^2}{\eta}
 \dd x\\
 &\leq\varepsilon\int_\Omega\mu\eta u^2\dd x+C_\varepsilon,\\
 |I_2|
 &\leq\varepsilon\int_\Omega\eta|w|^2\dd x
 +C_\varepsilon\int_\Omega vF^2\dd x\\
 &\leq\varepsilon\int_\Omega\eta|w|^2\dd x+C_\varepsilon,\\
 |I_4|&\leq C,\\ |I_5|&\leq\varepsilon\int_\Omega\eta|b|^2\dd x+C_\varepsilon.
\end{align*}
Here the quotients are defined to be zero on $\{\eta=0\}$.
For $I_2$, we used $\eta\geq\frac{v|b|^2}{2}$; for $I_4$, use \eqref{eq:ent-id} and \eqref{eq:resinf}--\eqref{eq:resst}.
For the magnetic part of $I_5$, choose $\Lambda$ so large that
$\frac{\norm\psi_{L^\infty}}{\Lambda}\leq2\varepsilon$. On $\{\eta<\Lambda\}$,
$\Phi(v)\leq\Lambda$ implies $v\geq c_\Lambda>0$, whereas on
$\{\eta\geq\Lambda\}$ the integral is absorbed by the $\eta|b|^2$ term.
Thus
\[
 \int_\Omega\psi|b|^2\dd x
 \leq2\varepsilon\int_\Omega\eta|b|^2\dd x
    +C_\varepsilon\int_\Omega v|b|^2\dd x.
\]
The parts involving $u,w$ follow directly from \eqref{eq:ent-id}.
Also, \eqref{eq:resinf} gives
\[
 \left|\frac12\int_\Omega\mu_xFu^2\dd x\right|
 \leq C\int_\Omega|\mu_x|u^2\dd x.
\]

Using \eqref{eq:Kpsi} for $I_3$, choose $\varepsilon$ sufficiently small and absorb the corresponding
terms in $I_1$, $I_2$, $I_3$, and $I_5$ into the left-hand side of
\eqref{eq:res-id-T}.
Dropping the remaining nonnegative terms, we obtain
\begin{equation}
 \mathcal H_t+c\int_\Omega
 \mathcal K_\beta(\theta)\eta\dd x\leq C+C\int_\Omega|\mu_x|u^2\dd x.          \label{eq:HK-eta-T}
\end{equation}

Integrating \eqref{eq:HK-eta-T} from $s$ to $t$ and using
$0\leq\mathcal H\leq C$ and \eqref{eq:mu-u2-T}, we obtain
\[
 \int_s^t\!\int_\Omega\mathcal K_\beta(\theta)\eta\dd x\dd r
 \leq C+C(t-s)+C\varepsilon_{\gamma}(1+t-s)\leq C(1+t-s).
\]
Together with \eqref{eq:vtb} and
$\int_\Omega\eta\dd x\leq E_0$, this proves
\eqref{eq:b-cap-T}.
\end{proof}
By the Cauchy--Schwarz inequality, \eqref{eq:b-cap-T}, and
\eqref{eq:ent-id}, for every admissible integer $N$ we have
\begin{align*}
 &\int_s^t\!\int_N^{N+1}|b||b_y|\dd y\dd r\\
 &\quad\leq\left(\int_s^t\!\int_\Omega v\theta|b|^2\dd x\dd r\right)^{\frac12}
 \left(\int_s^t\!\int_\Omega\frac{|b_x|^2}{v\theta}\dd x\dd r\right)^{\frac12}
 \leq C\sqrt{1+t-s}.
\end{align*}
Since
\[
 \left(\frac{|b|^2}{2\mu}\right)_x
 =\frac{b\cdot b_x}{\mu}-\frac{|b|^2\mu_x}{2\mu^2},
\]
integration in space and time, together with \eqref{eq:mu-basic} and
\eqref{eq:mu-b2-T}, gives
\begin{equation}
 \int_s^t\left|\frac{|b(x,r)|^2}{2\mu(x,r)} -\int_N^{N+1}\frac{|b(y,r)|^2}{2\mu(y,r)}\dd
 y\right|\dd r \leq C\sqrt{1+t-s}+C\varepsilon_{\gamma}(1+t-s).
 \label{eq:bP-mu-T}
\end{equation}

\begin{proposition}\label{p:v-up-T}
For every fixed $\beta\geq0$, there exists a constant $C>0 $ 
 such that for any $(x,t)\in\Omega\times [0,T]$ 
\begin{equation}
 v(x,t)\leq C.
                 \label{eq:v-up-T}
\end{equation}
\end{proposition}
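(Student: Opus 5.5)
The plan is to insert \eqref{eq:bP-mu-T} into the representation formula \eqref{jhoa1-T} of Lemma~\ref{l:Kaza-T}. Then choose $\varepsilon_*$ small enough that the linear growth produced by $\varepsilon_\gamma$ is dominated by the decay rate $a$. The square-root term is absorbed by the remaining negative linear term.

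First, by \eqref{eq:bP-mu-T}, for $x\in[N,N+1]$ and $0\leq s<t\leq T$,
\[
 \frac12\int_s^t\left|\frac{|b(x,r)|^2}{\mu(x,r)}
 -\int_N^{N+1}\frac{|b|^2}{\mu}\dd y\right|\dd r
 \leq C\sqrt{1+t-s}+C\varepsilon_\gamma(1+t-s).
\]
The combined exponent in \eqref{jhoa1-T} is therefore at most $-a(t-s)+C\varepsilon_\gamma(1+t-s)+C\sqrt{1+t-s}$. Shrink $\varepsilon_*$ in \eqref{eq:small-gamma} so that $C\varepsilon_\gamma\leq C\varepsilon_*\leq a/4$; this constant depends only on the data and coefficients, not on $M$. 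The exponent is then bounded by $C-\frac a2(t-s)+C\sqrt{1+t-s}\leq C-\frac a4(t-s)$, using $\sup_{\tau\ge0}\{-\frac a4\tau+C\sqrt{1+\tau}\}\leq C$. Consequently
\[
 v(x,t)\leq C\mathrm e^{-\frac a4 t}+C\int_0^t\mathrm e^{-\frac a4(t-s)}\theta(x,s)\dd s .
\]

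The main obstacle is the integral term, because no uniform upper bound for $\theta$ is yet available. I would use the oscillation argument from Proposition~\ref{p:v-up}. Start from a point where $\theta$ equals its cell average, which lies in $[m_0,M_0]$ by \eqref{eq:cell}. Using the lower bound $\theta\geq c$, which follows from Proposition~\ref{p:tlow} once $v\geq c$, this gives
\[
 \theta(x,s)\leq C+C D_\theta(s)\max_{[N,N+1]}v(\cdot,s).
\]

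I would avoid a circular dependence on the lower bound for $v$ as follows. Avoid it by using instead $\sqrt{\theta}\le C+\int_N^{N+1}|\theta_x|\theta^{-1/2}$ with Cauchy--Schwarz weighted by $v$; this needs only \eqref{eq:cell} and $D_\theta$ for $\beta\le1$, and for general $\beta$ the bound $\theta^{\beta-2}\ge c\theta^{-1}$ fails only on the bounded-temperature region. If that is too delicate, order the steps differently: first obtain the lower bound for $v$ from \eqref{eq:Kaz-T} via \eqref{eq:BNbd-T}, which gives $v\geq B_NY_N\geq c\,\mathrm e^{-C t}$ only locally. So it is cleaner to accept a dependence of $\theta$'s lower bound through a crude estimate.

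Set $V(t)=\sup_{\Omega\times[0,t]}v$. Then
\[
 V(t)\leq C+C\sup_{t'\le t}\int_0^{t'}\mathrm e^{-\frac a4(t'-s)}\bigl(1+D_\theta(s)\bigr)\dd s\;V(t).
\]
This inequality is not directly closable. Instead I would use the Gronwall form. Write $y(t)=\max_{[N,N+1]}v$; the linear integral inequality $y(t)\leq C+C\int_0^t\mathrm e^{-\frac a4(t-s)}D_\theta(s)y(s)\dd s$ gives $y\leq C\exp\{C\int_0^\infty D_\theta\}\leq C$ by \eqref{eq:ent-id}. The constants are independent of $M$, $T$, $N$, which proves \eqref{eq:v-up-T}.
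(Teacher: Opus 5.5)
\textbf{The gap: your pointwise bound for $\theta$ is circular.} Your first step and your last step coincide with the paper. The first step inserts \eqref{eq:bP-mu-T} into \eqref{jhoa1-T}, shrinks $\varepsilon_*$ independently of $M$, and absorbs $C\sqrt{1+t-s}$ into the negative linear term. The last step is Gronwall for $\max_{[N,N+1]}v$ using $\int_0^T D_\theta\dd t\leq E_0$. The problem is the middle step, the pointwise bound $\theta(x,s)\leq C+CD_\theta(s)\max_{[N,N+1]}v(\cdot,s)$, which you derive from $\theta\geq c$. In case $\eqref{eq:viscosity}_2$ that lower bound is not available at this point:
\begin{itemize}
\item Proposition~\ref{p:tlow} needs $v\geq c_1$ with $c_1$ independent of $T$ and $M$. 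The paper obtains this only in Proposition~\ref{p:vlow-T}, whose proof uses \eqref{eq:v-up-T} to bound $\int_s^t\!\int_N^{N+1}|u_x|\dd x\dd r$ by $C\sqrt{t-s}$.
\item The bootstrap bound $\theta\geq M^{-1}$ from \eqref{eq:boot} is also excluded. A constant depending on $M$ breaks the requirement $C_*\leq M_*/4$ in the continuation argument leading to \eqref{eq:gamma-choice}.
\end{itemize}
Your workarounds do not close the gap:
\begin{itemize}
\item The $\sqrt\theta$ estimate can be made to work for $\beta\leq1$, by leaving the weight $v\theta^{1-\beta}\leq v(1+\theta)$ after Cauchy--Schwarz.
\item For $\beta>1$ that weight is unbounded as $\theta\to0$. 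The truncation you allude to via ``$\theta^{\beta-2}\geq c\theta^{-1}$'' is never carried out.
\item Your fallback, accepting a crude lower bound for $\theta$ obtained from the local bound $v\geq c\,\mathrm e^{-Ct}$, makes the final constant depend on $t$. That is exactly what the proposition must avoid.
\end{itemize}

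\textbf{The missing idea: differentiate $\theta^{\frac{\beta+1}{2}}$ instead of $\sqrt\theta$.} Start from the point $y_N(t)$ where $\theta$ equals its cell average, as the paper does, and note that $\frac{\beta+1}{2}\theta^{\frac{\beta-1}{2}}|\theta_y|=\frac{\beta+1}{2}\bigl(\theta^{\frac{\beta-2}{2}}|\theta_y|v^{-\frac12}\bigr)(v\theta)^{\frac12}$. Cauchy--Schwarz and \eqref{eq:cell} then give $\theta(x,t)^{\frac{\beta+1}{2}}\leq C+C\{D_\theta(t)\max_{[N,N+1]}v\}^{\frac12}$, with no lower bound on $\theta$ needed. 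Since $\frac{2}{\beta+1}\leq2$, this yields $\theta\leq C+CD_\theta\max_{[N,N+1]}v$. Equivalently, you can run your $\sqrt\theta$ argument on $(\sqrt\theta-1)_+$; on its support $\theta^{-2}\leq\theta^{\beta-2}$ for every $\beta\geq0$. With either bound in place, your Gronwall step goes through as written.
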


\begin{proof} 
Substituting   \eqref{eq:bP-mu-T} into
\eqref{jhoa1-T}, choosing $\varepsilon_*$ smaller if necessary, and
using $C\sqrt{1+\tau}\leq \frac{a\tau}{4}+C$, we obtain, for every
admissible integer $N$ and every $(x,t)\in[N,N+1]\times[0,T]$,  
\begin{align}
 v(x,t) \leq C+C\int_0^t\mathrm e^{-\frac{a(t-s)}{2}}\theta(x,s)\dd s.
                                                               \label{eq:vuconv-T}
\end{align}

For every $x\in[N,N+1]$, choose $y_N(t)$ as in the proof of
Lemma~\ref{l:osc}. The fundamental theorem of calculus gives
\begin{align*}
 \theta(x,t)^{\frac{\beta+1}{2}}
 &\leq\theta(y_N(t),t)^{\frac{\beta+1}{2}}
       +\frac{\beta+1}{2}\int_N^{N+1}
                         \theta^{\frac{\beta-1}{2}}|\theta_y|\dd y\\
 &\leq C+C\left(\int_\Omega
                \frac{\theta^{\beta-2}\theta_y^2}{v}\dd y\right)^{\frac{1}{2}}
             \left(\int_N^{N+1}v\theta\dd y\right)^{\frac{1}{2}}\\
 &\leq C+C\left\{D_\theta(t)
                    \max_{y\in[N,N+1]}v(y,t)\right\}^{\frac{1}{2}}.
\end{align*}
Since $\beta\geq0$,
\[
 \theta(x,t)\leq C+CD_\theta(t)\max_{y\in [N,N+1]}v(y,t).
\]
Substituting this into \eqref{eq:vuconv-T} and
taking the maximum over $[N,N+1]$, we obtain
\[
 \max_{x\in [N,N+1]}v(x,t)
 \leq C+C\int_0^tD_\theta(s)\max_{y\in [N,N+1]}v(y,s)\dd s.
\]
Gronwall's inequality and \eqref{eq:ent-id} yield
\[
 \max_{x\in [N,N+1]}v(x,t)
 \leq C\exp\left\{C\int_0^tD_\theta(s)\dd s\right\}\leq C.
\]
The constant is independent of $N$ and $t$.
Since $N$ is arbitrary, \eqref{eq:v-up-T} follows.
\end{proof}
\begin{proposition}\label{p:vlow-T}
There exists a constant $c>0$, independent of time, such that
\begin{equation}
 v(x,t)\geq c,
 \qquad x\in\Omega,\quad 0\leq t\leq T.                    \label{eq:vlow-T}
\end{equation}
\end{proposition}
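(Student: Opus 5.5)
We obtain the lower bound from the representation formula \eqref{eq:Kaz-T} of Lemma~\ref{l:Kaz-T}. Since $\theta\geq0$ and $v|b|^2\geq0$, and since $\mu$ and $B_N$ are positive, the bracket in \eqref{eq:Kaz-T} is at least
\[
 1+\int_0^t\frac{\theta(x,s)}{\mu B_N(x,s)Y_N(s)}\dd s .
\]
It therefore suffices to bound $B_N(x,t)Y_N(t)$ times this quantity from below, uniformly in $N$, $x$ and $t$. We use \eqref{eq:BNbd-T} with $s=0$ and the fact that $v_0$ is bounded above and away from zero. This gives
\[
 B_N(x,t)\geq c\,\mathrm e^{-C\varepsilon_\gamma(1+t)},\qquad
 \frac{B_N(x,t)}{B_N(x,s)}\geq c\,\mathrm e^{-C\varepsilon_\gamma(1+t-s)} .
\]
For $Y_N$ we need a lower bound on $Y_N(t)/Y_N(s)$ rather than the upper bound \eqref{eq:YNdec-T}.

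\textbf{Lower bound for $Y_N$.} We go back to the identity before \eqref{eq:lj02-T}, namely
\[
 \sigma(N,t)=\int_N^{N+1}\sigma\dd x-H_N'(t)-\int_N^{N+1}(N+1-y)h\dd y,
\]
and now bound it from below. Using \eqref{eq:mu-basic} and Young's inequality,
\[
 \int_N^{N+1}\frac{u_x}{v}\dd x\geq-C\int_N^{N+1}\frac{\mu u_x^2}{v\theta}\dd x-C\int_N^{N+1}\frac{\theta}{v}\dd x .
\]
This reduces the task to bounding $\int_s^t\int_N^{N+1}(\theta/v+|b|^2)\dd x\dd r$ from above. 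Here we want to avoid using a lower bound for $v$, which is what we are proving. The magnetic part is fine: by \eqref{eq:boot}, $\int_N^{N+1}|b|^2\leq M\int v|b|^2\leq CM$.

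The term $\theta/v$ is the difficulty, since it is not obviously $O(t-s)$ without a lower bound on $v$. Plan A is to avoid needing a lower bound for $Y_N$ altogether. Write
\[
 Y_N(t)\int_0^t\frac{\theta}{\mu B_NY_N(s)}\dd s=\int_0^t\frac{Y_N(t)}{Y_N(s)}\frac{\theta(x,s)}{\mu B_N(x,s)}\dd s ,
\]
and observe that $Y_N(t)/Y_N(s)=\exp\{\int_s^t\sigma(N,r)\dd r\}$. Using \eqref{eq:lj02-T}-type bounds in reverse, one gets
\[
 \int_s^t\sigma(N,r)\dd r\geq -C-C\int_s^t\!\int_N^{N+1}\Bigl(\frac{\theta}{v}+|b|^2\Bigr)\dd x\dd r-C\varepsilon_\gamma(1+t-s).
\]

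Plan B, which I would actually pursue, is to mimic Proposition~\ref{p:vlow}. Set
\[
 R(t)=\log v(x,t)-\int_N^{N+1}\log v\dd y-\int_N^x\frac u\mu\dd y .
\]
From \eqref{eq:sigma-T} and $(\log v)_t=u_x/v$,
\[
 R_t=\Bigl(\frac{\theta}{\mu v}+\frac{|b|^2}{2\mu}\Bigr)(x)-\int_N^{N+1}\Bigl(\frac{\theta}{\mu v}+\frac{|b|^2}{2\mu}\Bigr)\dd y+\text{($h$ terms)}.
\]
To handle the cell average I would use $\int_N^{N+1}\log v\leq\log M_0$ from \eqref{eq:cell}, together with an upper bound for $\int_N^{N+1}\theta/v$. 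The latter is the real obstacle.

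\textbf{Obstacle and fix.} To control that average I would instead use the weighted average exactly as in Lemma~\ref{l:Kaz}, with weight $Z_N=\exp\{\log v-\int_N^x u/\mu\}$. Since $\mu\in[1/2,2]$ and \eqref{eq:v-up-T} is now available, this gives $Z_N/v\leq C$, $\overline Z_N\sim1$, and a weighted mean of $\theta/v+|b|^2/2$ that is $\leq C\int(\theta+v|b|^2)\leq C$. The resulting evolution for $\log(Z_N/\overline Z_N)$ is \eqref{eq:Z-evol}, with $1/\mu$ factors and an extra source controlled by $\int|h|+|\mu_x|(\cdots)$, which is $\leq C\varepsilon_\gamma(1+t-s)$ by \eqref{eq:hmu-bd-T}, \eqref{eq:mu-b2-T}.

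Then comes the argument of Proposition~\ref{p:vlow}. Outside $\mathcal B$, where $\theta\geq c_\theta$ by \eqref{eq:good-two}, and when $v$ is small, the derivative is at least $c/v-C-(\text{error})\geq c$ for a fixed large threshold $L$. Inside $\mathcal B$ the derivative is $\geq -C-\text{error}$. Integrating over a connected component of the region $\{\log(Z_N/\overline Z_N)<-L\}$ gives
\[
 \log\frac{Z_N}{\overline Z_N}\geq -L-C|\mathcal B|-C\varepsilon_\gamma(1+t-s)+c\,|(s,t)\setminus\mathcal B| .
\]
Here the positive drift outside $\mathcal B$ absorbs the $\varepsilon_\gamma(t-s)$ loss once $\varepsilon_*$ is small, because $|(s,t)\setminus\mathcal B|\geq t-s-C$. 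This yields $Z_N\geq c$, hence $v\geq c$ with a constant independent of $M$, $\gamma$ and $T$.
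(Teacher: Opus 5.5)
Your final argument, the weighted ratio in ``Obstacle and fix'', is correct, but it takes a different route from the paper.

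\textbf{The paper's route.} The paper keeps the representation formula \eqref{eq:Kaz-T} and proves the missing bound $\int_s^t\sigma(N,r)\dd r\geq-C(1+t-s)$. It does so by differentiating the $v$-weighted cell quantity $V_NA_N=\int_N^{N+1}vU_N\dd x$. The weight $v$ turns the cell pressure $\theta/(\mu v)$ into $\theta/\mu$, which removes exactly the obstacle you identified. The term $\int_N^{N+1}u_x(U_N-A_N)\dd x$ and the $h$-term are then controlled using the upper bound \eqref{eq:v-up-T}. This gives $\frac{B_N(x,t)Y_N(t)}{B_N(x,s)Y_N(s)}\geq c\,\mathrm e^{-C(t-s)}$. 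The formula is then started at $s=t-L$ with $L=1+E_0/\delta$, so that $[t-L,t]$ contains at least unit measure of times where $\theta\geq m_\theta/2$. The crude exponential rate is harmless on a window of bounded length, and no further smallness of $\varepsilon_\gamma$ is needed.

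\textbf{Your route.} You transplant the Kazhikhov ratio of Lemma~\ref{l:Kaz}, taking $Z_N=v\exp\{-\int_N^x u/\mu\dd y\}$.
\begin{itemize}
\item The endpoint stress $\sigma(N,t)$ cancels identically in $\partial_t\log(Z_N/\overline Z_N)$, so you never need a lower bound for $Y_N$.
\item The same $v$-weighting, now through $c\,v\leq Z_N\leq Cv$, bounds the weighted mean of $\frac{\theta}{\mu v}+\frac{|b|^2}{2\mu}$ by the energy.
\item The only new source is $\int_N^x h\dd y$ minus its weighted mean. Its time integral is at most $C\varepsilon_\gamma(1+t-s)$ by \eqref{eq:hmu-bd-T}. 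No $\mu_x$ terms actually arise.
\end{itemize}

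\textbf{What each buys.} The price of your route is one more absorption: the $\varepsilon_\gamma(t-s)$ loss must be beaten by the unit drift on $(s,t)\setminus\mathcal B$, so $\varepsilon_*$ must be shrunk once more. This is legitimate, because $L$ and the drift depend only on $c_\theta$ and energy-level constants, not on $M$ or $\gamma$. The gain is that your argument never uses \eqref{eq:v-up-T}. The bounds on $U_N$, $\overline Z_N$ and the weighted mean all follow from \eqref{eq:ent-id} and \eqref{eq:cell}, so your invocation of the upper bound is superfluous. It also treats both viscosity laws by the same mechanism as Proposition~\ref{p:vlow}.

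\textbf{Two small points to tidy.}
\begin{itemize}
\item Do not claim the derivative is at least $c$ pointwise in time, since $\int_N^{N+1}|h|\dd y$ is only integrable in time. Your final integrated inequality already handles this correctly.
\item As in Proposition~\ref{p:vlow}, integrate over components of $\{R_N<\min(-L,R_N(0))\}$. Alternatively, enlarge $L$ so that $R_N(0)\geq-L$ uniformly, which is possible since $\inf v_0>0$.
\end{itemize}
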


\begin{proof}
For an admissible $N$, set
\[
 V_N(t)=\int_N^{N+1}v\dd x,\qquad
 U_N(x,t)=\int_N^x\frac u\mu\dd y,\qquad
 A_N(t)=\frac1{V_N(t)}\int_N^{N+1}vU_N\dd x.
\]
By \eqref{eq:cell}, \eqref{eq:mu-basic}, and \eqref{eq:ent-id},
\[
 m_0\leq V_N(t)\leq M_0,\qquad
 \norm{U_N(t)}_{L^\infty(N,N+1)}+|A_N(t)|\leq C.
\]
Equation~\eqref{eq:sigma-T} gives
\[
 (U_N)_t=\sigma(x,t)-\sigma(N,t)-\int_N^x h\dd y.
\]
Differentiating $V_NA_N=\int_N^{N+1}vU_N\dd x$
(cf.\ \cite{HuangShiSun2021,SongZhao2025}) and using $v_t=u_x$,
we obtain
\begin{align}
 \sigma(N,t)={}&\frac{V_N'}{V_N}-A_N'
 -\frac1{V_N}\int_N^{N+1}\frac{\theta+\frac{v|b|^2}{2}}{\mu}\dd x\notag\\
 &+\frac1{V_N}\int_N^{N+1}u_x(U_N-A_N)\dd x
 -\frac1{V_N}\int_N^{N+1}v(x,t)\int_N^x h(y,t)\dd y\dd x.
 \label{eq:weighted-cell-T}
\end{align}
The pressure integral is bounded by \eqref{eq:cell} and
\eqref{eq:ent-id}. For the fourth term, \eqref{eq:v-up-T} yields
\[
 \int_s^t\!\int_N^{N+1}|u_x|\dd x\dd r \leq\left(\int_s^t\!\int_N^{N+1} \frac{\mu
 u_x^2}{v\theta}\dd x\dd r\right)^{\frac{1}{2}} \left(\int_s^t\!\int_N^{N+1}
 \frac{v\theta}{\mu}\dd x\dd r\right)^{\frac{1}{2}} \leq C\sqrt{t-s}.
\]
The last term is controlled by \eqref{eq:hmu-bd-T}.
Integrating \eqref{eq:weighted-cell-T} from $s$ to $t$, we therefore have
\begin{align*}
 \int_s^t\sigma(N,r)\dd r
 &\geq\log\frac{V_N(t)}{V_N(s)}-A_N(t)+A_N(s)
       -C(t-s)-C\sqrt{t-s}-C\varepsilon_{\gamma}(1+t-s)\\
 &\geq-C(1+t-s).
\end{align*}
Together with \eqref{eq:BNbd-T}, this gives
\begin{equation}
 \frac{B_N(x,t)Y_N(t)}{B_N(x,s)Y_N(s)}
 \geq c\mathrm e^{-C(t-s)},\qquad 0\leq s<t\leq T.
 \label{eq:BY-lower-T}
\end{equation}
The constants in \eqref{eq:BY-lower-T} are independent of the
pointwise lower bound for $v$ in \eqref{eq:boot}.

Choose $\delta>0$ so small that \eqref{eq:osclin} gives
$\theta(x,r)\geq \frac{m_\theta}{2}$ whenever $D_\theta(r)\leq\delta$,
and set
\[
 \mathcal B=\{r\in[0,T]:D_\theta(r)>\delta\},
 \qquad L=1+\frac{E_0}{\delta}.
\]
By \eqref{eq:ent-id}, $|\mathcal B|\leq \frac{E_0}{\delta}$.
The representation formula \eqref{eq:Kaz-T}, started at any
$s\in[0,t]$, reads
\[
 v(x,t)={}\frac{B_N(x,t)Y_N(t)}{B_N(x,s)Y_N(s)}v(x,s)
 +\int_s^t\frac{B_N(x,t)Y_N(t)}{B_N(x,r)Y_N(r)} \frac{\theta+\frac{v|b|^2}{2}}{\mu}(x,r)\dd r.
\]
If $t\geq L$, the set $[t-L,t]\setminus\mathcal B$ has measure at
least one. Taking $s=t-L$, dropping the nonnegative initial and
magnetic terms, and using \eqref{eq:BY-lower-T}, we find
\[
 v(x,t)\geq c\int_{[t-L,t]\setminus\mathcal B}
                   \mathrm e^{-C(t-r)}\theta(x,r)\dd r
          \geq c m_\theta\mathrm e^{-CL}.
\]
For $0\leq t<L$, the initial term with $s=0$ gives
$v(x,t)\geq c\mathrm e^{-CL}\inf_\Omega v_0>0$.
This proves \eqref{eq:vlow-T}.
\end{proof}

By \eqref{eq:mu-basic} and Proposition~\ref{p:vlow-T},
Proposition~\ref{p:tlow} gives \eqref{eq:t-low}.

\section{Uniform estimates for higher derivatives}\label{sec:high}

By Propositions~\ref{p:vlow}, \ref{p:v-up}, and \ref{p:vlow-T},
\ref{p:v-up-T}, as applicable, and Proposition~\ref{p:tlow}, we have
\begin{equation}
 c\leq v\leq C,\qquad \theta\geq c,\qquad c\leq\mu\leq C
 \quad\hbox{on }\Omega\times[0,T].                    \label{eq:state}
\end{equation}
For $\eqref{eq:viscosity}_1$, $|\mu'(v)|\leq C$ also follows.
For $\eqref{eq:viscosity}_2$, the bounds on $\mu$ use \eqref{eq:mu-basic}.

\begin{proposition}\label{p:cap}
For every fixed $\beta\geq0$,
\begin{equation}
\sup_{0\leq t\leq T}\int_\Omega
 \{u^2+|w|^2+ |b|^2 \}\dd x+ \int_0^T\!\int_\Omega
 \{\mu u_x^2+|w_x|^2+|b_x|^2\}\dd x\dd t\leq C.
                                                               \label{eq:cap}
\end{equation}
If $\beta>0$, then also
\begin{align}
 &\int_0^T\!\int_\Omega
 \frac{\theta_x^2}{\theta}\dd x\dd t\leq C.
                                                               \label{eq:cappos}
\end{align}
\end{proposition}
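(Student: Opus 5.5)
The plan is to combine the mechanical energy identity with the temperature equation \eqref{eq:temp} tested against a bounded, nondecreasing function $g_\beta(\theta)$. This produces a time-independent Lyapunov-type inequality whose dissipation contains $\mu u_x^2+|w_x|^2+|b_x|^2$ with an unweighted coefficient.

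\emph{Mechanical energy.} Multiplying \eqref{eq:mom}--\eqref{eq:ind} by $u,w,b$ and using $v_t=u_x$ together with the identity for $b\cdot(vb)_t$ gives
\[
\frac{\dd}{\dd t}\int_\Omega\frac{u^2+|w|^2+v|b|^2}{2}\dd x+\int_\Omega\frac{\mu u_x^2+|w_x|^2+|b_x|^2}{v}\dd x=\int_\Omega\frac{\theta u_x}{v}\dd x .
\]
The magnetic pressure work cancels against the $\frac12|b|^2u_x$ term. The boundary terms vanish as in Lemma~\ref{l:ent}. By \eqref{eq:state}, $v$ is bounded above and below, so the $v$-weights on the left-hand side are harmless.

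\emph{Choice of $g_\beta$.} I would take $g_\beta(\theta)=2(1-\frac1\theta)$ on $\{\theta\le\theta_1\}$ for a fixed $\theta_1\ge2$. For $\theta>\theta_1$, I would continue it as a bounded increasing function with $g_\beta'(\theta)\simeq\theta^{-1-\beta}$. For $\beta>0$ this is integrable at infinity, so $g_\beta$ stays bounded. For $\beta=0$, I would instead let $g_\beta$ saturate at a constant, and leave the $\theta_x$ estimate to the separate argument announced for Proposition~\ref{p:hot0}.

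Multiplying \eqref{eq:temp} by $g_\beta(\theta)$ and setting $G_\beta'=g_\beta$, the conduction term gives the good quantity $\int_\Omega g_\beta'(\theta)\theta^\beta\theta_x^2/v$. With this choice it is comparable to $\int_\Omega\theta_x^2/\theta$ on the hot region when $\beta>0$, and to the entropy dissipation $D_\theta$ on $\{\theta\le\theta_1\}$. This is the source of \eqref{eq:cappos}.

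\emph{Balancing the heating terms.} The heating term $g_\beta\cdot(\mu u_x^2+|w_x|^2+|b_x|^2)/v$ enters with a sign opposite to the mechanical dissipation. On $\{\theta\le\theta_1\}$ I would bound it by $C\mathcal D_\beta$, using $\theta\ge c$ from \eqref{eq:state}, so its time integral is controlled by \eqref{eq:ent-id}. The pressure work splits as $\int_\Omega(1-g_\beta)\theta u_x/v$. On $\{\theta\le\theta_1\}$ the factor $(1-g_\beta)\theta=2-\theta$ is bounded. Here I would write $u_x/v=(\log v)_t$ and use $\int_\Omega u_x=0$, so that $\int_\Omega u_x/v=-\frac{\dd}{\dd t}\int_\Omega\Phi(v)$. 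The remaining part $\int_\Omega(1-\theta)u_x/v$ is bounded by $\varepsilon\int_\Omega u_x^2+C\int_\Omega(\theta-1)^2$ restricted to moderate temperatures. That last integral is controlled in time by $\mathcal D_\beta$ together with the cell-oscillation bounds of Lemma~\ref{l:osc} and Lemma~\ref{l:cells}, in the spirit of \eqref{eq:osclin}.

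\emph{Main obstacle: the hot region $\{\theta>\theta_1\}$.} There both the heating term and the pressure work involve unbounded $\theta$. My first attempt would be to multiply the mechanical identity by a large constant $\Lambda$. The dissipation then becomes $\Lambda\int_\Omega(\mu u_x^2+\dots)/v$, which dominates $g_\beta\le\sup g_\beta$ times the heating.

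The real difficulty is the term $\int_{\{\theta>\theta_1\}}\theta u_x/v$, which is only bounded by $\varepsilon\int_\Omega u_x^2+C\int_{\{\theta>\theta_1\}}\theta^2$. This requires $L^2_tL^2_x$ control of $\theta$ on the hot set before any upper bound for $\theta$ is available. I would try to obtain it from $\int_\Omega\theta_x^2/\theta$, which the $g_\beta$-conduction term provides when $\beta>0$. Combining this with the measure and mass bounds \eqref{eq:hot} and \eqref{eq:GN} applied to $(\sqrt\theta-\sqrt{\theta_1})_+$ should give
\[
\int_{\{\theta>\theta_1\}}\theta^2\dd x\le C\Bigl(\int_\Omega\frac{\theta_x^2}{\theta}\dd x\Bigr)\Bigl(\int_{\{\theta>\theta_1\}}\theta\dd x\Bigr)\le C_{\theta_1}\int_\Omega\frac{\theta_x^2}{\theta}\dd x .
\]
Absorbing this requires choosing $\Lambda$ large and $\theta_1$ large in the right order, which I expect to be the most delicate bookkeeping.

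\emph{Closing the estimate.} Once these terms are absorbed, integrating in time over $[0,T]$ gives \eqref{eq:cap} and, for $\beta>0$, \eqref{eq:cappos}. The bounds are uniform in $T$ because $G_\beta(\theta)\le C(1+\Phi(\theta))$ and $\int_\Omega\Phi(v)\le E_0$ are controlled by \eqref{eq:ent-id}. Finally, $\sup_t\int_\Omega(u^2+|w|^2+|b|^2)$ follows directly from \eqref{eq:ent-id} and $v\ge c$.
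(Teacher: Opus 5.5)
Your profile $g_\beta=2(1-\theta^{-1})$ on moderate temperatures is the paper's, but the weight you put on the mechanical energy is wrong, and this is exactly where time-independence is lost. Testing \eqref{eq:temp} with $g_\beta$ and adding $\Lambda$ times your mechanical identity leaves the pressure work $\int_\Omega(\Lambda-g_\beta(\theta))\theta u_x/v\dd x$. On $\{\theta\le\theta_1\}$ one has $(\Lambda-g_\beta)\theta=(\Lambda-2)(\theta-1)+\Lambda$. The constant $\Lambda$ is harmless, since $\int_\Omega u_x/v\dd x=-\frac{\dd}{\dd t}\int_\Omega\Phi(v)\dd x$.

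For every $\Lambda\neq2$, however, the term $(\Lambda-2)\int_{\{\theta\le\theta_1\}}(\theta-1)u_x/v\dd x$ survives. This covers your $\Lambda=1$ and your later large $\Lambda$. You propose to control it through $\int_0^T\!\int_{\{\theta\le\theta_1\}}(\theta-1)^2\dd x\dd t$, but no $T$-independent bound for this is available or expected:
\begin{itemize}
\item Lemmas~\ref{l:cells} and~\ref{l:osc} control cell averages (which lie in $[m_0,M_0]$, not near $1$) and oscillations, not the distance of $\theta$ from $1$.
\item \eqref{eq:ent-id} only gives $C E_0 T$, and there is no Poincar\'e inequality on $\Omega$.
\item Diffusive decay on the line makes $\norm{\theta(t)-1}_{L^2}^2$ non-integrable in time in general; note that \eqref{eq:mainds} contains no such term.
\end{itemize}
The paper adds exactly twice \eqref{eq:mech-v}, which is your identity with $\int_\Omega u_x/v$ absorbed into $\Phi(v)$. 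Then $2(\theta-1)-g_\beta(\theta)\theta\equiv0$ on $\{\theta\le K\}$, so the pressure work vanishes identically there. The heating term also merges with the mechanical dissipation into the weight $\zeta_\beta=2-g_\beta$. This weight is $\ge K^{-1}$ because $g_\beta$ is built to stay below $2-K^{-1}$; see \eqref{eq:capdef}. Once $\Lambda=2$ is forced, a large multiplier is no longer available for the hot region. What is used instead is that the residual multiplier vanishes at the threshold: $|2(\theta-1)-g_\beta(\theta)\theta|\le C_\beta(\tau-1)_+$ with $\tau=\theta/K$.

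Second, the case $\beta=0$ of \eqref{eq:cap} is left unproved. Saturating $g_\beta$ removes all conduction control on the hot set. Deferring to Proposition~\ref{p:hot0} is circular, because that proof uses \eqref{eq:cap} and \eqref{eq:tr-max}, and \eqref{eq:tr-max} itself rests on \eqref{eq:cap}. The missing device is to add to $g_\beta$ on $\{\theta>K\}$ a piece whose derivative is proportional to $K^{-2}\tau^{-3/2}$. This piece is integrable for every $\beta\ge0$, keeps $g_\beta$ bounded, and makes the conduction term dominate $\tau^{\beta-\frac32}\tau_x^2/v$. After Young's inequality with $\zeta_\beta\ge K^{-1}$, the hot pressure work costs $C_\beta K\int_\Omega(\tau-1)_+^2\dd x$. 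Using $\int_{\{\theta>K\}}\tau\dd x\le C/K$ from \eqref{eq:hot} twice,
\[
 K\int_\Omega(\tau-1)_+^2\dd x
 \leq C\norm{(\sqrt\tau-1)_+}_{L^\infty}^2
 \leq C\int_{\{\theta>K\}}\tau\dd x\int_{\{\theta>K\}}\tau^{-\frac32}\tau_x^2\dd x
 \leq\frac CK\int_{\{\theta>K\}}\tau^{\beta-\frac32}\frac{\tau_x^2}{v}\dd x,
\]
so taking $K$ large closes the absorption for all $\beta\ge0$. Your piece $g_\beta'\simeq\theta^{-1-\beta}$ corresponds to the $\beta\tau^{-\beta}$ term in \eqref{eq:capdef}; that term is what yields \eqref{eq:cappos}.

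Finally, a bound $G_\beta\le C(1+\Phi(\theta))$ is useless on an unbounded domain. You need $0\le G_\beta\le C\Phi(\theta)$ for $\theta\ge c$.
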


\begin{proof}
Decreasing $c$ if necessary, we may
assume that $c\leq1$ in \eqref{eq:state}. Fix $K\geq4$, to be chosen below, and define
on $[c,\infty)$
\begin{equation}
 g_\beta(\xi)=
 \begin{cases}
 \displaystyle 2(1-\xi^{-1}),&c\leq\xi\leq K,\\[2mm]
 \displaystyle
 2-\frac1K-
 \frac{\beta\left(\frac{\xi}{K}\right)^{-\beta}
 +\left(\frac{\xi}{K}\right)^{-\frac12}}
 {K(1+\beta)},&\xi>K.
 \end{cases}                                         \label{eq:capdef}
\end{equation}
Extend $g_\beta$ to $(0,c)$ as a bounded, nondecreasing Lipschitz
function. Then $g_\beta$ is bounded, continuous, and nondecreasing,
with $g_\beta(1)=0$. Set
\[
 F_\beta(\xi)=\int_1^\xi g_\beta(s)\dd s,
 \qquad \zeta_\beta=2-g_\beta,
\] so that, for every $\xi\geq c$,
\begin{equation} \label{eq:lj03} 0\leq F_\beta(\xi)\leq C\Phi(\xi),\qquad
 \zeta_\beta(\xi)\geq K^{-1}.
 \end{equation}

Next, multiplying \eqref{eq:mom}--\eqref{eq:ind} by $u$, $w$,
and $b$, respectively, and adding the resulting identities to
\eqref{eq:mass} multiplied by $1-v^{-1}$, we obtain
\begin{equation}
 \frac{\dd}{\dd t}\int_\Omega\left\{
 \frac{u^2+|w|^2+v|b|^2}{2}+\Phi(v)\right\}\dd x
 +\int_\Omega\frac{\mu u_x^2+|w_x|^2+|b_x|^2}{v}\dd x
 =\int_\Omega\frac{(\theta-1)u_x}{v}\dd x.           \label{eq:mech-v}
\end{equation}
Setting $\tau\triangleq\frac{\theta}{K},$ 
multiplying \eqref{eq:temp} by $g_\beta(\theta)$, adding
twice \eqref{eq:mech-v}, and integrating by parts, we obtain
\begin{align}
 &\frac{\dd}{\dd t}\int_\Omega
 \{u^2+|w|^2+v|b|^2+2\Phi(v)+F_\beta(\theta)\}\dd x\notag\\
 &\quad+\int_\Omega\frac{
 \zeta_\beta(\theta)(\mu u_x^2+|w_x|^2+|b_x|^2)
 +g_\beta'(\theta)\theta^\beta\theta_x^2}{v}\dd x\notag\\
 &=\int_\Omega
 \frac{\{2(\theta-1)-g_\beta(\theta)\theta\}u_x}{v}\dd x\notag\\&\leq\frac12\int_\Omega
 \frac{\zeta_\beta(\theta)\mu u_x^2}{v}\dd x
 +C_\beta K\int_\Omega(\tau-1)_+^2 \dd x,
                                                               \label{eq:cap-id}
\end{align}
where the last inequality follows from \eqref{eq:lj03} and
$\left|2(\theta-1)-g_\beta(\theta)\theta\right|
\leq C_\beta(\tau-1)_+$ for $\theta\geq c$.
The constants in the following estimates have the dependence specified
in Section~\ref{sec:lower} and are independent of $K$ and time.

For the second term on the right-hand side of \eqref{eq:cap-id}, we have
\begin{align}
 C_\beta K\int_\Omega(\tau-1)_+^2 \dd x
 &\leq C_\beta K
 \norm{(\sqrt\tau-1)_+}_{L^\infty(\Omega)}^2
 \int_{\{\theta>K\}} \tau  \dd x\notag\\&\leq C_\beta
 \norm{(\sqrt\tau-1)_+}_{L^\infty(\Omega)}^2
 \notag\\ &\leq C_\beta\left(\int_{\{\theta>K\}} \tau^{\frac14}
  |(\tau^{\frac14})_x|  \dd x\right)^2\notag\\ &\leq C_\beta \int_{\{\theta>K\}} \tau^{\frac12}\dd x \int_{\{\theta>K\}}
 |(\tau^{\frac14})_x|^2 \dd x\notag\\ &\leq C_\beta \int_{\{\theta>4\}} \tau\dd x \int_{\{\theta>K\}}
 \tau^{-\frac32} \tau_x^2 \dd x \notag\\
 &\leq\frac{C_\beta}{K}
 \int_{\{\theta>K\}}
 \tau^{\beta-\frac32}\frac{\tau_x^2}{v}\dd x\notag\\
 &\leq \frac{C_\beta}{K}\int_\Omega
 \frac{g_\beta'(\theta)\theta^\beta\theta_x^2}{v}\dd x,
                                                               \label{eq:hotcap}
\end{align}
where, in the last inequality, we used the following identity on
\(\{\theta>K\}\):
\begin{equation*}
 \frac{g_\beta'(\theta)\theta^\beta\theta_x^2}{v}
 =\frac{K^\beta}{1+\beta}
 \left\{\beta^2\tau^{-1}
 +\frac12\tau^{\beta-\frac32}\right\}
 \frac{\tau_x^2}{v}.
\end{equation*}

Choosing $K=\max\{4C_\beta,4\}$, substituting
\eqref{eq:hotcap} into \eqref{eq:cap-id},
absorbing the resulting term, and integrating in time, we obtain
\begin{align*}
 &\sup_{0\leq t\leq T}\int_\Omega
 \{u^2+|w|^2+v|b|^2+2\Phi(v)+F_\beta(\theta)\}\dd x\\
 &\quad+\int_0^T\!\int_\Omega
 \frac{\zeta_\beta(\theta)(\mu u_x^2+|w_x|^2+|b_x|^2)
 +g_\beta'(\theta)\theta^\beta\theta_x^2}{v}\dd x\dd t
 \leq C.
\end{align*}
Since $\zeta_\beta(\theta)\geq K^{-1}$ and $v\leq C$,
\eqref{eq:cap} follows.

For $\beta>0$ and $c\leq\theta\leq K$, we have
\[
 g_\beta'(\theta)\theta^\beta
 =2\theta^{\beta-2}
 \geq\frac{c_{\beta,K}}{\theta}.
\]
For $\theta>K$, we use $\tau=\frac{\theta}{K}$ to obtain
\[
 g_\beta'(\theta)\theta^\beta
 =\frac{K^{\beta-2}}{1+\beta}
 \left\{\beta^2\tau^{-1}
 +\frac12\tau^{\beta-\frac32}\right\}
 \geq\frac{\beta^2K^{\beta-1}}{1+\beta}\frac1\theta.
\]
Therefore,
\[
 g_\beta'(\theta)\theta^\beta
 \geq\frac{c_{\beta,K}}{\theta},
 \qquad \theta\geq c.
\]
Together with the uniform upper and lower bounds for $v$,
this proves \eqref{eq:cappos}.
\end{proof}

\begin{lemma}\label{l:flux}
We have
\begin{align}
 &\sup_{0\leq t\leq T}
 (\|w\|_{H^1}^2
 + \|b\|_{H^1}^2 )  +\int_0^T\!\int_\Omega\biggl\{
 \left|w_t \right|^2
 + \left|b_t \right|^2
 +\left|\left(\frac{w_x}{v}\right)_x\right|^2
 +\left|\left(\frac{b_x}{v}\right)_x\right|^2
 \biggr\}\dd x\dd t\leq C.                         \label{eq:trflux}
\end{align}
Moreover,
\begin{align}
 &\int_0^T\bigl\{
 \norm{w_x(t)}_{L^\infty}^2+\norm{b_x(t)}_{L^\infty}^2
 +\norm{w_x(t)}_{L^\infty}^4+\norm{b_x(t)}_{L^\infty}^4
  \bigr\}\dd t\leq C.
                                                               \label{eq:tr-max}
\end{align}
\end{lemma}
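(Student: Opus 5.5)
We prove \eqref{eq:trflux} by a standard $L^2$ energy estimate for $(w_x,b_x)$ obtained by testing \eqref{eq:trans}--\eqref{eq:ind} with the second-order quantities $-(w_x/v)_x$ and $-(b_x/v)_x$. We then deduce \eqref{eq:tr-max} from \eqref{eq:GN}. Throughout we use \eqref{eq:state}, the bounds \eqref{eq:cap} (which give $\int_0^T\|(u_x,w_x,b_x)\|_{L^2}^2\dd t\leq C$ and $\sup_t\|(u,w,b)\|_{L^2}\leq C$), and \eqref{eq:ent-id}. We do not use any upper bound for $\theta$ or any bound on $v_x$.

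For $w$, we multiply \eqref{eq:trans} by $-(w_x/v)_x$ and integrate. The boundary terms vanish because $w_t(0,t)=0$, and the far field is handled by cutoffs. This gives
\[
\frac12\frac{\dd}{\dd t}\int_\Omega\frac{|w_x|^2}{v}\dd x+\int_\Omega\Bigl|\Bigl(\frac{w_x}{v}\Bigr)_x\Bigr|^2\dd x
=-\frac12\int_\Omega\frac{u_x|w_x|^2}{v^2}\dd x-\int_\Omega b_x\cdot\Bigl(\frac{w_x}{v}\Bigr)_x\dd x .
\]
For $b$, we first write \eqref{eq:ind} as $vb_t+u_xb-w_x=(b_x/v)_x$ and multiply by $-(b_x/v)_x/v$. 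In the Neumann case the boundary term drops because $b_x(0,t)=0$; in the Dirichlet case it drops because $b_t(0,t)=0$. We obtain an analogous identity for $\int|b_x|^2/(2v)$, with right-hand side
\[
-\int_\Omega\frac{u_x|b_x|^2}{2v^2}\dd x+\int_\Omega\frac{u_xb-w_x}{v}\cdot\Bigl(\frac{b_x}{v}\Bigr)_x\dd x .
\]
The linear coupling terms $b_x\cdot(w_x/v)_x$ and $w_x\cdot(b_x/v)_x$ are absorbed by Young's inequality, leaving $C\|(w_x,b_x)\|_{L^2}^2$. This remainder is time-integrable by \eqref{eq:cap}.

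The main obstacle is the set of cubic terms $\int|u_x||w_x|^2$ and $\int|u_x||b_x|^2$, together with $\int|u_x|^2|b|^2$. Here $u_x$ is only known to lie in $L^2_{t,x}$. We first note the pointwise estimate $|(w_x)_x|\leq C|(w_x/v)_x|+C|v_x||w_x|$. This contains $v_x$, which we do not control, so we must avoid it. Instead, we use \eqref{eq:GN} on $f=w_x/v$:
\[
\|w_x\|_{L^\infty}^2\leq C\|w_x\|_{L^2}\Bigl\|\Bigl(\frac{w_x}{v}\Bigr)_x\Bigr\|_{L^2},
\]
where we use $\|(w_x/v)\|_{L^2}\approx\|w_x\|_{L^2}$. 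Hence
\[
\int|u_x||w_x|^2\leq\|u_x\|_{L^2}\|w_x\|_{L^2}\|w_x\|_{L^\infty}\leq \varepsilon\Bigl\|\Bigl(\frac{w_x}{v}\Bigr)_x\Bigr\|_{L^2}^2+C\|u_x\|_{L^2}^{4/3}\|w_x\|_{L^2}^{2}.
\]
The same bound holds for $b$. Since $\int_0^T\|u_x\|_{L^2}^{4/3}\dd t$ is not bounded, we instead write the coefficient as $C(1+\|u_x\|_{L^2}^2)\|w_x\|_{L^2}^2$. Gronwall's inequality with the integrable factor $\|u_x\|_{L^2}^2$ then closes the estimate. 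For $\int|u_x|^2|b|^2$, we bound $\|b\|_{L^\infty}^2\leq C\|b\|_{L^2}\|b_x\|_{L^2}\leq C\|b_x\|_{L^2}$, which gives the term $C\|u_x\|_{L^2}^2\|b_x\|_{L^2}$; this again fits the Gronwall structure.

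Integrating in time and using \eqref{eq:cap} yields the $\sup_t\|(w,b)\|_{H^1}$ bound and the space-time bounds on $(w_x/v)_x$ and $(b_x/v)_x$. The bounds on $w_t$ and $b_t$ then follow directly from the equations, because $b_x\in L^2_{t,x}$ and $u_xb\in L^2_{t,x}$, the latter via $\|b\|_{L^\infty}\leq C$.

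For \eqref{eq:tr-max}, we apply \eqref{eq:GN} to $w_x/v$ and $b_x/v$, which gives $\|w_x\|_{L^\infty}^2\leq C\|w_x\|_{L^2}\|(w_x/v)_x\|_{L^2}$. Time integration then bounds the $L^2_t$ norm by $\int(\|w_x\|^2+\|(w_x/v)_x\|^2)$. For the fourth power, we use $\|w_x\|_{L^\infty}^4\leq C\|w_x\|_{L^2}^2\|(w_x/v)_x\|_{L^2}^2$ together with $\sup_t\|w_x\|_{L^2}\leq C$. The same argument applies to $b$.
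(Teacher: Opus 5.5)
Your argument is correct, but it takes a different route from the paper's. The paper sets $r=w_x/v$ and $s=b_x/v$, and rewrites \eqref{eq:trans}--\eqref{eq:ind} as $r_x=w_t-vs$ and $s_x=v(b_t+\frac{u_x}{v}b-r)$. It then tests these with the shifted time derivatives $w_t-ur$ and $b_t-us$, rather than with $-(w_x/v)_x$ and $-(b_x/v)_x/v$.

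With that choice one has the exact identity $\int_\Omega r_x\cdot(w_t-ur)\dd x=-\frac12\frac{\dd}{\dd t}\int_\Omega v|r|^2\dd x$, and likewise for $s$. So the cubic transport term $u_x|w_x|^2/v^2$, which you have to fight, never appears. The paper's dissipation is $\|w_t-ur\|_{L^2}^2+\|\sqrt v\,(b_t-us)\|_{L^2}^2$. The leftover terms $|u|^2(|r|^2+|s|^2)$ and $|u_x|^2|b|^2$ are handled by $\|u\|_{L^\infty}^2\le C\|u_x\|_{L^2}$ and $\|b\|_{L^\infty}^2\le C\|b_x\|_{L^2}$. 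Finally, $(w_x/v)_x$ and $(b_x/v)_x$ are read off from the rewritten equations.

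You instead take the elliptic quantities $(w_x/v)_x$ and $(b_x/v)_x$ as the dissipation, which produces the cubic terms. You neutralize them by three steps:
\begin{itemize}
\item applying \eqref{eq:GN} to $w_x/v$, which correctly avoids any use of $v_x$;
\item Young's inequality with exponents $4$ and $\frac43$;
\item the bound $\|u_x\|_{L^2}^{4/3}\le1+\|u_x\|_{L^2}^2$, which turns the coefficient into a time-integrable Gronwall factor.
\end{itemize}
You then recover $w_t$ and $b_t$ from the equations.

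Both routes end at the same inequality $Y'\le C(\|u_x\|_{L^2}^2+\|w_x\|_{L^2}^2+\|b_x\|_{L^2}^2)+C\|u_x\|_{L^2}^2Y$. Its coefficients are integrable on $(0,T)$ uniformly in $T$ by \eqref{eq:cap}. The $L^\infty$ bounds in \eqref{eq:tr-max} then follow in the same way in both.

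The paper's test function buys an exact structural cancellation and needs no interpolation for the transport term. Yours is the more standard energy method. It rests on precisely the identity $\frac{\dd}{\dd t}\int_\Omega\frac{|f_x|^2}{v}\dd x=-2\int_\Omega\bigl(\frac{f_x}{v}\bigr)_xf_t\dd x-\int_\Omega\frac{u_x|f_x|^2}{v^2}\dd x$, which the paper justifies in Appendix~\ref{sec:local}. That appendix argument also covers your boundary terms at $x=0$.
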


\begin{proof}
Set
\[
 r=\frac{w_x}{v},
 \qquad s=\frac{b_x}{v}.
\]
We rewrite \eqref{eq:trans} and \eqref{eq:ind} as
\begin{align}
 r_x&=(w_t-ur)+ur-vs,\notag\\
 s_x&=v\left\{(b_t-us)+us+\frac{u_x}{v}b-r\right\}.
                                                               \label{eq:rs-id}
\end{align}

Take the inner product of the first identity in
\eqref{eq:rs-id} with $w_t-ur$ and integrate over $\Omega$.
Since $w_x=vr$ and $v_t=u_x$, integration by parts gives
\begin{align*}
 \int_\Omega r_x\cdot(w_t-ur)\dd x
 &=-\int_\Omega r\cdot(w_x)_t\dd x
 +\frac12\int_\Omega u_x|r|^2\dd x\\
 &=-\int_\Omega v r\cdot r_t\dd x
 -\frac12\int_\Omega u_x|r|^2\dd x\\
 &=-\frac12\frac{\dd}{\dd t}\int_\Omega v|r|^2\dd x.
\end{align*}
Thus,
\begin{equation}
 \frac12\frac{\dd}{\dd t}\int_\Omega v|r|^2\dd x
 +\int_\Omega|w_t-ur|^2\dd x
 =\int_\Omega(vs-ur)\cdot(w_t-ur)\dd x.             \label{eq:rE}
\end{equation}
Similarly, taking the inner product of the second identity in
\eqref{eq:rs-id} with $b_t-us$ and using $b_x=vs$ and
$v_t=u_x$, we obtain
\begin{equation}
 \frac12\frac{\dd}{\dd t}\int_\Omega v|s|^2\dd x
 +\int_\Omega v|b_t-us|^2\dd x
 =-\int_\Omega v\left(us+\frac{u_x}{v}b-r\right)
 \cdot(b_t-us)\dd x.                                  \label{eq:sE}
\end{equation}
In the half-line cases, the boundary terms above vanish since $u=w=0$
and either $b_x=0$ or $b=0$ at $x=0$.
Define
\begin{equation}
 Y(t)=\frac12\int_\Omega v
 (|r|^2+|s|^2)\dd x.                                  \label{eq:tr-Y}
\end{equation}
Adding \eqref{eq:rE} and \eqref{eq:sE}, we use the uniform
upper and lower bounds for $v$ and Young's inequality to absorb the
corresponding dissipation terms and obtain
\begin{align}
 Y'(t)&+c\int_\Omega\{|w_t-ur|^2+v|b_t-us|^2\}\dd x\notag\\
 &\leq C
\int_\Omega\left(
 |u|^2(|r|^2+|s|^2)+|u_x|^2|b|^2+|r|^2+|s|^2 \right)
 \dd x\notag\\
 &\leq C Y(t)+C\norm{u(t)}_{L^\infty}^2Y(t)
 +C\norm{b(t)}_{L^\infty}^2
 \int_\Omega\frac{u_x^2}{v}\dd x \notag\\
 &\leq C\bigl(Y(t)+\|u_x(t)\|_{L^2}^2\bigr)+CY(t)\|u_x(t)\|_{L^2}^2.
                                                               \label{eq:flux}
\end{align}

By \eqref{eq:cap} and the uniform bounds for $v$,
\begin{equation}
 \int_0^T\left( Y(t)+\|u_x\|_{L^2}^2\right)\dd t\leq C.                \label{eq:fluxL1}
\end{equation}
Applying Gronwall's inequality to \eqref{eq:flux}, we obtain
\begin{equation}\sup_{0\leq t\leq T}Y(t)+
 \int_0^T\!\int_\Omega
 \{|w_t-ur|^2+v|b_t-us|^2\}\dd x\dd t\leq C.
                                                               \label{eq:trt}
\end{equation}
Combining this with \eqref{eq:flux} and \eqref{eq:fluxL1}
also gives
\begin{align*}
 \int_0^T\!
\int_\Omega\left(
 |u|^2(|r|^2+|s|^2)+|u_x|^2|b|^2+|r|^2+|s|^2 \right)
 \dd x\dd t  \leq C .
\end{align*}
Substituting this estimate and \eqref{eq:trt}
into \eqref{eq:rs-id}, we obtain
\[
 \int_0^T\!\int_\Omega(|w_t|^2+|b_t|^2+|r_x|^2+|s_x|^2)\dd x\dd t\leq C.
\]
Together with \eqref{eq:tr-Y} and \eqref{eq:cap},
this proves \eqref{eq:trflux}.

Finally, the one-dimensional inequality
$\norm{f}_{L^\infty}^2\leq2\norm{f}_{L^2}\norm{f_x}_{L^2}$,
\eqref{eq:fluxL1}, \eqref{eq:trt}, and the
bounds for $r_x,s_x$ above give
\begin{align*}
 \int_0^T\{\norm{r(t)}_{L^\infty}^2+\norm{s(t)}_{L^\infty}^2
 +\norm{r(t)}_{L^\infty}^4+\norm{s(t)}_{L^\infty}^4\}\dd t\leq C.
\end{align*}
Since $w_x=vr$, $b_x=vs$, and $v\leq C$, we obtain the four terms
involving $w_x$ and $b_x$ in \eqref{eq:tr-max}.

Moreover, by \eqref{eq:GN}, \eqref{eq:cap}, and
\eqref{eq:trflux}, we obtain
\begin{equation}
\begin{aligned}
 \int_0^T\norm{b(t)}_{L^\infty}^4\dd t
 &\leq C\sup_{0\leq t\leq T}\norm{b(t)}_{L^2}^2
 \int_0^T\norm{b_x(t)}_{L^2}^2\dd t\leq C,\\
 \int_0^T\!\int_\Omega |b|^2|b_x|^2\dd x\dd t
 &\leq \sup_{0\leq t\leq T}\norm{b(t)}_{L^\infty}^2
 \int_0^T\norm{b_x(t)}_{L^2}^2\dd t\leq C.
\end{aligned}                                             \label{eq:b-inf}\qedhere
\end{equation}
\end{proof}

For $\beta=0$, we need the following additional estimate.

\begin{proposition}\label{p:hot0}
If $\beta=0$, then
\begin{align}
 &\sup_{0\leq t\leq T}\int_\Omega(\theta-2)_+^2\dd x
 +\int_0^T\!\int_\Omega\theta_x^2\dd x\dd t+\int_0^T\norm{(\theta(t)-2)_+}_{L^\infty}^2\dd t\leq C.
                                                               \label{eq:hot-0}
\end{align}
\end{proposition}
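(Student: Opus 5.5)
We multiply the temperature equation \eqref{eq:temp} (with $\beta=0$) by $(\theta-2)_+$ and integrate over $\Omega$, following the truncation argument of Li--Liang \cite{LiLiang2016}. Since $\theta_x(0,t)=0$ on the half-line and the far-field limit is $1<2$, the boundary terms vanish, and we get
\[
 \frac12\frac{\dd}{\dd t}\int_\Omega(\theta-2)_+^2\dd x
 +\int_{\{\theta>2\}}\frac{\theta_x^2}{v}\dd x
 =-\int_\Omega\frac{\theta u_x}{v}(\theta-2)_+\dd x
 +\int_\Omega\frac{\mu u_x^2+|w_x|^2+|b_x|^2}{v}(\theta-2)_+\dd x .
\]
The heating terms involving $w_x,b_x$ are bounded by $C(\|w_x\|_{L^\infty}^2+\|b_x\|_{L^\infty}^2)\int_\Omega(\theta-2)_+\dd x$. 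By \eqref{eq:hot} with $K=2$, the last integral is uniformly bounded, and the time integrals of the sup norms are bounded by \eqref{eq:tr-max}. The pressure term and the $\mu u_x^2$ heating term are handled differently. Rather than estimating $u_x$ in $L^\infty$, we use the momentum equation: $\frac{\mu u_x}{v}=\sigma+\frac\theta v+\frac12|b|^2$ in case $\eqref{eq:viscosity}_1$, with the analogue \eqref{eq:sigma-T} in case $\eqref{eq:viscosity}_2$.

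For these two terms, we combine the previous estimate with the energy identity for $u$. We multiply \eqref{eq:mom} by $u(\theta-2)_+$, or use $\frac{\mu u_x^2}{v}(\theta-2)_+=\bigl(\sigma+\frac\theta v+\frac12|b|^2\bigr)u_x(\theta-2)_+$ and integrate by parts, so that the quadratic heating is traded for $-\int\sigma u\,\theta_x 1_{\{\theta>2\}}-\int\sigma_x u(\theta-2)_+$ plus lower-order terms. Alternatively, and more simply, we try the standard device of adding a multiple of $\frac{\dd}{\dd t}\int\frac12u^2(\theta-2)_+$. In either version, the remaining terms have the schematic form
\[
 \int_\Omega |u|\,|\sigma|\,|\theta_x|1_{\{\theta>2\}}\dd x,\qquad
 \int_\Omega \frac{\theta}{v}|u_x|(\theta-2)_+\dd x,\qquad
 \int_\Omega |b|^2|u_x|(\theta-2)_+\dd x .
\]
They are controlled by Young's inequality. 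We absorb $\frac14\int_{\{\theta>2\}}\theta_x^2/v$ into the left side. We bound $\|(\theta-2)_+\|_{L^\infty}^2\le 2\|(\theta-2)_+\|_{L^2}\|(\theta-2)_{+,x}\|_{L^2}$ by \eqref{eq:GN}. We use $\int_0^T\|u_x\|_{L^2}^2\le C$ from \eqref{eq:cap}. For the magnetic pressure term, we use $\int_0^T(\|b\|_{L^\infty}^4+\int|b|^2|b_x|^2)\dd t\le C$ from \eqref{eq:b-inf}.

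The outcome should be a Gronwall inequality of the form
\[
 X'(t)+c\int_{\{\theta>2\}}\theta_x^2\dd x\le C a(t)\bigl(1+X(t)\bigr),\qquad X=\int_\Omega(\theta-2)_+^2\dd x ,
\]
with $\int_0^\infty a\dd t\le C$, where $a$ collects $\|u_x\|_{L^2}^2$, $\|w_x\|_{L^\infty}^2$, $\|b_x\|_{L^\infty}^2$ and $\|b\|_{L^\infty}^4$. This gives the bound for the first term of \eqref{eq:hot-0} and for $\int_0^T\!\int_{\{\theta>2\}}\theta_x^2$. The $L^\infty$ term then follows from \eqref{eq:GN}: $\|(\theta-2)_+\|_{L^\infty}^2\le X+\|((\theta-2)_+)_x\|_{L^2}^2$ pointwise in time, and we integrate. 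We still need $\int_0^T\!\int_\Omega(\theta-2)_+^2$. This follows from $(\theta-2)_+^2\le\|(\theta-2)_+\|_{L^\infty}(\theta-2)_+$ together with \eqref{eq:hot}, after absorbing. On $\{\theta\le 2\}$, $\theta_x^2\le 4\theta^{-2}\theta_x^2\le C v\,\mathcal D_0$, using $\theta\ge c$ and $v\le C$, so that region is controlled by \eqref{eq:ent-id}.

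We expect the main obstacle to be the pressure and viscous-heating term $\int\frac{\theta u_x}{v}(\theta-2)_+$ (and $\mu u_x^2(\theta-2)_+$), which is cubic in the high-temperature region. A direct Young bound needs $\int_0^T\!\int\theta^2(\theta-2)_+^2$, which is not yet available. We will first try the bound $\int\theta^2(\theta-2)_+\le C\|(\theta-2)_+\|_{L^\infty}^2\bigl(1+\int(\theta-2)_+\bigr)$ and put it against $\int_0^T\|u_x\|_{L^2}^2$ through the product $\|u_x\|_{L^2}\|(\theta-2)_+\|_{L^\infty}^{3/2}\cdots$. If the exponents do not close, we will instead follow Li--Liang and multiply by $(\theta-2)_+$ combined with $\frac{u^2}{2}$, using the total energy equation \eqref{eq:energy}. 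This cancels the $\mu u_x^2$ and pressure-work terms exactly, leaving only fluxes such as $u\sigma$ and $w\cdot w_x$, $b\cdot b_x$ paired with $\theta_x 1_{\{\theta>2\}}$, which are quadratic and close by the estimates above.
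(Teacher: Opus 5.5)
Using $\int_\Omega u^2(\theta-2)_+\dd x$ to cancel the viscous heating is the right device, and it is the one the paper uses. As written, however, your argument does not close uniformly in time, for two concrete reasons.

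First, $\frac{\dd}{\dd t}\int_\Omega u^2(\theta-2)_+\dd x$ necessarily produces $\int_{\{\theta>2\}}u^2\theta_t\dd x$, which your list of remaining terms omits. Substituting \eqref{eq:temp} turns it into $\int_{\{\theta>2\}}\frac{\mu u^2u_x^2}{v}\dd x$, $\int_{\{\theta>2\}}\frac{uu_x\theta_x}{v}\dd x$ and $\int_{\{\theta>2\}}\frac{u^2(|w_x|^2+|b_x|^2)}{v}\dd x$. Your term $\int|u||\sigma||\theta_x|\mathbf 1_{\{\theta>2\}}\dd x$ already contains $\int\mu|uu_x\theta_x|\dd x$. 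After Young's inequality you are left with $\int_0^T\!\int_\Omega u^2u_x^2\dd x\dd t$, which \eqref{eq:cap} and \eqref{eq:b-inf} do not control. At this stage only $\norm{u}_{L^\infty}^2\leq2\norm{u}_{L^2}\norm{u_x}_{L^2}$ is available, which leads to $\int_0^T\norm{u_x}_{L^2}^3\dd t$. The paper adds a large multiple of the $L^4$ identity \eqref{eq:u3-0} (the momentum equation multiplied by $u^3$). Its dissipation $3\int_\Omega\frac{\mu u^2u_x^2}{v}\dd x$ absorbs these quartic terms, and your plan has no substitute for it. The fallback through \eqref{eq:energy} does not avoid the issue either. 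Pairing it with $(\theta-2)_+$ again produces $\theta_t$ times $\frac{u^2+|w|^2+v|b|^2}{2}$ on $\{\theta>2\}$, and the flux term $\frac{\mu uu_x}{v}\theta_x$ is cubic, not quadratic.

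Second, after the cancellation the pressure terms reduce to $\int_0^T\!\int_\Omega\theta^2(\theta-2)_+\dd x\dd t$ and similar quantities, and you have no time-integrable bound for them. Your inequality $\int_\Omega\theta^2(\theta-2)_+\dd x\leq C\norm{(\theta-2)_+}_{L^\infty}^2\bigl(1+\int_\Omega(\theta-2)_+\dd x\bigr)$ is false near $\theta=2$. Take $\theta=2+\varepsilon$ on a unit interval: the left side is about $4\varepsilon$, the right side about $C\varepsilon^2$. The correct version carries an additive constant, which integrates to $CT$. Similarly, the Gagliardo--Nirenberg bound $\norm{(\theta-2)_+}_{L^\infty}^2\leq2X^{\frac12}\norm{((\theta-2)_+)_x}_{L^2}$ leaves $C_\varepsilon X$ with a constant coefficient, so Gronwall gives $\mathrm e^{CT}$. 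Your bound for $\int_0^TX\dd t$ through $\int_0^T\norm{(\theta-2)_+}_{L^\infty}\dd t$ is again only $CT$.

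The missing ingredient is the entropy dissipation combined with a shifted threshold. Since $\theta\leq4(\theta-\frac32)_+$ on $\{\theta>2\}$, \eqref{eq:hot} gives $\int_\Omega\theta^2(\theta-2)_+\dd x\leq C\norm{(\theta-\frac32)_+}_{L^\infty}^2$. Moreover, using $\theta^{-1}\leq\delta+C_\delta\theta^{-2}$,
\[
 \norm{(\theta-\tfrac32)_+}_{L^\infty}^2
 \leq\int_{\{\theta>\frac32\}}v\theta\dd x
 \int_{\{\theta>\frac32\}}\frac{\theta_x^2}{v\theta}\dd x
 \leq C\delta\int_\Omega\theta_x^2\dd x
 +C_\delta\int_\Omega\frac{\theta_x^2}{v\theta^2}\dd x .
\]
The first term is absorbed for small $\delta$, and the second is integrable in time by \eqref{eq:ent-id}. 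With these two additions your argument becomes the paper's. The last term in \eqref{eq:hot-0} then follows directly from $\norm{(\theta-2)_+}_{L^\infty}^2\leq|\{\theta>2\}|\int_{\{\theta>2\}}\theta_x^2\dd x$, rather than from the interpolation inequality.
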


\begin{proof}
Multiplying \eqref{eq:temp} by $(\theta-2)_+$
(cf.\ \cite{LiLiang2016}) and integrating
over $\Omega$, we obtain
\begin{align}
 &\frac12\frac{\dd}{\dd t}\int_\Omega(\theta-2)_+^2\dd x
 +\int_{\{\theta>2\}}\frac{\theta_x^2}{v}\dd x\notag\\
 &\quad=-\int_\Omega\frac{\theta u_x}{v}(\theta-2)_+\dd x
 +\int_\Omega\frac{\mu u_x^2}{v}(\theta-2)_+\dd x
 +\int_\Omega\frac{(\theta-2)_+(|w_x|^2+|b_x|^2)}{v}\dd x.
                                                        \label{eq:heat-0}
\end{align}
Next, multiply \eqref{eq:mom} by $2u(\theta-2)_+$ and integrate
over $\Omega$. Since
\begin{align*}
 \partial_t(\theta-2)_+
 &=\mathbf1_{\{\theta>2\}}\theta_t,\\
 2u(\theta-2)_+u_t
 &=\bigl[u^2(\theta-2)_+\bigr]_t
 -\mathbf1_{\{\theta>2\}}u^2\theta_t
\end{align*}
hold almost everywhere, integration by parts yields
\begin{align*}
 &\frac{\dd}{\dd t}\int_\Omega u^2(\theta-2)_+\dd x
 +2\int_\Omega\frac{\mu(\theta-2)_+u_x^2}{v}\dd x\\
 &=\int_{\{\theta>2\}}u^2\theta_t\dd x
 -2\int_{\{\theta>2\}}\frac{\mu uu_x\theta_x}{v}\dd x\\
 &\quad+2\int_\Omega\frac{\theta u_x}{v}(\theta-2)_+\dd x
 +2\int_{\{\theta>2\}}\frac{\theta u\theta_x}{v}\dd x\\
 &\quad+2\int_\Omega\frac{|b|^2}{2}(\theta-2)_+u_x\dd x
 +2\int_{\{\theta>2\}}\frac{|b|^2}{2}u\theta_x\dd x\\
 &\triangleq \sum_{i=1}^6 I_i(t).
\end{align*}
We first estimate $I_1$. Since $\beta=0$, \eqref{eq:temp} reads
\[
 \theta_t=\left(\frac{\theta_x}{v}\right)_x
 -\frac{\theta u_x}{v}
 +\frac{\mu u_x^2+|w_x|^2+|b_x|^2}{v}.
\]
For $\rho>0$, define
\[
 \varphi_\rho(\xi)=
 \begin{cases}
  0,&\xi\leq2,\\[1mm]
  \frac{\xi-2}{\rho},&2<\xi<2+\rho,\\[2mm]
  1,&\xi\geq2+\rho.
 \end{cases}
\]
Then, for every $\varepsilon>0$,
\begin{align*}
 I_1(t)
 &=\lim_{\rho\to0^+}\int_\Omega\varphi_\rho(\theta)u^2
 \left(\frac{\theta_x}{v}\right)_x\dd x
 -\int_{\{\theta>2\}}\frac{u^2\theta u_x}{v}\dd x+\int_{\{\theta>2\}}
 \frac{u^2(\mu u_x^2+|w_x|^2+|b_x|^2)}{v}\dd x\\
 &=\lim_{\rho\to0^+}\Biggl\{-2\int_\Omega\varphi_\rho(\theta)
 \frac{uu_x\theta_x}{v}\dd x-\int_\Omega\varphi_\rho'(\theta)
 \frac{u^2\theta_x^2}{v}\dd x
\Biggr\}\\
&\quad-\int_{\{\theta>2\}}\frac{u^2\theta u_x}{v}\dd x
 +\int_{\{\theta>2\}}
 \frac{u^2(\mu u_x^2+|w_x|^2+|b_x|^2)}{v}\dd x\\
 &\leq-2\int_{\{\theta>2\}}\frac{uu_x\theta_x}{v}\dd x
 -\int_{\{\theta>2\}}\frac{u^2\theta u_x}{v}\dd x+\int_{\{\theta>2\}}
 \frac{u^2(\mu u_x^2+|w_x|^2+|b_x|^2)}{v}\dd x\\
 &\leq\varepsilon\int_\Omega\theta_x^2\dd x
 +C_\varepsilon\int_\Omega u^2u_x^2\dd x
 +C_\varepsilon
 \norm{\left(\theta(t)-\frac32\right)_+}_{L^\infty}^2+\int_{\{\theta>2\}}
 \frac{u^2(|w_x|^2+|b_x|^2)}{v}\dd x.
\end{align*}
For the first inequality above, we used $\varphi_\rho'\geq0$,
$\varphi_\rho(\theta)\to\mathbf1_{\{\theta>2\}}$, and the dominated
convergence theorem. For the last inequality, we used
\[
 \theta\leq4\left(\theta-\frac32\right)_+
 \qquad\text{on }\{\theta>2\},
\]
together with \eqref{eq:hot}, \eqref{eq:state},
\eqref{eq:ent-id}, and Young's inequality. The remaining terms satisfy
\begin{align*}
 |I_2(t)|
 &\leq\varepsilon\int_\Omega\theta_x^2\dd x
 +C_\varepsilon\int_\Omega u^2u_x^2\dd x,\\
 |I_3(t)|
 &\leq\frac12\int_\Omega\frac{\mu(\theta-2)_+u_x^2}{v}\dd x
 +C\norm{\left(\theta(t)-\frac32\right)_+}_{L^\infty}^2,\\
 |I_4(t)|
 &\leq\varepsilon\int_\Omega\theta_x^2\dd x
 +C_\varepsilon
 \norm{\left(\theta(t)-\frac32\right)_+}_{L^\infty}^2.
\end{align*}
The terms $I_5$ and $I_6$ will be estimated after time integration.

Finally, multiply \eqref{eq:mom} by $u^3$ to obtain
\begin{align}
 &\frac14\frac{\dd}{\dd t}\int_\Omega u^4\dd x
 +3\int_\Omega\frac{\mu u^2u_x^2}{v}\dd x=3\int_\Omega\frac{\theta}{v}u^2u_x\dd x
 +3\int_\Omega\frac{|b|^2}{2}u^2u_x\dd x.        \label{eq:u3-0}
\end{align}
Using $\int_\Omega u^2u_x\dd x=0$, we estimate the first term on
the right-hand side of \eqref{eq:u3-0} as follows:
\begin{align}
 \left|\int_\Omega\frac{\theta}{v}u^2u_x\dd x\right|
 &=\left|\int_\Omega\left(\frac{\theta}{v}-1\right)
 u^2u_x\dd x\right|\notag\\
 &\leq\left|\int_\Omega\left\{
 \frac{1-v}{v}+\mathbf1_{\{\theta\leq2\}}\frac{\theta-1}{v}
 \right\}u^2u_x\dd x\right|+\left|\int_{\{\theta>2\}}
 \frac{\theta-1}{v}u^2u_x\dd x\right|\notag\\
 &\leq\norm{\frac{1-v}{v}
 +\mathbf1_{\{\theta\leq2\}}\frac{\theta-1}{v}}_{L^2}
 \norm{u(t)}_{L^\infty}^2\norm{u_x(t)}_{L^2}+\left|\int_{\{\theta>2\}}
 \frac{\theta-1}{v}u^2u_x\dd x\right|\notag\\
 &\leq C\norm{u_x(t)}_{L^2}^2
 +\varepsilon\int_\Omega u^2u_x^2\dd x
 +C_\varepsilon
 \norm{\left(\theta(t)-\frac32\right)_+}_{L^\infty}^2.
                                                        \label{eq:u3-P-0}
\end{align}
Here the first inequality uses
$\frac{\theta}{v}-1=\frac{1-v}{v}+\frac{\theta-1}{v}$. The last one
follows from Hölder's and Young's inequalities,
\eqref{eq:state}, \eqref{eq:ent-id}, and
\eqref{eq:GN}, together with
\[
 \begin{gathered}
  (v-1)^2\leq C\Phi(v),\qquad
  (\theta-1)^2\mathbf1_{\{\theta\leq2\}}\leq C\Phi(\theta),\quad
 ( \theta-1)\mathbf1_{\{\theta>2\}}\leq4\left(\theta-\frac32\right)_+
  .
 \end{gathered}
\]

We add \eqref{eq:heat-0}, the identity defining
$I_1,\ldots,I_6$, and $\Lambda$ times \eqref{eq:u3-0}.
Using the estimates for $I_1,\ldots,I_4$,
\eqref{eq:u3-P-0}, \eqref{eq:cap}, and
\eqref{eq:ent-id}, and choosing $\varepsilon>0$ sufficiently
small and $\Lambda>0$ sufficiently large, we obtain
\begin{align*}
 &\int_\Omega\left\{\frac12(\theta-2)_+^2
 +u^2(\theta-2)_++\frac{\Lambda}{4}u^4\right\}\dd x
 +c\int_0^t\!\int_\Omega
 \{(\theta+u^2)u_x^2+\theta_x^2\}\dd x\dd s\\
 &\leq C+C\int_0^t
 \norm{\left(\theta(s)-\frac32\right)_+}_{L^\infty}^2\dd s+C\int_0^t
 \int_\Omega\frac{(\theta-2)_+(|w_x|^2+|b_x|^2)}{v}\dd x\dd s\\&\quad
 +C\int_0^t \int_{\{\theta>2\}}\frac{u^2(|w_x|^2+|b_x|^2)}{v}\dd x\dd s+C\int_0^t  \int_\Omega\frac{|b|^2}{2}
 (\theta-2)_+|u_x|\dd x  \dd s\\&
 \quad+C\int_0^t  \int_{\{\theta>2\}}\frac{|b|^2}{2}|u\theta_x|\dd x\dd s
 +C\int_0^t  \int_\Omega\frac{|b|^2}{2}u^2|u_x|\dd x   \dd s\\
 & \triangleq C+C\int_0^t
 \norm{\left(\theta(s)-\frac32\right)_+}_{L^\infty}^2\dd s
 +  \sum_{i=1}^5J_i(t) .
\end{align*}
Then, for every $\varepsilon>0$,
\begin{align*}
 J_1(t)+ J_2(t)
 & \leq C\int_0^t  \{\norm{w_x(s)}_{L^\infty}^2+\norm{b_x(s)}_{L^\infty}^2\}\dd s, \\
 J_3(t)& \leq\varepsilon\int_0^t \int_\Omega
 \frac{(\theta-2)_+u_x^2}{v}\dd x\dd s
 +C_\varepsilon \int_0^t \norm{b(s)}_{L^\infty}^4\dd s,\\
 J_4(t)
 & \leq\varepsilon\int_0^t \int_\Omega\theta_x^2\dd x\dd s
 +C_\varepsilon\int_0^t \norm{b(s)}_{L^\infty}^4\dd s,\\
 J_5(t)
 & \leq\varepsilon\int_0^t \int_\Omega u^2u_x^2\dd x\dd s
 +C_\varepsilon\int_0^t \norm{b(s)}_{L^\infty}^4\dd s.
\end{align*}
The estimates for $J_1$ and $J_2$ use \eqref{eq:hot},
\eqref{eq:state}, and \eqref{eq:ent-id}. For
$J_3,J_4$, and $J_5$, we additionally use Young's inequality and
\[
 \int_\Omega(\theta-2)_+\dd x+\norm{u(t)}_{L^2}^2\leq C.
\]
Moreover, \eqref{eq:tr-max} and
\eqref{eq:b-inf} give
\[ \int_0^T \left(
 \norm{w_x(t)}_{L^\infty}^2+\norm{b_x(t)}_{L^\infty}^2
 +\norm{b(t)}_{L^\infty}^4\right)\dd t\le C.
\]
Substituting the estimates for $J_1,\ldots,J_5$ and absorbing the
terms with $\varepsilon$, we obtain
\begin{align}
 &\int_\Omega\left\{\frac12(\theta-2)_+^2
 +u^2(\theta-2)_++\frac{\Lambda}{4}u^4\right\}\dd x
 +c\int_0^t\!\int_\Omega
 \{(\theta+u^2)u_x^2+\theta_x^2\}\dd x\dd s\notag\\ &\leq C+C\int_0^t
 \norm{\left(\theta(s)-\frac32\right)_+}_{L^\infty}^2\dd s\notag\\&\leq C+C\int_0^t
 \left(\int_{\{\theta>\frac32\}}|\theta_x|\dd x\right)^2\dd s\notag\\&\leq C+C\int_0^t
 \left(\int_{\{\theta>\frac32\}}v\theta\dd x\right)
 \left(\int_{\{\theta>\frac32\}}\frac{\theta_x^2}{v\theta}\dd x\right)\dd s\notag\\ &\leq  C+   C\delta\int_0^t\!\int_\Omega\theta_x^2\dd x\dd s+C_\delta\int_0^t
 \int_{\{\theta>\frac32\}}\frac{\theta_x^2}{v\theta^2}\dd x \dd s.
                                                        \label{eq:hotid0}
\end{align}
Choosing $\delta>0$ sufficiently small and using
\eqref{eq:ent-id}, we obtain \eqref{eq:hot-0}.
\end{proof}

We can now estimate the first spatial derivative of the specific volume.

\begin{lemma}\label{l:vx}
For every fixed $\beta\geq0$,
\begin{equation}
 \sup_{0\leq t\leq T}\int_\Omega v_x^2\dd x
 +\int_0^T\!\int_\Omega v_x^2\dd x\dd t\leq C.
                                                               \label{eq:vx}
\end{equation}
\end{lemma}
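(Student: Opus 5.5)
The approach is the standard effective-velocity estimate. In case $\eqref{eq:viscosity}_1$ I set $\Theta\triangleq M(v)_x=\frac{\mu(v)v_x}{v}$. Since $M(v)_t=\frac{\mu u_x}{v}$, the momentum equation can be written as
\[
 \left(u-M(v)_x\right)_t=-\left(\frac\theta v+\frac12|b|^2\right)_x .
\]
In case $\eqref{eq:viscosity}_2$ I would instead use $(\log v)_x$ together with the divided form \eqref{eq:sigma-T}. This gives
\[
 \Bigl(\frac u\mu-(\log v)_x\Bigr)_t=-\Bigl(\frac{\theta}{\mu v}+\frac{|b|^2}{2\mu}\Bigr)_x-h ,
\]
and the extra term $h$ is controlled by $\varepsilon_\gamma$ through \eqref{eq:hmu-bd-T}. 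Because of \eqref{eq:state}, $\Theta$ is comparable to $v_x$ pointwise, so it suffices to bound $Q\triangleq \Theta-u$ (respectively $(\log v)_x-\frac u\mu$) in $L^\infty_tL^2_x\cap L^2_tL^2_x$.

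The first step is to expand the pressure gradient:
\[
 \Bigl(\frac\theta v\Bigr)_x=\frac{\theta_x}{v}-\frac{\theta v_x}{v^2}=\frac{\theta_x}{v}-\frac{\theta}{v\mu}\,\Theta .
\]
Multiplying the equation for $Q$ by $Q$ gives
\[
 \frac12\frac{\dd}{\dd t}\int_\Omega Q^2\dd x+\int_\Omega\frac{\theta}{v\mu}Q^2\dd x
 =\int_\Omega\Bigl(-\frac{\theta}{v\mu}u+\frac{\theta_x}{v}+b\cdot b_x\Bigr)Q\dd x\;(+\text{$h$-terms}).
\]
The damping coefficient $\frac{\theta}{v\mu}\ge c$ by \eqref{eq:state}, so the left side gives $c\|Q\|_{L^2}^2$. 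On the right, the term $\int b\cdot b_x Q$ is bounded by $\varepsilon\|Q\|^2+C\|b\|_{L^\infty}^2\|b_x\|_{L^2}^2$. Its time integral is finite by \eqref{eq:b-inf}.

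The main obstacle is the term $\frac{\theta}{v\mu}uQ$, because $\theta$ has no upper bound at this stage. I would split on $\{\theta\le2\}$ and $\{\theta>2\}$. On $\{\theta\le2\}$ the term is at most $\varepsilon\|Q\|^2+C\|u\|_{L^2}^2$, and $\int_0^T\|u\|_{L^2}^2$ is not yet available, so I would avoid it as follows. I rewrite $\frac{\theta}{v\mu}uQ$ via $Q=\Theta-u$, so it equals $\frac{\theta}{v\mu}u\Theta-\frac{\theta}{v\mu}u^2$. The piece $-\frac{\theta}{v\mu}u^2$ is signed, and I would move it to the left. For the piece $\frac{\theta}{v\mu}u\Theta$ I would use that $u\Theta$ appears in $\frac{\dd}{\dd t}\int u\Theta$ up to controllable terms; this is the usual trick.

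The simpler route, which I would try first, is to multiply the equation instead by $\Theta$, working with $\frac12\frac{\dd}{\dd t}\int\Theta^2-\frac{\dd}{\dd t}\int u\Theta$. Then $u_t\Theta$ is rewritten as $\frac{\dd}{\dd t}\int u\Theta+\int \frac{\mu u_x^2}{v}$ by integrating by parts in $x$. Dissipation $\int u_x^2$ is integrable in time by \eqref{eq:cap}, and the cross term $\int u\Theta$ is bounded by $\varepsilon\|\Theta\|^2+C$ using \eqref{eq:ent-id}.

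On $\{\theta>2\}$ the factor $\theta\le 4(\theta-\frac32)_+$ is bounded by $\|(\theta-\frac32)_+\|_{L^\infty}$. For $\beta=0$ its square is time-integrable by \eqref{eq:hot-0}. For $\beta>0$ I bound $\|(\theta-\frac32)_+\|_{L^\infty}^2\le C\int\theta\cdot\int\theta_x^2/\theta$ over $\{\theta>\frac32\}$ and apply \eqref{eq:cappos}. The term $\int\frac{\theta_x}{v}\Theta$ is handled by $\varepsilon\|\Theta\|^2+C\|\theta_x\|_{L^2}^2$. For $\beta=0$, $\|\theta_x\|_{L^2}^2$ is integrable by \eqref{eq:hot-0}. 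For $\beta>0$ one has $\theta_x^2\le \|\theta\|_{L^\infty}\theta_x^2/\theta$, so this route yields $\int_0^T\|\theta\|_{L^\infty}(\varepsilon\|\Theta\|^2)$, which I would close with a Gronwall argument using $\int_0^T\|(\theta-\frac32)_+\|_{L^\infty}\dd t$ bounded in the same way.

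Collecting terms gives
\[
 \frac{\dd}{\dd t}A(t)+c\|\Theta\|_{L^2}^2\le C f(t)\,A(t)+Cf(t),\qquad f\in L^1(0,\infty),
\]
where $A\sim\|\Theta\|^2$ modulo the bounded cross term. Gronwall's inequality then gives \eqref{eq:vx}, because $|\Theta|\sim|v_x|$. In case $\eqref{eq:viscosity}_2$, the $h$-contributions and the terms involving $\mu_x$ are absorbed using the smallness of $\varepsilon_\gamma$ and \eqref{eq:mu-b2-T}.
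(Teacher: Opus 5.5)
Your second route is the paper's argument: you multiply by $\Theta=M(v)_x$, carry the cross term $-\int_\Omega u\Theta\dd x$, and use $-\int_\Omega u\Theta_t\dd x=\int_\Omega\frac{\mu u_x^2}{v}\dd x$. It produces no $\frac{\theta}{v\mu}u\Theta$ term, so the discussion of $Q$ can be dropped.

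\textbf{The gap.} It lies in the term $\int_\Omega\frac{\theta_x}{v}\Theta\dd x$ for $\beta>0$. Only $0<\beta<2$ is really at stake: for $\beta=0$ your use of \eqref{eq:hot-0} is fine, and for $\beta\geq2$ one has $\theta_x^2\leq C\theta^{\beta-2}\theta_x^2$ because $\theta\geq c$. You close this term by Gronwall, using a bound on $\int_0^T\norm{(\theta-\frac32)_+}_{L^\infty}\dd t$ that is uniform in $T$. No such bound is available. From \eqref{eq:hot} and \eqref{eq:cappos} you only get $\norm{(\theta-\frac32)_+}_{L^\infty}^2\leq C\int_\Omega\frac{\theta_x^2}{\theta}\dd x$, i.e.\ square-integrability in time. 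On $(0,\infty)$, Cauchy--Schwarz then gives only $\int_0^T\norm{(\theta-\frac32)_+}_{L^\infty}\dd t\leq C\sqrt T$, so your Gronwall factor grows with $T$.

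The Gronwall could be rescued by writing $\theta\leq\frac32+f\leq\frac52+f^2$ with $f=\norm{(\theta-\frac32)_+}_{L^\infty}$ and using $f^2\in L^1(0,\infty)$. None of this is needed, however.

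\textbf{The missing observation.} The damping term is $\int_\Omega\frac{\theta}{\mu v}\Theta^2\dd x$, and you should not replace it by $c\norm{\Theta}_{L^2}^2$ before estimating the right-hand side. Keeping the weight and using only $\mu,v\leq C$ and $\theta\geq c$, Young's inequality gives
\[
 \left|\int_\Omega\frac{\theta_x\Theta}{v}\dd x\right|
 +\left|\int_\Omega\Theta\,b\cdot b_x\dd x\right|
 \leq\frac12\int_\Omega\frac{\theta\Theta^2}{\mu v}\dd x
 +C\int_\Omega\left\{\frac{\theta_x^2}{v\theta}+|b|^2|b_x|^2\right\}\dd x.
\]
This is the content of \eqref{eq:lj09}. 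The time integral of the right-hand side, together with that of $\int_\Omega\frac{\mu u_x^2}{v}\dd x$, is bounded by \eqref{eq:prehi}. That is, use \eqref{eq:cappos} for $\beta>0$, and \eqref{eq:hot-0} with $\theta\geq c$ for $\beta=0$. A plain time integration then closes the estimate, with no upper bound on $\theta$ and no Gronwall. Finally, $\theta\geq c$ turns the weighted dissipation into $\int_0^T\norm{\Theta}_{L^2}^2\dd t$.

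\textbf{The temperature-dependent case.} In case $\eqref{eq:viscosity}_2$ your sketch cites the wrong tools. Estimates \eqref{eq:hmu-bd-T} and \eqref{eq:mu-b2-T} are cell-wise $L^1$ bounds of size $C\varepsilon_\gamma(1+t-s)$. They cannot be paired with $q\in L^2$, and they are not uniform in $T$. The paper instead bounds the whole remainder $\mathcal R_q$ of \eqref{eq:q-rem-T} by $C\gamma(1+M)^2\leq C\varepsilon_\gamma$, using the bootstrap norm $\mathcal N_T\leq M$; see \eqref{eq:q-rem-bd-T}.
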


\begin{proof}
Estimates \eqref{eq:state}, \eqref{eq:cap},
and \eqref{eq:b-inf}, together with
\eqref{eq:cappos} for $\beta>0$ and
\eqref{eq:hot-0} for $\beta=0$, give
\begin{align}
 \int_0^T\!\int_\Omega\biggl\{&
 \mu u_x^2+|w_x|^2+|b_x|^2
 +\frac{\theta_x^2}{ \theta}+|b|^2|b_x|^2\biggr\}
 \dd x\dd t\leq C.                                  \label{eq:prehi}
\end{align}
By \eqref{eq:hot}, \eqref{eq:state}, and the Cauchy--Schwarz inequality,
\[
 \norm{(\theta(t)-2)_+}_{L^\infty}^2
 \leq\left(\int_{\{\theta>2\}}|\theta_x|\dd x\right)^2
 \leq C\int_\Omega\frac{\theta_x^2}{\theta}\dd x.
\]
Integrating in time and using \eqref{eq:prehi}, we obtain
\begin{equation}
 \int_0^T
 \norm{(\theta(t)-2)_+}_{L^\infty}^2 \dd t\leq C.
                                                        \label{eq:excurs}
\end{equation}

In case $\eqref{eq:viscosity}_1$, set
$q=M(v)_x=\frac{\mu v_x}{v}$. Using $u_x=v_t$ in \eqref{eq:mom}, we obtain
\begin{align*}
 M(v)_{xt} -u_t&=\left(\frac{\theta}{v}+\frac12|b|^2\right)_x\\&=-\frac{\theta v_x}{v^2}+\frac{\theta_x}{v}+ b\cdot b_x.
\end{align*}
Multiply this identity by $q$ and integrate over $\Omega$.
Using
\[
 -\int_\Omega u\,q_t\,\dd x
 =-\int_\Omega u\left(\frac{\mu u_x}{v}\right)_x\,\dd x
 =\int_\Omega\frac{\mu u_x^2}{v}\,\dd x
\]
we obtain
\begin{align}
 &\frac{\dd}{\dd t}\int_\Omega
 \left\{\frac12|q|^2-uq\right\}\dd x
 +\int_\Omega\frac{\theta|q|^2}{\mu v}\dd x\notag\\&=\int_\Omega\left\{
 \frac{\mu u_x^2}{v}+\frac{\theta_x q}{v}
 +q\,b\cdot b_x\right\}\dd x\\
 &\leq\frac12\int_\Omega
 \frac{\theta|q|^2}{\mu v}\dd x
 +C\int_\Omega\left\{u_x^2+\frac{\theta_x^2}{v\theta}
 +|b|^2|b_x|^2\right\}\dd x, \label{eq:lj09}
\end{align}
where the last inequality follows from \eqref{eq:state}
and Young's inequality.

Finally, by \eqref{eq:ent-id},
\[
 \int_\Omega\left\{\frac12|q|^2
 -uq\right\}\dd x
 \geq\frac14\int_\Omega|q|^2\dd x
 -C\int_\Omega u^2\dd x
 \geq\frac14\int_\Omega|q|^2\dd x-C.
\]
Integrating \eqref{eq:lj09} over $(0,t)$ and using this lower bound,
\eqref{eq:init}, \eqref{eq:state}, and
\eqref{eq:prehi}, we obtain
\[
 \sup_{0\leq t\leq T}\int_\Omega|q|^2\dd x
 +\int_0^T\!\int_\Omega|q|^2\dd x\dd t\leq C.
\]
Since $v_x=\frac{vq}{\mu}$ and both $v$ and $\mu$ are bounded above and away from zero, this proves
\eqref{eq:vx}.

In case $\eqref{eq:viscosity}_2$,
using $u_x=v_t$ in \eqref{eq:mom} and dividing by $\mu$, we obtain,
with $q=(\log v)_x$,
\begin{align*}
 q_t-\left(\frac u\mu\right)_t
 =-\frac{\theta q}{\mu v}+\frac{\theta_x}{\mu v}
   +\frac{b\cdot b_x}{\mu}
   +\frac{u\mu_t}{\mu^2}-\frac{\mu_xu_x}{\mu v}.
\end{align*}
Multiply this identity by $q$ and integrate over $\Omega$.
Since $q_t=\left(\frac{u_x}{v}\right)_x$ and $u=0$ at the half-line endpoint,
\[
 -\int_\Omega\frac u\mu q_t\dd x
 =\int_\Omega\frac{u_x^2}{\mu v}\dd x
  -\int_\Omega\frac{u\mu_xu_x}{\mu^2v}\dd x.
\]
Consequently,
\begin{align}
 &\frac{\dd}{\dd t}\int_\Omega
       \left\{\frac12q^2-\frac u\mu q\right\}\dd x
   +\int_\Omega\frac{\theta q^2}{\mu v}\dd x\notag\\
 &=\int_\Omega\left\{\frac{u_x^2}{\mu v}
          +\frac{\theta_xq}{\mu v}+\frac{q\,b\cdot b_x}{\mu}\right\}\dd x
      +\mathcal R_q(t)\notag\\
 &\leq\frac12\int_\Omega\frac{\theta q^2}{\mu v}\dd x
      +C\int_\Omega\left\{u_x^2+\frac{\theta_x^2}{v\theta}
                           +|b|^2|b_x|^2\right\}\dd x+|\mathcal R_q(t)|,
 \label{eq:lj09-T}
\end{align}
where the last inequality follows from \eqref{eq:state},
\eqref{eq:mu-basic}, and Young's inequality, and
\begin{equation}
 \mathcal R_q(t)=\int_\Omega\left\{
 \frac{u\mu_tq}{\mu^2}-\frac{\mu_xu_xq}{\mu v}
                      -\frac{u\mu_xu_x}{\mu^2v}\right\}\dd x.
 \label{eq:q-rem-T}
\end{equation}
Since $\theta\geq c$ is already known,
$|\mu_x|\leq C\gamma|\theta_x|$ and
$|\mu_t|\leq C\gamma|\theta_t|$. Moreover,
\eqref{eq:GN} and \eqref{eq:boot-norm} give
$\int_0^T\norm{u_x}_{L^\infty}^2\dd t\leq CM$.
Thus, using the temporary bounds only in the terms carrying $\gamma$,
\begin{align}
 \int_0^T|\mathcal R_q(t)|\dd t
 \leq C\gamma\biggl\{&
 \sup_{t\leq T}\norm u_{L^\infty}
 \left(\int_0^T\norm{\theta_t}_{L^2}^2\dd t\right)^{\frac{1}{2}}
 \left(\int_0^T\norm q_{L^2}^2\dd t\right)^{\frac{1}{2}}\notag\\
 &+\sup_{t\leq T}\norm{\theta_x}_{L^2}
 \left(\int_0^T\norm{u_x}_{L^\infty}^2\dd t\right)^{\frac{1}{2}}
 \left(\int_0^T\norm q_{L^2}^2\dd t\right)^{\frac{1}{2}}\notag\\
 &+\sup_{t\leq T}\norm u_{L^\infty}
 \left(\int_0^T\norm{\theta_x}_{L^2}^2\dd t\right)^{\frac{1}{2}}
 \left(\int_0^T\norm{u_x}_{L^2}^2\dd t\right)^{\frac{1}{2}}\biggr\}\notag\\
 &\leq C\gamma(1+M)^2\leq C\varepsilon_{\gamma}.
 \label{eq:q-rem-bd-T}
\end{align}

Finally, by \eqref{eq:ent-id},
\[
 \int_\Omega\left\{\frac12|(\log v)_x|^2
 -\frac u\mu(\log v)_x\right\}\dd x
 \geq\frac14\int_\Omega|(\log v)_x|^2\dd x
 -C\int_\Omega u^2\dd x
 \geq\frac14\int_\Omega|(\log v)_x|^2\dd x-C.
\]
Integrating \eqref{eq:lj09-T} over $(0,t)$ and using this lower bound,
\eqref{eq:init}, \eqref{eq:state},
\eqref{eq:prehi}, and \eqref{eq:q-rem-bd-T}, we obtain
\[
 \sup_{0\leq t\leq T}\int_\Omega|(\log v)_x|^2\dd x
 +\int_0^T\!\int_\Omega|(\log v)_x|^2\dd x\dd t\leq C.
\]
Since $v_x=v(\log v)_x$ and $c\leq v\leq C$, this proves
\eqref{eq:vx}.
\end{proof}

We next estimate the first and second derivatives of $u$.

\begin{proposition}\label{p:long}
For every fixed $\beta\geq0$,
\begin{align}
 &\sup_{0\leq t\leq T}\int_\Omega u_x^2\dd x+\int_0^T\!\int_\Omega
 \{u_{xx}^2+u_t^2+\theta_x^2\}\dd x\dd t\leq C.
                                                               \label{eq:long}
\end{align}
Moreover,
\begin{align}
 \int_0^T\biggl[&
 \norm{u_x(t)}_{L^\infty}^2+\norm{w_x(t)}_{L^\infty}^2
 +\norm{b_x(t)}_{L^\infty}^2\notag\\
 &+\{\norm{u_x(t)}_{L^\infty}^2+\norm{w_x(t)}_{L^\infty}^2
 +\norm{b_x(t)}_{L^\infty}^2\}^2\biggr]\dd t\leq C.
                                                               \label{eq:gmax}
\end{align}
\end{proposition}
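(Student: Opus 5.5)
The plan is to test the momentum equation \eqref{eq:mom} with $-u_{xx}$ (equivalently, to work with the effective flux $\sigma$) and combine the result with the bounds already established. With \eqref{eq:state}, \eqref{eq:prehi}, \eqref{eq:vx}, \eqref{eq:trflux} and \eqref{eq:excurs} available, this should give the $u_x$ estimate. Writing $\left(\frac{\mu u_x}{v}\right)_x=\frac{\mu u_{xx}}{v}+\left(\frac{\mu}{v}\right)_x u_x$ and integrating by parts in time (with $u=0$ at $x=0$ in the half-line cases) gives
\[
\frac12\frac{\dd}{\dd t}\int_\Omega u_x^2\dd x+\int_\Omega\frac{\mu u_{xx}^2}{v}\dd x
=\int_\Omega\Bigl(P+\tfrac12|b|^2\Bigr)_x u_{xx}\dd x-\int_\Omega\Bigl(\frac{\mu}{v}\Bigr)_x u_xu_{xx}\dd x .
\]
The pressure term contains $\theta_x/v-\theta v_x/v^2+b\cdot b_x$. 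The bound on $\theta_x/v$ uses $\theta_x^2\le C\theta\cdot\theta_x^2/\theta\le C(1+\|(\theta-2)_+\|_{L^\infty})\theta_x^2/\theta$. For $\beta=0$ it is cleaner to invoke \eqref{eq:hot-0}, which already gives $\int_0^T\!\int\theta_x^2\le C$. For $\beta>0$ I would obtain $\int\!\int\theta_x^2$ as part of the statement, by a truncated energy estimate in the spirit of Proposition~\ref{p:hot0}. Concretely, multiply \eqref{eq:temp} by $(\theta-2)_+$ and use that $\theta^\beta\ge c$, so the conduction term dominates $\int_{\{\theta>2\}}\theta_x^2$, while on $\{\theta\le2\}$ the weight $\theta_x^2/\theta$ suffices. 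The term $\theta v_x$ is controlled by $\|\theta\|_{L^\infty}\|v_x\|_{L^2}$, where $\|\theta\|_{L^\infty}\le C+\|(\theta-2)_+\|_{L^\infty}$. After Young, this leaves $\int_0^T\|(\theta-2)_+\|_{L^\infty}^2\|v_x\|_{L^2}^2\dd t\le C$, by \eqref{eq:vx} and \eqref{eq:excurs}. The magnetic term is bounded by $\int|b|^2|b_x|^2$, which lies in $L^1_t$ by \eqref{eq:prehi}.

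The commutator term $(\mu/v)_x u_xu_{xx}$ is the delicate one. In case $\eqref{eq:viscosity}_1$ it equals $(\mu'(v)/v-\mu/v^2)v_xu_xu_{xx}$, and I would bound it by
\[
\varepsilon\|u_{xx}\|_{L^2}^2+C\|v_x\|_{L^2}^2\|u_x\|_{L^\infty}^2\le\varepsilon\|u_{xx}\|_{L^2}^2+C\|u_x\|_{L^2}\|u_{xx}\|_{L^2},
\]
using \eqref{eq:GN} and $\sup_t\|v_x\|_{L^2}\le C$. Then
\[
\frac{\dd}{\dd t}\|u_x\|_{L^2}^2+c\|u_{xx}\|_{L^2}^2\le C\|u_x\|_{L^2}^2+f(t),\qquad f\in L^1(0,T),
\]
and since $\int_0^T\|u_x\|_{L^2}^2\dd t\le C$ by \eqref{eq:cap}, integration in time closes without any Gronwall exponent. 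In case $\eqref{eq:viscosity}_2$ there is an extra piece $\gamma\mu\theta_xu_xu_{xx}/(\theta v)$, which is bounded by $C\gamma\|\theta_x\|_{L^2}\|u_x\|_{L^\infty}\|u_{xx}\|_{L^2}$. I would handle it with the temporary bound \eqref{eq:boot}, namely $\sup\|\theta_x\|_{L^2}^2\le M$, which gives a contribution $C\gamma M\|u_x\|_{L^2}\|u_{xx}\|_{L^2}$. This is absorbed as before, with a constant $\varepsilon_\gamma$-small. The bound on $u_t$ then follows from \eqref{eq:mom} itself, since $|u_t|\le C(|u_{xx}|+|v_xu_x|+|\theta_x|+\theta|v_x|+|b||b_x|+\gamma|\theta_xu_x|)$.

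For \eqref{eq:gmax}, the inequality $\|u_x\|_{L^\infty}^2\le2\|u_x\|_{L^2}\|u_{xx}\|_{L^2}$ gives $\int_0^T\|u_x\|_{L^\infty}^2\le C$. It also gives $\int_0^T\|u_x\|_{L^\infty}^4\le4\sup_t\|u_x\|_{L^2}^2\int_0^T\|u_{xx}\|_{L^2}^2\le C$. The terms involving $w_x$ and $b_x$ are already contained in \eqref{eq:tr-max}, and the mixed square is bounded by $3$ times the sum of the fourth powers.

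The main obstacle is closing the $\theta_x^2$ bound for $\beta>0$ together with the high-temperature pressure term, because no upper bound for $\theta$ is available yet. My first attempt is to rely on \eqref{eq:excurs} in the form $\|(\theta-2)_+\|_{L^\infty}^2\in L^1_t$, pairing it with $\sup_t\|v_x\|_{L^2}$ as above. If the unweighted $\theta_x^2$ bound is not directly available for large $\beta$, I would instead keep the weighted dissipation $\theta^\beta\theta_x^2$ coming from the $(\theta-2)_+$ test and interpolate.
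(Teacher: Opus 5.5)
Your proposal is correct and follows the paper's proof: test the momentum equation with $-u_{xx}$, treat the $\theta_x$, $\theta v_x$ (via \eqref{eq:excurs} and \eqref{eq:vx}), magnetic and commutator terms the same way, use \eqref{eq:hot-0} for $\beta=0$, and couple with the $(\theta-2)_+$ test of \eqref{eq:temp} for $\beta>0$, whose right-hand side closes via $\norm{u_x}_{L^\infty}^2\leq\varepsilon\norm{u_{xx}}_{L^2}^2+C_\varepsilon\norm{u_x}_{L^2}^2$ and \eqref{eq:tr-max}; then $u_t$ comes from the equation and \eqref{eq:gmax} from \eqref{eq:GN} and \eqref{eq:tr-max}. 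The differences are minor: the paper uses the coupling only for $0<\beta<2$, since for $\beta\geq2$ the lower bound $\theta\geq c$ gives $\theta_x^2\leq C\theta^{\beta-2}\theta_x^2/v$ directly, so your large-$\beta$ worry is unfounded; and in case $\eqref{eq:viscosity}_2$ the paper bounds the time integral of $\int_\Omega\mu_xu_xu_{xx}/v\dd x$ by $C\gamma M^{3/2}$ using the bootstrap norm, whereas you absorb it pointwise in time, which also works once your bound is corrected to $C\gamma M^{1/2}\norm{u_x}_{L^2}^{1/2}\norm{u_{xx}}_{L^2}^{3/2}$.
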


\begin{proof}
Expanding \eqref{eq:mom}, we obtain
\begin{equation}
 u_t=\frac{\mu u_{xx}}{v}+\frac{\mu_xu_x}{v}-\frac{\mu u_xv_x}{v^2}-\frac{\theta_x}{v}
 +\frac{\theta v_x}{v^2}-b\cdot b_x.                \label{eq:momexp}
\end{equation}
Multiplying \eqref{eq:momexp} by $-u_{xx}$, integrating over
$\Omega$, and using \eqref{eq:bc-C}--\eqref{eq:bc-D}, we obtain
\begin{align*}
 \frac12\frac{\dd}{\dd t}\int_\Omega u_x^2\dd x
 +\int_\Omega\frac{\mu u_{xx}^2}{v}\dd x=\int_\Omega\left\{
 \frac{\mu u_xv_xu_{xx}}{v^2}+\frac{\theta_xu_{xx}}{v}
 -\frac{\theta v_xu_{xx}}{v^2}
 +(b\cdot b_x)u_{xx}\right\}\dd x+\mathcal R_u(t).
\end{align*}
Here
\[
 \mathcal R_u(t)=-\int_\Omega\frac{\mu_xu_xu_{xx}}v\dd x.
\]
In case $\eqref{eq:viscosity}_1$, $\mu_x=\mu'(v)v_x$, so
\[
 |\mathcal R_u(t)|
 \leq C\norm{v_x}_{L^2}\norm{u_x}_{L^\infty}\norm{u_{xx}}_{L^2}
 \leq\varepsilon\norm{u_{xx}}_{L^2}^2
       +C_\varepsilon\norm{u_x}_{L^2}^2.
\]
In case $\eqref{eq:viscosity}_2$, by \eqref{eq:state}, \eqref{eq:mu-basic}, and \eqref{eq:boot-norm},
\begin{align}
 \int_0^T|\mathcal R_u(t)|\dd t
 &\leq C\gamma\sup_{t\leq T}\norm{\theta_x}_{L^2}
        \left(\int_0^T\norm{u_x}_{L^\infty}^2\dd t\right)^{\frac{1}{2}}
        \left(\int_0^T\norm{u_{xx}}_{L^2}^2\dd t\right)^{\frac{1}{2}}\notag\\
 &\leq C\gamma M^{\frac{3}{2}}\leq C\varepsilon_{\gamma}.
 \label{eq:u-rem-bd}
\end{align}
For every $\varepsilon>0$, \eqref{eq:state},
\eqref{eq:GN}, and Young's inequality give
\begin{align*}
 \left|\int_\Omega\frac{\mu u_xv_xu_{xx}}{v^2}\dd x\right|
 &\leq C\norm{v_x}_{L^2}\norm{u_x}_{L^\infty}\norm{u_{xx}}_{L^2}\\
 &\leq C \norm{u_x}_{L^2}^{\frac12}
 \norm{u_{xx}}_{L^2}^{\frac32}\\
 &\leq\varepsilon\norm{u_{xx}}_{L^2}^2
 +C_\varepsilon \norm{u_x}_{L^2}^2,\\
 \left|\int_\Omega\frac{\theta_xu_{xx}}{v}\dd x\right|
 &\leq\varepsilon\norm{u_{xx}}_{L^2}^2
 +C_\varepsilon\int_\Omega\theta_x^2\dd x,\\
 \left|\int_\Omega\frac{\theta v_xu_{xx}}{v^2}\dd x\right|
 &\leq\varepsilon\norm{u_{xx}}_{L^2}^2
 +C_\varepsilon\int_\Omega\theta^2v_x^2\dd x,\\
 \left|\int_\Omega(b\cdot b_x)u_{xx}\dd x\right|
 &\leq \norm{u_{xx}}_{L^2} \norm{b}_{L^\infty} \norm{b_{x}}_{L^2}
  \leq\varepsilon\norm{u_{xx}}_{L^2}^2
 +C_\varepsilon\int_\Omega |b_x|^2\dd x.
\end{align*}
Choosing $\varepsilon>0$ sufficiently small and substituting these
estimates into the identity above, we obtain
\begin{align}
 \frac{\dd}{\dd t}\int_\Omega u_x^2\dd x
 +c\int_\Omega u_{xx}^2\dd x
 &\leq C \norm{u_x(t)}_{L^2}^2
 +C\int_\Omega\theta_x^2\dd x+C\norm{v_x(t)}_{L^2}^2\notag\\
 &\quad+C\norm{(\theta(t)-2)_+}_{L^\infty}^2
 +C\int_\Omega |b_x|^2\dd x+2|\mathcal R_u(t)|.
                                                        \label{eq:ux-est}
\end{align}
Here we used $\theta\leq2+(\theta-2)_+$ and
\[
 \int_\Omega\theta^2v_x^2\dd x \leq C\norm{v_x(t)}_{L^2}^2
 +C\norm{(\theta(t)-2)_+}_{L^\infty}^2\norm{v_x(t)}_{L^2}^2 \leq C\norm{v_x(t)}_{L^2}^2
 +C\norm{(\theta(t)-2)_+}_{L^\infty}^2.
\]

We claim that, for every fixed $\beta\geq0$,
\begin{align}
 \sup_{0\leq t\leq T}\int_\Omega u_x^2\dd x
 +\int_0^T\!\int_\Omega
 \{u_{xx}^2+\theta_x^2\}\dd x\dd t\leq C. \label{eq:lj10}
\end{align}

If $\beta=0$, \eqref{eq:hot-0} already gives
$\int_0^T\!\int_\Omega\theta_x^2\dd x\dd t\leq C$.
If $\beta\geq2$, the bounds $\theta\geq c$ and $v\leq C$ give
\[
 \int_0^T\!\int_\Omega\theta_x^2\dd x\dd t
 \leq C\int_0^T\!\int_\Omega
 \frac{\theta^{\beta-2}\theta_x^2}{v}\dd x\dd t\leq C.
\]
Integrating \eqref{eq:ux-est} over $(0,t)$ and using
\eqref{eq:prehi}, the preceding bounds for $\mathcal R_u$, and
\cref{l:vx,eq:excurs,eq:b-inf}, we absorb the multiple of
$\int_0^t\norm{u_{xx}}_{L^2}^2\dd s$ in case $\eqref{eq:viscosity}_1$
and obtain
\begin{align*}
 \sup_{0\leq t\leq T}\norm{u_x(t)}_{L^2}^2
 +\int_0^T\norm{u_{xx}(t)}_{L^2}^2\dd t\leq C,
 \qquad \beta=0\quad\text{or}\quad\beta\geq2.
\end{align*}

It remains to consider $0<\beta<2$.
Multiplying \eqref{eq:temp} by $(\theta-2)_+$ and
integrating over $\Omega$, we obtain
\begin{align}
 &\frac12\frac{\dd}{\dd t}\int_\Omega(\theta-2)_+^2\dd x
 +\int_{\{\theta>2\}}\frac{\theta^\beta\theta_x^2}{v}\dd x=-\int_\Omega\frac{\theta u_x}{v}(\theta-2)_+\dd x
 +\int_\Omega\frac{(\theta-2)_+
 (\mu u_x^2+|w_x|^2+|b_x|^2)}{v}\dd x.
                                                               \label{eq:therm}
\end{align}
By \eqref{eq:hot} and \eqref{eq:state},
\[
 \int_\Omega(\theta-2)_+\dd x\leq C,\qquad
 \int_\Omega\theta(\theta-2)_+\dd x
 \leq C\norm{(\theta(t)-2)_+}_{L^\infty}.
\]
Hence, for every $\varepsilon>0$, Young's inequality and
\eqref{eq:GN} give
\begin{align*}
 &\left|\int_\Omega\frac{\theta u_x}{v}(\theta-2)_+\dd x\right|
 +\int_\Omega\frac{(\theta-2)_+
 (\mu u_x^2+|w_x|^2+|b_x|^2)}{v}\dd x\\
 &\quad\leq C\{\norm{u_x(t)}_{L^\infty}^2+\norm{w_x(t)}_{L^\infty}^2
 +\norm{b_x(t)}_{L^\infty}^2\}
 +C\norm{u_x(t)}_{L^\infty}\norm{(\theta(t)-2)_+}_{L^\infty}\\
 &\quad\leq\varepsilon\norm{u_{xx}(t)}_{L^2}^2
 +C_\varepsilon\norm{u_x(t)}_{L^2}^2
 +C\{\norm{w_x(t)}_{L^\infty}^2+\norm{b_x(t)}_{L^\infty}^2\}
 +C\norm{(\theta(t)-2)_+}_{L^\infty}^2.
\end{align*}

Multiply \eqref{eq:therm} by $2\Lambda$ with
$\Lambda$ sufficiently large and add it to \eqref{eq:ux-est}.
Choosing $\varepsilon$ sufficiently small and using the positive
lower bound for $\frac{\theta^\beta}{v}$ on $\{\theta>2\}$,
we obtain
\begin{align}
 &\frac{\dd}{\dd t}\left\{
 \norm{u_x(t)}_{L^2}^2+\Lambda\int_\Omega(\theta-2)_+^2\dd x\right\}
 +c\int_\Omega u_{xx}^2\dd x
 +c\int_{\{\theta>2\}}\theta_x^2\dd x\notag\\
 &\quad\leq  C\biggl\{
 \norm{u_x(t)}_{L^2}^2
 +\int_{\{\theta\leq2\}}\theta_x^2\dd x
 + \norm{(\theta(t)-2)_+}_{L^\infty}^2 +\norm{v_x(t)}_{L^2}^2\notag\\
 &\qquad+\norm{b_x  }_{L^2}^2
 +\norm{w_x }_{L^\infty}^2+\norm{b_x }_{L^\infty}^2\biggr\}+2|\mathcal R_u(t)|.
                                                               \label{eq:uxheat}
\end{align}
Integrating \eqref{eq:uxheat} over $(0,t)$ and
using \eqref{eq:prehi}, \eqref{eq:excurs},
\cref{l:vx}, \eqref{eq:tr-max}, and the bounds for $\mathcal R_u$, we absorb the multiple of
$\int_0^t\norm{u_{xx}}_{L^2}^2\dd s$ in case $\eqref{eq:viscosity}_1$
and obtain
\begin{align*}
 \sup_{0\leq t\leq T}\norm{u_x(t)}_{L^2}^2
 +\int_0^T\norm{u_{xx}(t)}_{L^2}^2\dd t+\int_0^T\!\int_{\{\theta>2\}}
 \theta_x^2\dd x\dd t\leq C.
\end{align*}
Together with \eqref{eq:state} and
\eqref{eq:prehi}, this proves \eqref{eq:lj10} for every
fixed $\beta\geq0$.

To estimate $u_t$, in case $\eqref{eq:viscosity}_1$ we have
\[
 \int_0^T\!\int_\Omega|\mu_xu_x|^2\dd x\dd t
 \leq C\sup_{t\leq T}\norm{v_x}_{L^2}^2
       \int_0^T\norm{u_x}_{L^\infty}^2\dd t\leq C,
\]
where the last integral is bounded by \eqref{eq:GN} and
\eqref{eq:lj10}. In case $\eqref{eq:viscosity}_2$,
\begin{equation}
 \int_0^T\!\int_\Omega|\mu_xu_x|^2\dd x\dd t
 \leq C\gamma^2\sup_{t\leq T}\norm{\theta_x}_{L^2}^2
                    \int_0^T\norm{u_x}_{L^\infty}^2\dd t
 \leq C\gamma^2M^2\leq C.
 \label{eq:mu-source}
\end{equation}
Then we use
\begin{align*}
 \int_0^T\!\int_\Omega u_t^2\dd x\dd t
 &\leq C\int_0^T\!\int_\Omega
 \{u_{xx}^2+u_x^2v_x^2+\theta_x^2+\theta^2v_x^2
 +|b|^2|b_x|^2+|\mu_xu_x|^2\}\dd x\dd t\leq C,
\end{align*}
where in the first inequality we used \eqref{eq:momexp} and
\eqref{eq:state}, while in the last inequality we used the preceding bounds for $\mu_xu_x$,
\eqref{eq:GN}, \eqref{eq:cap},
\eqref{eq:excurs}, \eqref{eq:b-inf},
\cref{l:vx}, and
\begin{align*}
 \int_0^T\norm{u_x(t)}_{L^\infty}^2\dd t
 &\leq C\left(\int_0^T\norm{u_x(t)}_{L^2}^2\dd t\right)^{\frac12}
 \left(\int_0^T\norm{u_{xx}(t)}_{L^2}^2\dd t\right)^{\frac12}\leq C,\\
 \int_0^T\norm{u_x(t)}_{L^\infty}^4\dd t
 &\leq C\sup_{0\leq t\leq T}\norm{u_x(t)}_{L^2}^2
 \int_0^T\norm{u_{xx}(t)}_{L^2}^2\dd t\leq C,\\
 \int_0^T\!\int_\Omega u_x^2v_x^2\dd x\dd t
 &\leq\sup_{0\leq t\leq T}\norm{v_x(t)}_{L^2}^2
 \int_0^T\norm{u_x(t)}_{L^\infty}^2\dd t\leq C,\\
 \int_0^T\!\int_\Omega\theta^2v_x^2\dd x\dd t
 &\leq C\int_0^T\{\norm{v_x(t)}_{L^2}^2
 +\norm{(\theta(t)-2)_+}_{L^\infty}^2\}\dd t\leq C.
\end{align*}
This proves \eqref{eq:long}.
The bounds for $\norm{u_x(t)}_{L^\infty}$, together with
\eqref{eq:tr-max}, also give \eqref{eq:gmax}.
\end{proof}

We now turn to the temperature estimates.

\begin{proposition}\label{p:thig}
For every fixed $\beta\geq0$,
\begin{align}
 &\sup_{(x,t)\in\Omega\times[0,T]}\theta(x,t)\leq C,
                                                               \label{eq:t-up}\\
 &\sup_{0\leq t\leq T}\int_\Omega\theta_x^2\dd x
 +\int_0^T\!\int_\Omega
 \{\theta_t^2+\theta_{xx}^2\}\dd x\dd t\leq C.
                                                               \label{eq:t-H2}
\end{align}
\end{proposition}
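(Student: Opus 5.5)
We prove \eqref{eq:t-up} and \eqref{eq:t-H2} together with a single energy estimate for the weighted gradient of the conductive flux. This follows Li--Xu \cite{LiXu2026}.

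Set $\mathcal Q=\frac{\theta^\beta\theta_x}{v}$, so that \eqref{eq:temp} reads
\[
\theta_t+\frac{\theta u_x}{v}=\mathcal Q_x+\frac{\mu u_x^2+|w_x|^2+|b_x|^2}{v}.
\]
Multiply \eqref{eq:temp} by $\theta^\beta\theta_t$ and integrate over $\Omega$. Integration by parts in the conduction term gives
\[
\frac{\dd}{\dd t}\frac12\int_\Omega\frac{\theta^{2\beta}\theta_x^2}{v}\dd x+\int_\Omega\theta^\beta\theta_t^2\dd x
=-\frac12\int_\Omega\frac{u_x\theta^{2\beta}\theta_x^2}{v^2}\dd x+\int_\Omega\theta^\beta\theta_t\left(\frac{\mu u_x^2+|w_x|^2+|b_x|^2-\theta u_x}{v}\right)\dd x.
\]
The boundary terms vanish because $\theta_x(0,t)=0$; the contributions at infinity are removed by cutoffs. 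Introduce
\[
X(t)=\int_\Omega\frac{\theta^{2\beta}\theta_x^2}{v}\dd x,\qquad Y(t)=\int_\Omega\theta^\beta\theta_t^2\dd x .
\]
All terms will be controlled through $\norm{\theta}_{L^\infty}$. By \eqref{eq:hot} and the fundamental theorem of calculus on $\{\theta>2\}$,
\[
\norm{(\theta-2)_+}_{L^\infty}^{\beta+1}\le C\int_{\{\theta>2\}}\theta^\beta|\theta_x|\dd x\le C\Bigl(\int_{\{\theta>2\}}\theta\dd x\Bigr)^{1/2}\Bigl(\int_\Omega\frac{\theta^{2\beta}\theta_x^2}{v\theta}\dd x\Bigr)^{1/2}.
\]
This gives $\norm\theta_{L^\infty}^{2\beta+2}\le C+C\norm\theta_{L^\infty}^{\beta}\int_\Omega\frac{\theta^\beta\theta_x^2}{v}\dd x$, and, in the same way, $\norm\theta_{L^\infty}^{2\beta+2}\le C+CX$ (roughly). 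The space-time quantity $\int_0^T\!\int_\Omega\theta^\beta\theta_x^2/\theta\,\dd x\dd t$ is finite by \eqref{eq:cappos} and \eqref{eq:prehi}; it supplies an integrable-in-time control of $\norm\theta_{L^\infty}$ and is the key analogue of \eqref{eq:mechwt}, \eqref{eq:Y-L1} mentioned in the introduction.

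Next, bound the right-hand side using the temperature equation to express $\theta_{xx}$:
\[
\norm{\mathcal Q_x}_{L^2}\le C\norm{\theta^{\beta/2}\theta_t}_{L^2}\norm\theta_{L^\infty}^{\beta/2}+C\norm\theta_{L^\infty}\norm{u_x}_{L^2}+C\bigl(\norm{u_x}_{L^\infty}+\norm{w_x}_{L^\infty}+\norm{b_x}_{L^\infty}\bigr)\bigl(\norm{u_x}_{L^2}+\norm{w_x}_{L^2}+\norm{b_x}_{L^2}\bigr).
\]
The cubic flux term is handled by
\[
\left|\int_\Omega\frac{u_x\theta^{2\beta}\theta_x^2}{v^2}\dd x\right|\le C\norm{u_x}_{L^2}\norm{\mathcal Q}_{L^4}^2\le C\norm{u_x}_{L^2}\norm{\mathcal Q}_{L^2}^{3/2}\norm{\mathcal Q_x}_{L^2}^{1/2}.
\]
The heating terms are estimated by $\varepsilon Y+C\norm\theta_{L^\infty}^\beta\bigl(\norm{u_x}_{L^\infty}^2+\norm{w_x}_{L^\infty}^2+\norm{b_x}_{L^\infty}^2\bigr)\bigl(\norm{u_x}_{L^2}^2+\norm{w_x}_{L^2}^2+\norm{b_x}_{L^2}^2\bigr)+C\norm\theta_{L^\infty}^{\beta+2}\norm{u_x}_{L^2}^2$.

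Combining these, and using \eqref{eq:gmax} and \eqref{eq:long} (sup of $\norm{u_x}_{L^2}$) together with the uniform $H^1$ bounds on $w,b$ from \eqref{eq:trflux}, we aim at the differential inequality
\[
X'+cY\le C\,g(t)\,(1+X)\log(e+X)\quad\text{or}\quad X'+cY\le Cg(t)(1+X),\qquad g\in L^1(0,\infty),
\]
with $g$ built from $\norm{u_x}_{L^\infty}^2$, $\norm{(w,b)_x}_{L^\infty}^2$, their squares from \eqref{eq:gmax}, $\norm{u_x}_{L^2}^2$, and $\int_\Omega\frac{\theta^{\beta-1}\theta_x^2}{v}\dd x$. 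Gronwall's inequality then bounds $\sup_t X$, and \eqref{eq:t-up} follows from the $L^\infty$ bound above. With $\theta$ now bounded above and below, $X\sim\norm{\theta_x}_{L^2}^2$, $Y\sim\norm{\theta_t}_{L^2}^2$, and $\theta_{xx}\in L^2_{t,x}$ follows from the equation, giving \eqref{eq:t-H2}. Nothing used depends on the bootstrap size $M$ except through $\gamma$-terms in $\mu_t$, which do not appear here.

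The main obstacle is the case of large $\beta$. There, the powers of $\norm\theta_{L^\infty}$ produced by the weights $\theta^\beta$ in the heating terms must not exceed what $X$ and the time-integrable quantity from \eqref{eq:cappos} can absorb, so that the inequality stays linear in $X$ with an integrable coefficient. If the direct weight $\theta^\beta\theta_t$ yields superlinear growth, the first alternative is to multiply instead by $\theta_t$ with the unweighted flux, or to first derive a time-uniform bound for $\int_\Omega(\theta-2)_+^{p}\dd x$ for large $p$ via \eqref{eq:therm}-type estimates. Such an $L^p$ bound would lower the needed powers of $\norm\theta_{L^\infty}$ by interpolation before running the Gronwall step.
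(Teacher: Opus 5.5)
Your outline follows the last part of the paper's argument: test \eqref{eq:temp} with $\theta^\beta\theta_t$, apply Gronwall to $X$, then use $\|(\theta^{\beta+1}-2^{\beta+1})_+\|_{L^\infty}\le CX^{1/2}$. However, the Gronwall step is not closed uniformly in time, and that is exactly where the difficulty lies. The flux term $-\frac12\int_\Omega u_x\theta^{2\beta}\theta_x^2v^{-2}\,\mathrm dx$ is naturally bounded by $C\|u_x\|_{L^\infty}X\le C(1+\|u_x\|_{L^\infty}^2)X$. The constant is unavoidable, because \eqref{eq:gmax} puts $\|u_x\|_{L^\infty}$ only in $L^2\cap L^4$ in time, not in $L^1(0,\infty)$.

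Your $L^4$-interpolation does not help. After Young's inequality against $Y$ it leaves $C\|u_x\|_{L^2}^{4/3}\|\theta\|_{L^\infty}^{\beta/3}X$. The coefficient $\|u_x\|_{L^2}^{4/3}$ is not in $L^1(0,\infty)$ in general, since only $\|u_x\|_{L^2}\in L^2\cap L^\infty$ is known. With your bound for $\|\mathcal Q_x\|_{L^2}$ and $\|\theta\|_{L^\infty}^{2\beta+2}\le C+CX$, the term is also superlinear in $X$ for $\beta>0$.

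So a term $CX$ with a constant coefficient survives, and a $T$-independent bound needs time-integrable control of $X$ itself. The quantity you put into $g$ for this, $\int_\Omega\theta^{\beta-1}\theta_x^2v^{-1}\,\mathrm dx$, is not shown to be integrable. The estimates \eqref{eq:cappos} and \eqref{eq:prehi} carry only the weight $\theta^{-1}$, and \eqref{eq:ent-id} only $\theta^{\beta-2}$. As written, Gronwall gives bounds that grow with $T$, and your last paragraph leaves the closing mechanism open.

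The paper supplies the missing ingredient with a preliminary high-temperature estimate:
\begin{itemize}
\item It tests \eqref{eq:temp} with $(\theta-2)_+^{\beta+1}$ and uses \eqref{eq:htmin} to dominate $\|(\theta-2)_+\|_{L^\infty}^{2\beta+3}$ by the dissipation $\int_{\{\theta>2\}}\theta^\beta(\theta-2)_+^\beta\theta_x^2\,\mathrm dx$.
\item It absorbs the heating and pressure terms by Young's inequality with exponent $\frac{2\beta+3}{\beta+3}\in(1,2)$, so the remainders are integrable by \eqref{eq:gmax} and \eqref{eq:excurs}.
\item This yields \eqref{eq:hot-Lp} and \eqref{eq:htgrad}, hence $\int_0^TX\,\mathrm dt\le C$ as in \eqref{eq:Y-L1}.
\item The flux term then costs only $C(1+\|u_x\|_{L^\infty}^2)X$, and the moment bound removes every power of $\|\theta\|_{L^\infty}$ from the pressure and heating terms; see \eqref{eq:t-ux} and \eqref{eq:mechwt}.
\end{itemize}
Your suggested $L^p$ moment bound points in this direction. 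Its essential output, though, is the weighted space--time gradient bound, not a reduction of powers of $\|\theta\|_{L^\infty}$.

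Alternatively, your linear scheme closes without new moments. Since $\|\theta\|_{L^\infty}^{\beta+1}\le C(1+X^{1/2})$ implies $\|\theta\|_{L^\infty}^{\beta+3/2}\le C(1+X)$, one has $X\le C\int_\Omega\frac{\theta_x^2}{v\theta}\,\mathrm dx+C(1+X)\int_{\{\theta>K\}}\frac{\theta^{\beta-3/2}\theta_x^2}{v}\,\mathrm dx$, with $K$ as in the proof of Proposition~\ref{p:cap}. The last integral is integrable in time, because the dissipation $g_\beta'(\theta)\theta^\beta\theta_x^2/v$ bounded in that proof controls it. This step, with its correct source, is what your argument is missing.
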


\begin{proof}
If $\beta=0$, \eqref{eq:hot-0} and
\eqref{eq:hot} give
\begin{align*}
 \sup_{0\leq t\leq T}\int_{\{\theta>2\}}(1+\theta^2)\dd x&\leq C,\quad\int_0^T\!\int_{\{\theta>4\}}\theta_x^2\dd x\dd t\leq C.
\end{align*}

Suppose now that $\beta>0$.
Multiplying \eqref{eq:temp} by $(\theta-2)_+^{\beta+1}$
(cf.\ \cite{LiXu2026}),
we use the chain rule and the uniform bounds for $v$ to obtain
\begin{align}
 &\frac1{\beta+2}\frac{\dd}{\dd t}
 \int_\Omega(\theta-2)_+^{\beta+2}\dd x
 +c(\beta+1)\int_{\{\theta>2\}}
 \theta^\beta(\theta-2)_+^\beta\theta_x^2\dd x\notag\\&\leq C\{\norm{u_x(t)}_{L^\infty}^2
 +\norm{w_x(t)}_{L^\infty}^2+\norm{b_x(t)}_{L^\infty}^2\}
 \norm{(\theta(t)-2)_+}_{L^\infty}^\beta\notag\\
 &\quad+C\norm{u_x(t)}_{L^\infty}
 \norm{(\theta(t)-2)_+}_{L^\infty}^{\beta+1}.
                                                        \label{eq:httest}
\end{align}
Here we used $0\leq(\theta-2)_+\leq\theta$ and
\eqref{eq:hot}.
Applying the fundamental theorem of calculus and the Cauchy--Schwarz
inequality, we obtain
\begin{align}
 \norm{(\theta(t)-2)_+}_{L^\infty}^{2\beta+3}
 &\leq C\left(\int_{\{\theta>2\}}
 \frac{\theta^\beta(\theta-2)_+^\beta\theta_x^2}{v}\dd x\right)
 \left(\int_{\{\theta>2\}}v(\theta-2)_+^{\beta+1}
 \theta^{-\beta}\dd x\right)
 \notag\\
 &\leq C\int_{\{\theta>2\}}
 \theta^\beta(\theta-2)_+^\beta\theta_x^2\dd x.
                                               \label{eq:htmin}
\end{align}
In the last inequality, we used
$(\theta-2)_+^{\beta+1}\theta^{-\beta}\leq(\theta-2)_+$,
the uniform bounds for $v$, and \eqref{eq:hot}.

By Young's inequality, for every $\varepsilon>0$,
\begin{align*}
 &\{\norm{u_x(t)}_{L^\infty}^2+\norm{w_x(t)}_{L^\infty}^2
 +\norm{b_x(t)}_{L^\infty}^2\}
 \norm{(\theta(t)-2)_+}_{L^\infty}^\beta
  \notag\\&\leq\varepsilon
 \norm{(\theta(t)-2)_+}_{L^\infty}^{2\beta+3}
 +C_\varepsilon\{\norm{u_x(t)}_{L^\infty}^2
 +\norm{w_x(t)}_{L^\infty}^2+\norm{b_x(t)}_{L^\infty}^2\}^{\frac{2\beta+3}{\beta+3}},\\
& \norm{u_x(t)}_{L^\infty}
 \norm{(\theta(t)-2)_+}_{L^\infty}^{\beta+1}
 \leq\varepsilon
 \norm{(\theta(t)-2)_+}_{L^\infty}^{2\beta+3}
 +C_\varepsilon\{\norm{u_x(t)}_{L^\infty}^2
 +\norm{(\theta(t)-2)_+}_{L^\infty}^2\}.
\end{align*}
For the second inequality, we distinguish between
$\norm{(\theta(t)-2)_+}_{L^\infty}\leq1$ and
$\norm{(\theta(t)-2)_+}_{L^\infty}>1$.
Since $1<\frac{2\beta+3}{\beta+3}<2$,
\eqref{eq:gmax} and \eqref{eq:excurs} give
\begin{align*}
 &\{\norm{u_x(t)}_{L^\infty}^2+\norm{w_x(t)}_{L^\infty}^2
 +\norm{b_x(t)}_{L^\infty}^2\}^{\frac{2\beta+3}{\beta+3}}
 \in L^1(0,T),\\
 &\norm{(\theta(t)-2)_+}_{L^\infty}^2\in L^1(0,T).
\end{align*}
Both $L^1$ norms are bounded independently of $T$.
Taking $\varepsilon$ sufficiently small in
\eqref{eq:httest} and using
\eqref{eq:htmin} and the initial data, we obtain
\begin{align}
 \sup_{0\leq t\leq T}\int_\Omega(\theta-2)_+^{\beta+2}\dd x+\int_0^T\!\int_{\{\theta>2\}}
 \theta^\beta(\theta-2)_+^\beta\theta_x^2\dd x\dd t\leq C.
                                                               \label{eq:htmom}
\end{align}
On $\{\theta>4\}$, we have
$\frac{\theta}{2}\leq(\theta-2)_+\leq\theta$.
Together with \eqref{eq:hot} and
\eqref{eq:htmom}, this yields
\begin{align}
 &\sup_{0\leq t\leq T}\int_{\{\theta>2\}}
 \{\theta^\beta+\theta^{\beta+2}\}\dd x\leq C,
                                                               \label{eq:hot-Lp}\\
 &\int_0^T\!\int_{\{\theta>4\}}
 \theta^{2\beta}\theta_x^2\dd x\dd t\leq C.      \label{eq:htgrad}
\end{align}
In view of \eqref{eq:hot-0} and \eqref{eq:hot}
for $\beta=0$, \eqref{eq:hot-Lp} and
\eqref{eq:htgrad} hold for every fixed $\beta\geq0$.

Set
\[
 Y_\beta(t)=\int_\Omega
 \frac{\theta^{2\beta}\theta_x^2}{v}\dd x.
\]
Multiply \eqref{eq:temp} by $\theta^\beta\theta_t$
(cf.\ \cite{CaoPengSun2021,TongWangZhang2026} and, for $\beta=0$,
\cite{LuShiXiong2021}).
Since
\[
 (\theta^\beta\theta_t)_x=(\theta^\beta\theta_x)_t,
 \qquad v_t=u_x,
\]
we obtain
\begin{align}
 \frac12Y_\beta'(t)+\int_\Omega\theta^\beta\theta_t^2\dd x=-\frac12\int_\Omega
 \frac{\theta^{2\beta}\theta_x^2u_x}{v^2}\dd x
 -\int_\Omega\frac{\theta^{\beta+1}u_x\theta_t}{v}\dd x+\int_\Omega\frac{\theta^\beta
 (\mu u_x^2+|w_x|^2+|b_x|^2)\theta_t}{v}\dd x.
                                                               \label{eq:flux-t}
\end{align}
Young's inequality, the uniform positive lower bounds for $v$ and
$\theta$, and
$\norm{u_x(t)}_{L^\infty}\leq1+\norm{u_x(t)}_{L^\infty}^2$ give
\begin{align}
 Y_\beta'(t)&+c\int_\Omega\theta^\beta\theta_t^2\dd x
 \leq C\norm{u_x(t)}_{L^\infty}^2Y_\beta(t)+CY_\beta(t)+C\int_\Omega\biggl\{
 \theta^{\beta+2}u_x^2
 +\theta^\beta(\mu u_x^2+|w_x|^2+|b_x|^2)^2\biggr\}\dd x.
                                                               \label{eq:Y}
\end{align}

By \eqref{eq:hot-Lp}, for every fixed $\beta\geq0$,
\[
 \int_\Omega\theta^{\beta+2}u_x^2\dd x
 \leq C\norm{u_x(t)}_{L^2}^2+C\norm{u_x(t)}_{L^\infty}^2.
\]
Together with \eqref{eq:prehi} and
\eqref{eq:gmax}, this gives
\begin{equation}
 \int_0^T\!\int_\Omega
 \theta^{\beta+2}u_x^2\dd x\dd t\leq C.
                                                               \label{eq:t-ux}
\end{equation}

Using $\theta^\beta\leq C$ on $\{\theta\leq2\}$,
\eqref{eq:hot-Lp} on $\{\theta>2\}$, and
\cref{eq:trflux,eq:long}, we obtain
\begin{align}
 &\int_\Omega\theta^\beta
 (\mu u_x^2+|w_x|^2+|b_x|^2)^2\,\dd x\notag\\
 &\quad\leq C\{\norm{u_x(t)}_{L^\infty}^2+\norm{w_x(t)}_{L^\infty}^2
 +\norm{b_x(t)}_{L^\infty}^2\}
 \int_\Omega(\mu u_x^2+|w_x|^2+|b_x|^2)\,\dd x\notag\\
 &\qquad+C\{\norm{u_x(t)}_{L^\infty}^2+\norm{w_x(t)}_{L^\infty}^2
 +\norm{b_x(t)}_{L^\infty}^2\}^2
 \int_{\{\theta>2\}}\theta^\beta\,\dd x\notag\\
 &\quad\leq C\{\norm{u_x(t)}_{L^\infty}^2+\norm{w_x(t)}_{L^\infty}^2
 +\norm{b_x(t)}_{L^\infty}^2\}+C\{\norm{u_x(t)}_{L^\infty}^2+\norm{w_x(t)}_{L^\infty}^2
 +\norm{b_x(t)}_{L^\infty}^2\}^2.
                                                        \label{eq:mechwt}
\end{align}
By \eqref{eq:gmax}, the right-hand side is integrable
in time.

We also need to show that $Y_\beta$ is integrable in time.
On $\{\theta\leq4\}$,
$\theta^{2\beta}\leq C_\beta\theta^{-1}$, while on
$\{\theta>4\}$ we use \eqref{eq:htgrad}.
The uniform upper and lower bounds for $v$ and
\eqref{eq:prehi} therefore give
\begin{align*}
 \int_0^T Y_\beta(t)\dd t
 &\leq C_\beta\int_0^T\!\int_{\{\theta\leq4\}}
 \frac{\theta_x^2}{v\theta}\dd x\dd t
 +C\int_0^T\!\int_{\{\theta>4\}}\theta^{2\beta}\theta_x^2\dd x\dd t
 \leq C.
\end{align*}
Hence
\begin{equation}
 \int_0^T Y_\beta(t)\dd t\leq C.              \label{eq:Y-L1}
\end{equation}

Applying Gronwall's inequality to \eqref{eq:Y} and using
\cref{eq:init,eq:Y-L1,eq:t-ux,%
eq:mechwt,eq:gmax}, we obtain
\begin{equation}
 \sup_{0\leq t\leq T}Y_\beta(t)
 +\int_0^T\!\int_\Omega\theta^\beta\theta_t^2\dd x\dd t\leq C.
                                                               \label{eq:t-wt}
\end{equation}

We now prove the upper bound for the temperature.
For every fixed $\beta\geq0$, the fundamental theorem of calculus
and the Cauchy--Schwarz inequality give
\begin{align*}
 \norm{\bigl(\theta(t)^{\beta+1}-2^{\beta+1}\bigr)_+}_{L^\infty}
 &\leq(\beta+1)
 \left(\int_{\{\theta>2\}}
 \frac{\theta^{2\beta}\theta_x^2}{v}\dd x\right)^{\frac12}
 \left(\int_{\{\theta>2\}}v\dd x\right)^{\frac12}\leq C.
\end{align*}
Here we used \eqref{eq:t-wt}, the uniform upper bound
for $v$, and \eqref{eq:hot}.
This proves \eqref{eq:t-up}.
Combining \eqref{eq:t-wt} with the uniform positive
lower bound for $\theta$ and the uniform upper and lower bounds
for $v$, we obtain
\begin{equation}
 \sup_{0\leq t\leq T}\int_\Omega\theta_x^2\dd x\leq C,
 \qquad
 \int_0^T\!\int_\Omega\theta_t^2\dd x\dd t\leq C.
                                                               \label{eq:t-time}
\end{equation}

Finally, we expand \eqref{eq:temp} as
\begin{align}
 \theta_{xx}={}&v\theta^{-\beta}\theta_t
 +\theta^{1-\beta}u_x
 -\theta^{-\beta}(\mu u_x^2+|w_x|^2+|b_x|^2)-\beta\theta^{-1}\theta_x^2
 +\frac{v_x}{v}\theta_x.                            \label{eq:t-xx}
\end{align}
The uniform bounds for $v$ and $\theta$ and
\eqref{eq:t-xx} imply
\begin{align}
 \norm{\theta_{xx}(t)}_{L^2}^2
 &\leq C\int_\Omega\bigl\{\theta_t^2+u_x^2
 +(\mu u_x^2+|w_x|^2+|b_x|^2)^2+\theta_x^4+v_x^2\theta_x^2\bigr\}\dd x.
                                                        \label{eq:txxbd}
\end{align}
Moreover, the one-dimensional interpolation inequality and Young's
inequality give, for every $\varepsilon>0$,
\begin{align}
 \int_0^T\!\int_\Omega(\theta_x^4+v_x^2\theta_x^2)\dd x\dd t&\leq\varepsilon\int_0^T\norm{\theta_{xx}(t)}_{L^2}^2\dd t
 +C_\varepsilon\int_0^T\bigl\{\norm{\theta_x(t)}_{L^2}^6
 +\norm{v_x(t)}_{L^2}^4\norm{\theta_x(t)}_{L^2}^2\bigr\}\dd t\notag\\
 &\quad\leq\varepsilon\int_0^T\norm{\theta_{xx}(t)}_{L^2}^2\dd t+C_\varepsilon,
                                                        \label{eq:txxrem}
\end{align}
where the last inequality follows from \eqref{eq:long},
\eqref{eq:t-time}, and \cref{l:vx}. Combining
\cref{eq:txxbd,eq:txxrem,eq:long,%
eq:t-time,eq:mechwt,eq:gmax},
and choosing $\varepsilon$ sufficiently small, we obtain \eqref{eq:t-H2}.
\end{proof}

\section{Large-time behavior and proof of the main theorem}\label{sec:full}
\begin{proposition}\label{p:full}
For every fixed $\beta\geq0$,
\begin{align}
 \sup_{0\leq t\leq T}\bigl\{
 \norm{v(t)-1}_{H^1(\Omega)}^2+\norm{u(t)}_{H^1(\Omega)}^2
 +\norm{\theta(t)-1}_{H^1(\Omega)}^2+\norm{w(t)}_{H^1(\Omega)}^2+\norm{b(t)}_{H^1(\Omega)}^2\bigr\}\leq C,
                                                               \label{eq:fullH1}
\end{align}
and
\begin{align}
 \int_0^T\!\int_\Omega\bigl\{&
 v_x^2+\mu u_x^2+|w_x|^2+|b_x|^2+\theta_x^2\notag\\
 &+u_{xx}^2+|w_{xx}|^2+|b_{xx}|^2+\theta_{xx}^2\notag\\
 &+u_t^2+|w_t|^2+|b_t|^2+\theta_t^2\bigr\}
 \dd x\dd t\leq C.                                    \label{eq:fullds}
\end{align}
\end{proposition}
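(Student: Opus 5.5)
Proposition~\ref{p:full} is essentially an assembly of the bounds already obtained in Section~\ref{sec:high}. The only genuinely new items are the sup-in-time $L^2$ bounds for $v-1$ and $\theta-1$, and the second-derivative bounds for $w$ and $b$. We take the results term by term.

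\textbf{$H^1$ bounds.} For the $L^2$ parts we use \eqref{eq:ent-id} together with \eqref{eq:Phi}. By \eqref{eq:state} and \eqref{eq:t-up}, $v$ and $\theta$ range over a fixed compact subinterval of $(0,\infty)$, so there
\[
 (v-1)^2\leq C\Phi(v),\qquad (\theta-1)^2\leq C\Phi(\theta).
\]
This bounds $\norm{v-1}_{L^2}^2+\norm{\theta-1}_{L^2}^2$ by $CE_0$. The $L^2$ norms of $u,w,b$ come from \eqref{eq:cap}. For the first derivatives, $\sup_t\norm{v_x}_{L^2}^2$ is \eqref{eq:vx}, $\sup_t\norm{u_x}_{L^2}^2$ is \eqref{eq:long}, $\sup_t\norm{(w_x,b_x)}_{L^2}^2$ is \eqref{eq:trflux}, and $\sup_t\norm{\theta_x}_{L^2}^2$ is \eqref{eq:t-H2}. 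Together these give \eqref{eq:fullH1}.

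\textbf{Dissipation bounds.} Most terms in \eqref{eq:fullds} are already available:
\begin{itemize}
 \item $\int_0^T\!\int_\Omega v_x^2$ is in \eqref{eq:vx};
 \item $\int_0^T\!\int_\Omega\{\mu u_x^2+|w_x|^2+|b_x|^2\}$ is in \eqref{eq:cap};
 \item $\int_0^T\!\int_\Omega\{\theta_x^2+u_{xx}^2+u_t^2\}$ is in \eqref{eq:long};
 \item $\int_0^T\!\int_\Omega\{|w_t|^2+|b_t|^2\}$ is in \eqref{eq:trflux};
 \item $\int_0^T\!\int_\Omega\{\theta_t^2+\theta_{xx}^2\}$ is in \eqref{eq:t-H2}.
\end{itemize}
The remaining terms are $w_{xx}$ and $b_{xx}$. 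Expand
\[
 \left(\frac{w_x}{v}\right)_x=\frac{w_{xx}}{v}-\frac{v_xw_x}{v^2},
\]
so that
\[
 |w_{xx}|^2\leq C\left|\left(\frac{w_x}{v}\right)_x\right|^2+C|v_x|^2|w_x|^2,
\]
and similarly for $b$. The first term is bounded by \eqref{eq:trflux}. For the second,
\[
 \int_0^T\!\int_\Omega|v_x|^2|w_x|^2\dd x\dd t
 \leq\sup_t\norm{v_x}_{L^2}^2\int_0^T\norm{w_x}_{L^\infty}^2\dd t\leq C,
\]
by \eqref{eq:vx} and \eqref{eq:tr-max}; the same argument applies to $b_x$. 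This completes \eqref{eq:fullds}.

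\textbf{Main obstacle.} There is no real difficulty; the one point to watch is the uniformity of the constants. In case $\eqref{eq:viscosity}_2$, every bound used above must have constants independent of $M$ and $\gamma$ under \eqref{eq:small-gamma}. Where an earlier bound involved $\varepsilon_\gamma$ or $C\gamma^2M^2$ (for example \eqref{eq:mu-source}), we check that those constants reduce to data-dependent constants, so that \eqref{eq:fullH1}--\eqref{eq:fullds} can later close the bootstrap \eqref{eq:boot}.
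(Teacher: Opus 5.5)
Your proposal is correct and follows the paper's proof essentially step by step. The $H^1$ bound is assembled from \eqref{eq:ent-id} and the earlier derivative estimates. The only new ingredient, the bound on $w_{xx}$ and $b_{xx}$, comes from writing $w_{xx}=v\left(\frac{w_x}{v}\right)_x+\frac{v_xw_x}{v}$ and using \eqref{eq:trflux}, \eqref{eq:vx}, and \eqref{eq:tr-max}, exactly as the paper does.
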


\begin{proof}
Using
\cref{eq:state,l:vx,%
p:long,p:thig} and
\cref{eq:ent-id,eq:trflux}, we obtain
\eqref{eq:fullH1}.

It remains to estimate the second spatial derivatives of $w$ and $b$.
Since
\begin{align*}
 w_{xx}=v\left(\frac{w_x}{v}\right)_x
 +v_x\frac{w_x}{v},\quad
 b_{xx}=v\left(\frac{b_x}{v}\right)_x
 +v_x\frac{b_x}{v},
\end{align*}
\cref{l:vx} and
\cref{eq:state,eq:trflux,eq:tr-max}
give
\begin{align*}
 \int_0^T\!\int_\Omega(|w_{xx}|^2+|b_{xx}|^2)\dd x\dd t&\leq C\int_0^T\!\int_\Omega
 \left\{\left|\left(\frac{w_x}{v}\right)_x\right|^2
 +\left|\left(\frac{b_x}{v}\right)_x\right|^2\right\}\dd x\dd t\\&+C\sup_{0\leq t\leq T}\norm{v_x(t)}_{L^2}^2
 \int_0^T\{\norm{w_x(t)}_{L^\infty}^2
 +\norm{b_x(t)}_{L^\infty}^2\}\dd t\leq C.
\end{align*}
Combining this estimate with
\cref{l:vx,p:cap,p:long,%
p:thig} and \eqref{eq:trflux} gives
\eqref{eq:fullds}.
\end{proof}
In case $\eqref{eq:viscosity}_1$, the bounds in Proposition~\ref{p:full} and
\eqref{eq:mainvt}, as established in Propositions~\ref{p:vlow},
\ref{p:tlow}, \ref{p:v-up}, and~\ref{p:thig}, are independent
of $T<T_*$. If $T_*<\infty$, then $v_t=u_x\in L^2(0,T_*;H^1)$
gives a limit for $v-1$ in $H^1$ at $T_*$. For the other variables,
the $L^2$ bounds for their time derivatives and second spatial
derivatives give continuity into $H^1$ up to $T_*$, with the
prescribed Dirichlet traces. The uniform positive lower bounds
pass to these terminal data. Lemma~\ref{lem:01a} therefore extends
the solution beyond $T_*$, a contradiction. Hence $T_*=\infty$.
Letting $T\to\infty$ in the preceding estimates gives their
global-in-time versions, which we use below.

In case $\eqref{eq:viscosity}_2$, we remove the temporary assumptions \eqref{eq:boot}.
The preceding estimates give constants $c_*,C_*>0$, independent of
$M,T,\gamma$, such that
\[
 c_*\leq v,\theta\leq C_*,\qquad \mathcal N_T\leq C_*.
\]
First fix a common $\varepsilon_*>0$ for the absorptions above.
Then choose $M_*>2$ so large that
\[
 c_*\geq\frac4{M_*},\quad C_*\leq \frac{M_*}{4},\quad
 \mathcal N_0\leq \frac{M_*}{4},
\]
and the initial values of $v$ and $\theta$ lie in $[\frac4{M_*},\frac{M_*}{4}]$.
Finally, set
\begin{equation}
 \gamma_0=\min\left\{1,\frac{\log2}{\log M_*},
                       \frac{\varepsilon_*}{(1+M_*)^8}\right\}>0.
 \label{eq:gamma-choice}
\end{equation}
For $0\leq\gamma\leq\gamma_0$, Lemma~\ref{lem:01a} and continuity
start \eqref{eq:boot} with strict inequalities and $M=M_*$.
On every interval where these hold, the preceding estimates improve
them to $\frac4{M_*}\leq v,\theta\leq \frac{M_*}{4}$ and
$\mathcal N_T\leq \frac{M_*}{4}$. They therefore cannot fail at a finite first
exit time. The continuation criterion in Lemma~\ref{lem:01a} excludes
a finite maximal existence time. The solution is consequently global,
and letting $T\to\infty$ gives \eqref{eq:fullH1} and
\eqref{eq:fullds} with the supremum and time integrals taken on
$[0,\infty)$, uniformly for $\gamma\in[0,\gamma_0]$.

\begin{proposition}\label{p:dec}
As $t\to\infty$,
\begin{equation}
 \norm{v_x(t)}_{L^2}+\norm{u_x(t)}_{L^2}+\norm{\theta_x(t)}_{L^2}
 +\norm{w_x(t)}_{L^2}+\norm{b_x(t)}_{L^2}\longrightarrow0.    \label{eq:grdec}
\end{equation}
Moreover, for every $2<p\leq\infty$,
\begin{align}
 \norm{v(t)-1}_{L^p(\Omega)}+\norm{u(t)}_{L^p(\Omega)}
 +\norm{\theta(t)-1}_{L^p(\Omega)}+\norm{w(t)}_{L^p(\Omega)}+\norm{b(t)}_{L^p(\Omega)}
 \longrightarrow 0.                                  \label{eq:Lp-dec}
\end{align}
 
\end{proposition}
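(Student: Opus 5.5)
The plan is the standard argument: if $f\geq0$ belongs to $L^1(0,\infty)$ and $f'\in L^1(0,\infty)$, then $f(t)\to0$ as $t\to\infty$. We apply this with
\[
 f(t)=\norm{v_x(t)}_{L^2}^2+\norm{u_x(t)}_{L^2}^2+\norm{\theta_x(t)}_{L^2}^2+\norm{w_x(t)}_{L^2}^2+\norm{b_x(t)}_{L^2}^2 .
\]
Integrability of $f$ on $(0,\infty)$ follows directly from \eqref{eq:fullds}. For $f'$, we estimate each term separately.

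For $u_x,\theta_x,w_x,b_x$, write, for instance,
\[
 \frac{\dd}{\dd t}\norm{u_x}_{L^2}^2=-2\int_\Omega u_tu_{xx}\dd x .
\]
The boundary terms vanish by \eqref{eq:bc-C}--\eqref{eq:bc-D}: $u_t=w_t=0$ at $x=0$, and either $b_t=0$ or $b_x=0$, and $\theta_x=0$ there. Hence
\[
 \Bigl|\frac{\dd}{\dd t}\norm{(u_x,\theta_x,w_x,b_x)}_{L^2}^2\Bigr|\leq\norm{(u_t,\theta_t,w_t,b_t)}_{L^2}^2+\norm{(u_{xx},\theta_{xx},w_{xx},b_{xx})}_{L^2}^2,
\]
and the right-hand side is integrable in time by \eqref{eq:fullds}. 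The derivative identity should be justified at the regularity \eqref{eq:finite}, using the global versions of the estimates.

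The term $\norm{v_x}_{L^2}^2$ is the main point, since $v$ satisfies no parabolic equation. We have
\[
 \frac{\dd}{\dd t}\norm{v_x}_{L^2}^2=2\int_\Omega v_xu_{xx}\dd x ,
\]
whose absolute value is bounded by $\norm{v_x}_{L^2}^2+\norm{u_{xx}}_{L^2}^2$. Both terms are integrable in time by \eqref{eq:fullds}, so the argument closes. This proves \eqref{eq:grdec}.

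For \eqref{eq:Lp-dec}, we use \eqref{eq:GN}. Each of $v-1,u,\theta-1,w,b$ is bounded in $L^2$ uniformly in time, by \eqref{eq:fullH1}. Therefore
\[
 \norm{f}_{L^p}\leq C\norm f_{L^2}^{\frac12+\frac1p}\norm{f_x}_{L^2}^{\frac12-\frac1p}\to0
\]
for $2<p\leq\infty$, because the exponent $\frac12-\frac1p$ is positive and $\norm{f_x}_{L^2}\to0$. Note that $L^2$ decay itself is not claimed, which is consistent with the energy being merely bounded.

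The only delicate step is the rigorous justification of the time derivatives in the $W^{1,1}$ sense, which is routine given \eqref{eq:finite}.
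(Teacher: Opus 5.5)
Your proposal is correct and follows the paper's proof essentially step by step. You get integrability of the squared derivative norms from \eqref{eq:fullds}, use the identities $\frac{\dd}{\dd t}\norm{v_x}_{L^2}^2=2\int_\Omega v_xu_{xx}\dd x$ and $\frac{\dd}{\dd t}\norm{f_x}_{L^2}^2=-2\int_\Omega f_{xx}f_t\dd x$ to place these norms in $W^{1,1}(0,\infty)$, and then apply \eqref{eq:GN} with the uniform $L^2$ bound to obtain the $L^p$ decay; the derivative identities you call routine are justified in the paper by approximation in Appendix~\ref{sec:local}, without needing a boundary trace of $f_t$.
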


\begin{proof}
By \eqref{eq:fullds}, the squares of all five $L^2$ norms
in \eqref{eq:grdec} belong to $L^1(0,\infty)$.
We estimate their time derivatives as in \cite{LiLiang2016}.
For $v$, the identity $v_{xt}=u_{xx}$ gives
\[
 \frac{\dd}{\dd t}\int_\Omega v_x^2\dd x
 =2\int_\Omega v_xu_{xx}\dd x.
\]
The right-hand side belongs to $L^1(0,\infty)$ by the
Cauchy--Schwarz inequality and \eqref{eq:fullds}.

Let $f$ be any scalar component of $u$, $w$, $b$, or $\theta-1$.
Integration by parts and the corresponding Dirichlet or Neumann
boundary condition give, for almost every $t$,
\[
 \frac{\dd}{\dd t}\int_\Omega f_x^2\dd x
 =-2\int_\Omega f_{xx}f_t\dd x.
\]
Again, the right-hand side belongs to $L^1(0,\infty)$ by
\eqref{eq:fullds}.
Thus, the squares of all five derivative norms belong to
$L^1(0,\infty)\cap W^{1,1}(0,\infty)$.
Each therefore has a limit as $t\to\infty$, and its integrability
forces that limit to be zero.
This proves \eqref{eq:grdec}.

Let $g$ be any scalar component of $v-1,u,\theta-1,w,b$.
By \eqref{eq:fullH1}, $\norm{g(t)}_{L^2}$ is uniformly bounded,
while \eqref{eq:grdec} gives
$\norm{g_x(t)}_{L^2}\to0$.
The one-dimensional Gagliardo--Nirenberg inequality
\eqref{eq:GN} yields
\[
 \norm{g(t)}_{L^p(\Omega)}
 \leq C_p\norm{g(t)}_{L^2}^{\frac12+\frac1p}
 \norm{g_x(t)}_{L^2}^{\frac12-\frac1p}\longrightarrow0
 \qquad(2<p\leq\infty).
\]
Summing over all scalar components proves \eqref{eq:Lp-dec}.

\end{proof}

\begin{proof}[Proof of \Cref{thm:main}]
Global existence in both cases was proved after Proposition~\ref{p:full}, and
uniqueness follows from Lemma~\ref{lem:01a}. The uniform upper
and lower bounds for $v$ and $\theta$ follow from
\cref{eq:state,%
p:thig}.
The estimates \eqref{eq:mainH1} and \eqref{eq:mainds} follow from
\cref{p:full}, and the decay statement 
\eqref{eq:maingr}  follows from
\cref{p:dec}.
\end{proof}

\appendix
\section{Local existence and justification of the energy estimates}
\label{sec:local}
\begin{proof}[Proof of Lemma~\ref{lem:01a}]
We give the construction for the normalized coefficients used in the
proof. The general positive coefficients are handled in the same way.
Write $Z=(u,w,b,\theta-1)$. On a fixed positive state interval
$m\leq v,\theta\leq M_1$, the equations have the form
\begin{equation}
 Z_t=A(v,\theta)Z_{xx}+F(v,\theta,v_x,Z,Z_x),
 \qquad
 A=\operatorname{diag}\left(\frac\mu v,\frac1v,
                           \frac1{v^2},\frac{\theta^\beta}{v}\right),
 \label{eq:local-system}
\end{equation}
where the entries for $w,b$ are repeated componentwise and
\begin{align*}
 F_u&=\frac{\mu_xu_x}{v}
       -\frac{\mu v_xu_x}{v^2}
       +\frac{\theta v_x}{v^2}-\frac{\theta_x}{v}-b\cdot b_x,\\
 F_w&=-\frac{v_xw_x}{v^2}+b_x,\\
 F_b&=-\frac{v_xb_x}{v^3}+\frac{w_x}{v}-\frac{u_xb}{v},\\
 F_\theta&=\frac{\beta\theta^{\beta-1}\theta_x^2}{v}
       -\frac{v_x\theta^\beta\theta_x}{v^2}
       +\frac{\mu u_x^2+|w_x|^2+|b_x|^2-\theta u_x}{v}.
\end{align*}
The matrix $A$ is uniformly positive, and its coefficients and their
state derivatives are bounded for each fixed $\alpha,\beta$
in $\eqref{eq:viscosity}_1$, and uniformly for $\gamma\in[0,1]$
in $\eqref{eq:viscosity}_2$, with $\beta$ fixed.
Here $\mu_x=\mu'(v)v_x$ in the first case and
$\mu_x=\mu_\theta\theta_x$ in the second.
For $g\in H^1(\Omega)$ and $f\in H^2(\Omega)$, the interpolation
inequality gives
\begin{equation}
 \norm{g_xf_x}_{L^2}^2
 \leq\varepsilon\norm{f_{xx}}_{L^2}^2
   +C_\varepsilon(1+\norm{g_x}_{L^2}^4)\norm{f_x}_{L^2}^2.
 \label{eq:local-product}
\end{equation}
This applies to each product of first derivatives in $F$, including
$\theta_x^2$, and does not require $v_{xx}$.
Set
\[
 H(t)=1+\norm{v-1}_{H^1}^2+\norm Z_{H^1}^2.
\]
Testing \eqref{eq:local-system} by $Z$ and $-Z_{xx}$, and using
$v_t=u_x$, we obtain, as long as the state remains in the prescribed
interval,
\begin{equation}
 H'(t)+c\norm{Z_{xx}}_{L^2}^2\leq C(1+H(t)^3),
 \qquad
 \norm{Z_t}_{L^2}^2\leq C\norm{Z_{xx}}_{L^2}^2+C(1+H(t)^3).
 \label{eq:local-est}
\end{equation}
For smooth functions, the boundary term in the derivative test is
zero: the Dirichlet value is constant in time, and the Neumann
derivative is zero.

Take smooth approximations of the initial data on
bounded intervals exhausting $\Omega$, with the far-field state at
the artificial endpoints. Use the eigenfunctions of the Laplacian
with the prescribed Dirichlet or Neumann condition for each component
of $Z$, and define the approximate specific volume by
$v^n=v_0^n+\int_0^t u_x^n\dd s$. Projection of
\eqref{eq:local-system} gives a finite system of ordinary differential
equations. The derivative test is admissible because $-Z_{xx}^n$
belongs to the same finite-dimensional space. Estimate
\eqref{eq:local-est} is uniform in the number of basis functions and
in the length of the interval. It gives a common short time on which
$H$ and the time integrals of $\norm{Z_{xx}}_{L^2}^2$ and
$\norm{Z_t}_{L^2}^2$ remain bounded. Moreover,
\[
 \norm{v(t)-v_0}_{H^1}\leq
 t^{\frac{1}{2}}\left(\int_0^t\norm{u_x}_{H^1}^2\dd s\right)^{\frac{1}{2}},
 \qquad
 \norm{Z(t)-Z_0}_{L^\infty}\leq Ct^{\frac{1}{4}},
\]
where the second inequality follows by interpolating the $L^2$ time
modulus with the bounded $H^1$ norm. These estimates keep $v,\theta$
inside a fixed positive state interval on a common time $[0,T_0]$.

Compactness on bounded space intervals gives strong convergence of
$Z^n$ in $L^2(0,T_0;H^1_{\rm loc})$ and locally uniform convergence
of the states. The derivatives $v_x^n$ converge weakly in $L^2$;
thus their products with $Z_x^n$ converge in distributions, while
\eqref{eq:local-product} bounds these products in $L^2$.
First letting the Galerkin dimension tend to infinity and then
exhausting $\Omega$ yields a solution with all the global norms in
\eqref{eq:finite}. The equations give $Z_t\in L^2(0,T_0;L^2)$.
The interpolation identity for the Dirichlet or Neumann Laplacian
therefore gives $Z\in C([0,T_0];H^1)$; its Neumann form domain is
$H^1$, so no initial derivative trace is imposed.
Also $v_t=u_x\in L^2(0,T_0;H^1)$ implies $v-1\in C([0,T_0];H^1)$.

To prove uniqueness, subtract the equations for two solutions in the
same positive state interval and apply \eqref{eq:local-product}.
With
$E=\norm{v^{(1)}-v^{(2)}}_{H^1}^2+
\norm{Z^{(1)}-Z^{(2)}}_{H^1}^2$, the same tests give
\begin{equation}
 E'(t)+c\norm{Z_{xx}^{(1)}-Z_{xx}^{(2)}}_{L^2}^2
 \leq C\left(1+\sum_{i=1}^2\norm{Z_{xx}^{(i)}(t)}_{L^2}^2\right)E(t),
 \label{eq:local-difference}
\end{equation}
where $C$ may depend on the common $H^1$ and state bounds.
The coefficient multiplying $E$ is integrable in time.
For example, the difference of the principal coefficients is bounded
in $L^\infty$ by $CE^{\frac{1}{2}}$; its product with $Z_{xx}^{(2)}$ is
controlled by the right side of \eqref{eq:local-difference}.
The remaining products are treated by expanding one factor at a time
and using \eqref{eq:local-product} and the bounded $H^1$ norms.
Gronwall's inequality proves uniqueness and continuous dependence.
Restarting the construction proves the continuation criterion, since
all constants above depend only on the stated norms and positive
bounds, for fixed $\alpha,\beta$ in $\eqref{eq:viscosity}_1$
and uniformly for $\gamma\in[0,1]$ in $\eqref{eq:viscosity}_2$.
\end{proof}

The energy identities above are justified by approximation in the
class \eqref{eq:finite}. Smooth data with homogeneous Dirichlet values
or with constant values near a Neumann endpoint are dense in the
corresponding $H^1$ space; truncation, a shrinking endpoint collar,
and mollification preserve the positive state bounds. Estimate
\eqref{eq:local-difference} gives convergence of their solutions in
$C_tH^1$ and in $L^2_tH^2$ for the diffusive variables, and the
equations give convergence of their time derivatives in $L^2$.
For a component $f$ of $u,w,b,\theta-1$, time convolution preserving
the homogeneous boundary condition consequently gives
\[
 \frac{\dd}{\dd t}\int_\Omega\frac{|f_x|^2}{v}\dd x
 =-2\int_\Omega\left(\frac{f_x}{v}\right)_xf_t\dd x
   -\int_\Omega\frac{u_x|f_x|^2}{v^2}\dd x.
\]
Indeed, $\left(\frac{f_x}{v}\right)_x\in L^2_{t,x}$ follows from
\eqref{eq:local-product}, while $u_x\in L^2_tL^\infty_x$.
Thus the formula does not require a boundary trace of $f_t$.
The same argument gives the identity for $\norm{f_x}_{L^2}^2$ used
in Proposition~\ref{p:dec}. For the heat time-derivative test, we apply the same argument to
$\frac{\theta^{\beta+1}-1}{\beta+1}$, which belongs to the
same strong class on every finite positive state interval.
Lipschitz approximations of the positive part justify the temperature
truncations, and spatial cutoffs remove the boundary at infinity.
The half-line boundary fluxes vanish by $u=w=0$, $\theta_x=0$,
and either $b=0$ or $b_x=0$. The energy identities leading to \eqref{eq:lj09} and
\eqref{eq:lj09-T} follow by the same approximation. In case
$\eqref{eq:viscosity}_1$, $q_t=\left(\frac{\mu u_x}{v}\right)_x\in L^2_{t,x}$
and $u\in C_tH^1_x$ on every finite interval. In case
$\eqref{eq:viscosity}_2$, $q_t=\left(\frac{u_x}{v}\right)_x\in L^2_{t,x}$
and $\frac u\mu\in C_tH^1_x$. The stresses in
\eqref{eq:sigma} and \eqref{eq:sigma-T} satisfy
$\sigma+1\in L^2(0,T;H^1(\Omega))$ in their respective cases.
Their endpoint traces and the differentiations and time integrals
in Lemmas~\ref{l:Kaz} and~\ref{l:Kaz-T} are therefore well defined.

\end{document}